\tikzset{Bullet/.style={fill=black,draw,color=#1,circle,minimum size=20pt,scale=.5}}
\newtheorem{theorem}{Theorem}
\newtheorem{corollary}[theorem]{Corollary}
\newtheorem{lemma}[theorem]{Lemma}
\newtheorem{conjecture}[theorem]{Conjecture}
\newtheorem{proposition}[theorem]{Proposition}
\newtheorem{claim}{Claim}
\title{{Eigenvalue bounds for distance-edge colourings}}
\author{Aida Abiad\thanks{\texttt{a.abiad.monge@tue.nl}, Department of Mathematics and Computer Science, Eindhoven University of Technology, The Netherlands} 
\thanks{Department of Mathematics and Data Science, Vrije Universiteit Brussel, Belgium}
\and
Harper Reijnders\thanks{\texttt{l.e.r.m.reijnders@tue.nl}, Department of Mathematics and Computer Science, Eindhoven University of Technology, The Netherlands}}
\date{}
\begin{document}

\maketitle

\begin{abstract}
For a fixed positive integer $t$, we consider the graph colouring problem in which edges at distance at most $t$ are given distinct colours. We obtain sharp lower bounds for the distance-$t$ chromatic index, the least number of colours necessary for such a colouring. Our bounds are of algebraic nature; they depend on the eigenvalues of the line graph and on a polynomial which can be found using integer linear programming methods. We provide several graph classes that attain equality for our bounds, and also present some computational results which illustrate the bound's performance. Lastly, we investigate the implications the spectral approach has for the Erdős-Nešetřil conjecture, and derive some conditions which a graph must satisfy if we could use it to obtain a counter example through the proposed spectral methods.\\

\noindent\textbf{Keywords:} Graph colouring; Strong chromatic index; Distance edge-colouring, Eigenvalues
\end{abstract}


\section{Introduction}

Given a graph $G = (V, E)$ and a positive integer $t$, a \emph{distance-$t$ edge-colouring} of $G$ is a colouring of the edges of $G$ such that no two edges within distance $t$ are given the same colour. Here, the \emph{distance between two edges} is defined as the length of the shortest path which contains a vertex from both edges. Incident edges have distance $1$ and thus a distance-$1$ edge-colouring is just a proper edge-colouring. The \emph{distance-$t$ chromatic index} of a graph $G$, denoted by $\chi_t'(G)$, is the smallest number of colours required to colour all the edges of $G$, such that no two edges within distance $t$ share a colour. The \emph{distance-$t$ chromatic number} of graph, denoted by $\chi_t(G)$, is the smallest number of colours required to colour all the vertices of $G$, such that no two vertices within distance $t$ share a colour. The \emph{$t$-th power of a graph} $G$, denoted by $G^t$, is a graph which has the same vertex set as $G$, and two vertices are adjacent if and only if they are within distance $t$ in $G$. The \emph{line graph} of a graph $G=(V,E)$, denoted by $L(G)$, has as vertex set $E$, with two vertices adjacent if and only if their corresponding edges in $V$ are adjacent. The distance-$t$ edge-colouring problem is related to colouring powers of graphs as follows
\begin{equation}\label{eq:chrom_index_number_relation}
 \chi_t'(G) = \chi_t(L(G)) = \chi_1(L(G)^t).
\end{equation}

Research into the classical $t=1$ case goes way back. In 1964, Vizing, in the form of his eponymous theorem, proved that $\chi'(G) \in \{\Delta, \Delta+1\}$, where $\Delta$ is the maximum degree of $G$. However, determining whether $\chi'(G)$ is $\Delta$ or $\Delta+1$ remains an NP-complete problem \cite{holyer_np-completeness_1981}, even for regular graphs \cite{leven_np_1983}.

Distance-$2$ edge-colourings (also known as \emph{strong edge-colourings}) have a rich history which goes back to 1985 (cf. \cite{faudree_induced_1989}), and its research has been largely focused on the following conjecture by Erdős and Nešetřil:

\begin{conjecture}{(\cite[Section 1]{faudree_induced_1989})}
\label{con:erdos_nesetril}
    \begin{equation*}
        \chi_2'(G) \leq \begin{cases}
            \frac{5\Delta^2}{4} &\text{ if } \Delta \text{ is even,} \\
            \frac{5\Delta^2-2\Delta+1}{4}  &\text{ if } \Delta \text{ is odd.}
        \end{cases}
    \end{equation*}
\end{conjecture}

While much effort has been devoted to this conjecture (see e.g. \cite{molloy_bound_1997,bruhn_stronger_2015,hurley_improved_2021}), it remains wide open. Additionally, quite some work has been done on determining $\chi_2'(G)$ for specific classes of graphs (see \cite{wang_strong_2018} for an overview) and on bounding $\chi_2'(G)$ for random graphs (see e.g. \cite{P1998,V2002,CN2004,FKS2005}).

The general distance-$t$ chromatic index was first introduced by Skupień in the early 1990's \cite{skupien_maximum_1995}, and since then it has been studied from numerous angles. Skupień looked into the distance-$t$ chromatic index of paths, cycles and trees \cite{skupien_maximum_1995}, as well as of hypercube graphs, using a connection to coding theory \cite{skupien_bch_2007}. The distance-$t$ chromatic index of hypercube graphs, as well as meshes, was later determined exactly by Drira et al. \cite{drira_distance_2013}, who were motivated by a connection to collision-free communication in wireless sensor networks. Ito et al. \cite{IKZN2007} developed two polynomial-time algorithms for finding distance-$t$ edge-colourings of partial $k$-trees and planar graphs. Recently, this parameter has garnered quite some attention in the form of a distance-$t$ Erdős-Nešetřil problem, the distance-$t$ analog of Conjecture \ref{con:erdos_nesetril}. This entails obtaining upper bounds of the form $\chi'_t(G) \leq b\Delta^t$. A result by Kang and Manggala \cite{kang_distance_2012} provided a construction which shows $\chi'_t(G) = \Omega(\Delta^t)$, motivating research into bounds of the form $b\Delta^t$. Bounds of this form have since then been obtained using the probabilistic method \cite{kaiser_distance-t_2014, cambie_maximizing_2022}.  Additionally, the asymptotic behaviour of $\chi'_t(G)$ in terms of $\Delta$ has been investigated for graphs of given girth \cite{mahdian_strong_2000, kaiser_distance-t_2014} and for graphs without a specific cycle length \cite{kang_distance_2017}, using (probabilistic) constructions.

Until now most research has been devoted to deriving upper bounds for the distance-$t$ chromatic number. Largely done using probabilistic methods. In this paper, we investigate the distance-$t$ chromatic index of graphs using the spectral graph theory machinery. This allows us to derive two sharp lower bounds on the distance chromatic index that depend only on the eigenvalues of the line graph (and not the power of the line graph, the spectrum of which is in general unrelated to the spectrum of the line graph itself), and on the choice of a real polynomial of degree $t$. Regarding the latter, we show that finding the best possible lower bound for a given graph reduces to solving a linear integer optimization problem. For small $t$, we prove that our bounds are sharp for some graph classes. For general $t$, we illustrate the tightness of our bounds by means of computational results. Lastly, we investigate what the spectral approach says about Conjecture \ref{con:erdos_nesetril} and its distance-$t$ analog.

The remainder of this paper is organized as follows. In Section \ref{sec:preliminaries} we fix the notation and recall some preliminary results on line graphs. In Section \ref{sec:spec_chiprimet} we present 
two new lower bounds on $\chi_t'(G)$, which depend on some polynomial optimization, and we derive the best polynomial for one of the bounds for $t=2,3$. In Section \ref{sec:chiprime_performance} we look at the performance of the new eigenvalue bounds, both theoretically and computationally. In particular, we prove that the second lower bound is tight for several graph classes. Finally, in Section \ref{sec:erdos_nesetril} we investigate the distance-$t$ Erdős-Nešetřil problem using our spectral approach, and derive strict conditions on the spectrum that any counterexample found through this spectral method would need to satisfy.

\section{Preliminaries}\label{sec:preliminaries}
Throughout, let $G = (V,E)$ be a simple, undirected graph on $\abs{V} = n$ with $\abs{E} = m$ edges. We denote by $A$ the adjacency matrix of $G$, and we call the eigenvalues of $A$ the \emph{adjacency spectrum} of $G$. We denote the adjacency eigenvalues by $\lambda_1 \geq \cdots \geq \lambda_n$, and when we only want to consider distinct eigenvalues, we instead use the notation $\theta_0 > \theta_1 > \cdots > \theta_d$, where $d+1$ is the amount of distinct eigenvalues. When talking about adjacency eigenvalues of the line graph $L(G)$, we will instead use $\lambda'_1 \geq \cdots \geq \lambda'_m$ and $\theta'_0 > \cdots > \theta'_{d'}$.

The \emph{degree} of a vertex $v$, denoted $d(v)$, is the number of edges that $v$ is incident to. The \emph{maximum degree} of $G$, denoted by $\Delta$, is the maximum of $d(v)$ over all vertices $v \in V$. If $d(v) = k$ for all $v \in V$, then we call the graph \emph{$k$-regular}.

Line graphs are a frequently used tool which allow us to connect edge and vertex versions of the same type of graph parameter, see e.g. \cite{skupien_maximum_1995, skupien_bch_2007, kang_distance_2012}. There are some preliminary results that we need in order to exploit this link. 


    \begin{lemma}{(\cite[Chapter 3]{biggs_algebraic_1974})}\label{lem:LG_reg}
        Let $G$ be a $k$-regular graph, then $L(G)$ is $(2k-2)$-regular.
    \end{lemma}

    Since the new bounds will use the spectrum of the line graph, it is useful to characterize it in terms of the spectrum of the original graph. While in general no such connection is known, for regular graphs there is the following characterization.

    \begin{corollary}{(\cite[Theorem 3.8]{biggs_algebraic_1974})}\label{cor:LG_spec}
        Let $G$ be a $k$-regular graph with $n$ vertices and $m = \frac{nk}{2}$ edges and adjacency eigenvalues $\lambda_1 \geq \cdots \geq \lambda_n$. Then the line graph $L(G)$ has adjacency eigenvalues $\lambda_1+k-2 \geq \cdots \geq \lambda_n+k-2$ and the eigenvalues $-2$ an additional $m-n$ times.
    \end{corollary}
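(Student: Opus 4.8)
The plan is to realize both $A$ and the adjacency matrix of $L(G)$, which I will denote $A_{L(G)}$, through the $n \times m$ \emph{incidence matrix} $B$ of $G$, whose rows are indexed by vertices and columns by edges, with $B_{v,e} = 1$ precisely when $v$ is an endpoint of $e$. The two relevant Gram matrices are straightforward to read off from the definition. The $(u,v)$-entry of $BB^\top$ counts the edges incident to both $u$ and $v$, which gives $BB^\top = A + kI_n$ since $G$ is $k$-regular (so the degree matrix is $kI_n$). The $(e,f)$-entry of $B^\top B$ counts the common endpoints of the edges $e$ and $f$, equal to $2$ on the diagonal and to $1$ exactly when $e$ and $f$ are adjacent in $G$, so that $B^\top B = A_{L(G)} + 2I_m$.

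First I would record these two identities. From $BB^\top = A + kI_n$, the eigenvalues of $BB^\top$ are exactly $\lambda_i + k$ for $i = 1, \dots, n$. From $B^\top B = A_{L(G)} + 2I_m$, the eigenvalues of $A_{L(G)}$ are obtained from those of $B^\top B$ by subtracting $2$. Thus the whole problem reduces to relating the spectrum of the $m \times m$ matrix $B^\top B$ to that of the $n \times n$ matrix $BB^\top$.

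The key step is the standard determinantal identity $\det(xI_m - B^\top B) = x^{m-n}\det(xI_n - BB^\top)$, valid whenever $m \geq n$ (which holds for $k \geq 2$; the cases $k \in \{0,1\}$ are degenerate). This shows that $B^\top B$ and $BB^\top$ share the same nonzero eigenvalues with the same algebraic multiplicities, while $B^\top B$ carries the extra eigenvalue $0$ with multiplicity $m-n$. Hence the spectrum of $B^\top B$ consists of the values $\lambda_i + k$ for $i = 1, \dots, n$ together with $0$ repeated $m-n$ times, and subtracting $2$ as dictated by $A_{L(G)} = B^\top B - 2I_m$ yields precisely the claimed eigenvalues $\lambda_i + k - 2$ together with $-2$ appearing $m-n$ additional times.

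The one point demanding care is the bookkeeping of the zero eigenvalue: if $G$ is bipartite then $\lambda_n = -k$, so $BB^\top$ is already singular and the value $0$ occurs among the $\lambda_i + k$. The determinantal identity handles this transparently, since it tracks algebraic multiplicities on both sides simultaneously and simply contributes $m-n$ copies of $0$ to $B^\top B$ on top of whatever $BB^\top$ already has. After the shift by $-2$, this is exactly the assertion that $-2$ occurs $m-n$ times \emph{in addition} to any copies arising from eigenvalues $\lambda_i = -k$. I expect the two Gram-matrix computations and the invocation of this identity to be the only real content, with this additive bookkeeping of the $-2$ eigenvalue being the subtle point to state carefully.
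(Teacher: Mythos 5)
The paper offers no proof of this corollary, citing it directly to Biggs, and your argument is exactly the classical incidence-matrix proof given in that source: the Gram identities $BB^\top = A + kI_n$ and $B^\top B = A_{L(G)} + 2I_m$ combined with the fact that $B^\top B$ and $BB^\top$ share their nonzero spectrum, with $B^\top B$ acquiring $m-n$ extra zeros. Your proof is correct, including the careful additive bookkeeping of the eigenvalue $-2$ when $G$ is bipartite (where $\lambda_n = -k$ already contributes copies of $-2$ after the shift) and the restriction to $k \geq 2$ so that $m \geq n$.
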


    In a similar vein, there is in general no connection between the spectrum of a (line) graph and the power graph of said (line) graph. See e.g. \cite{dg13} and \cite[Section 2]{ACFNZ2022}. This gives a strong argument for our bounds in Section \ref{sec:spec_chiprimet}, as they depend only on the spectrum of the original (line) graph and do not require the power graph.

    We finally note that the spectrum of the line graph is equivalent to the spectrum of the signless Laplacian \cite[Proposition 1.4.1]{brouwer_spectra_2012}, but we do not use this connection in this paper.

\section{Spectral lower bounds}\label{sec:spec_chiprimet}

    This section presents two lower bounds on the distance-$t$ chromatic index that depend on the adjacency eigenvalues of $L(G)$ (which, for regular graphs, are directly related to the adjacency eigenvalues of $G$, by Corollary \ref{cor:LG_spec}) and on the choice of a polynomial of degree $t$. To derive them we leverage some known spectral bounds for distance colourings of graphs by the first author and collaborators \cite{ACFNZ2022}. For $t=2,3$ we also show the best choice of polynomial that optimizes our bounds and obtain closed bound expressions. The bounds' tightness will be showed in Section \ref{sec:chiprime_performance}.

    By $\mathbb{R}_t[x]$ we denote the set of polynomials of degree at most $t$ in the variable $x$.
    \begin{theorem}[First inertial-type bound]\label{thm:1st_inertial_chiprime}
            Let $G =(V,E)$ be a graph, let $L(G)$ be its line graph with adjacency matrix $A'$ and eigenvalues $\lambda'_1 \geq \lambda'_2 \geq \cdots \geq \lambda'_m$. Let $p \in \mathbb{R}_t[x]$ with corresponding parameters $W'(p) := \max_{e \in E} \{(p(A'))_{ee}\}$ and $w'(p) := \min_{e \in E} \{(p(A'))_{ee}\}$. Then,
            \begin{equation*}
                \chi'_t(G) \geq \frac{\abs{E}}{\min\{ \abs{\{i : p(\lambda'_i) \geq w'(p) \}},\abs{\{i : p(\lambda'_i) \leq W'(p) \}}\}}.
            \end{equation*}
    \end{theorem}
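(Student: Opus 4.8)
The plan is to reduce the statement to a distance-$t$ \emph{independence} estimate on the line graph and then combine it with the standard averaging inequality $\chi \geq n/\alpha$. Concretely, by the relation $\chi'_t(G) = \chi_t(L(G))$ recorded in \eqref{eq:chrom_index_number_relation}, it suffices to bound the distance-$t$ independence number $\alpha_t(L(G))$, i.e. the largest set $U \subseteq E$ of edges that are pairwise at distance greater than $t$ in $L(G)$. Each colour class of an optimal distance-$t$ colouring of $L(G)$ is such a set, and since $L(G)$ has $|E|$ vertices we get $\chi_t(L(G)) \geq |E|/\alpha_t(L(G))$. Thus the whole theorem follows once we show that $\alpha_t(L(G))$ is at most the quantity in the denominator, which is precisely the inertial-type independence bound of \cite{ACFNZ2022} instantiated on $L(G)$; I would either invoke it directly or reprove it as sketched next.

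The key structural observation is that a polynomial of degree at most $t$ "sees" only walks of length at most $t$. If $U$ is a distance-$t$ independent set and $e, f \in U$ are distinct, then there is no walk of length $\leq t$ between $e$ and $f$ in $L(G)$, so $(A'^k)_{ef} = 0$ for every $0 \leq k \leq t$. Writing $p(x) = \sum_{k=0}^t c_k x^k$ therefore gives $(p(A'))_{ef} = \sum_k c_k (A'^k)_{ef} = 0$. Hence the principal submatrix $B := (p(A'))[U]$ indexed by $U$ is diagonal, with diagonal entries $(p(A'))_{ee}$ for $e \in U$; in particular every eigenvalue of $B$ lies in the interval $[w'(p), W'(p)]$.

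Now I would apply Cauchy interlacing between the symmetric matrix $p(A')$, whose eigenvalues are exactly $p(\lambda'_1), \ldots, p(\lambda'_m)$, and its principal submatrix $B$ of order $s := |U|$. Interlacing yields $\eta_i \leq \mu_i$ and $\eta_i \geq \mu_{m-s+i}$, where $\mu_1 \geq \cdots \geq \mu_m$ and $\eta_1 \geq \cdots \geq \eta_s$ are the sorted eigenvalues of $p(A')$ and $B$ respectively. Since all $\eta_i \geq w'(p)$, the first inequality forces $\mu_1, \ldots, \mu_s$ to be $\geq w'(p)$, so at least $s$ of the $p(\lambda'_i)$ are $\geq w'(p)$; symmetrically, since all $\eta_i \leq W'(p)$, the second inequality forces $\mu_{m-s+1}, \ldots, \mu_m$ to be $\leq W'(p)$, so at least $s$ of the $p(\lambda'_i)$ are $\leq W'(p)$. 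Hence $s \leq \min\{|\{i : p(\lambda'_i) \geq w'(p)\}|, |\{i : p(\lambda'_i) \leq W'(p)\}|\}$, which is the sought bound on $\alpha_t(L(G))$; dividing $|E|$ by it finishes the argument.

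I expect the only real subtlety to be the bookkeeping in the interlacing step, namely extracting both one-sided counts from the two interlacing inequalities while keeping the orientation of $\geq w'(p)$ versus $\leq W'(p)$ straight, together with the routine but essential facts that $p(A')$ is symmetric with spectrum $\{p(\lambda'_i)\}$ and that the distance condition annihilates exactly the walk lengths up to $t$. Since this is the vertex distance-colouring inertia bound transported through the line-graph identity, the cleanest write-up is probably to cite \cite{ACFNZ2022} for the independence estimate and devote the proof itself to the reduction via \eqref{eq:chrom_index_number_relation} and the $\chi \geq n/\alpha$ step.
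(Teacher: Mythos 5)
Your proposal is correct and follows essentially the same route as the paper: the paper's proof is exactly the reduction you describe, namely applying the inertial-type bound of \cite{ACFNZ2022} (their Equation (18)) to $L(G)$ and invoking the identity $\chi'_t(G) = \chi_t(L(G))$ from \eqref{eq:chrom_index_number_relation}. Your additional interlacing argument (diagonal principal submatrix of $p(A')$ on a distance-$t$ independent set, then Cauchy interlacing to extract both one-sided eigenvalue counts, then $\chi \geq n/\alpha$) is a correct re-derivation of the cited result rather than a different approach, and your own closing suggestion to cite \cite{ACFNZ2022} and keep only the reduction is precisely what the paper does.
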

    \begin{proof}
        We apply \cite[Equation (18)]{ACFNZ2022} to $L(G)$. This, plus \eqref{eq:chrom_index_number_relation}, gives the desired bound.
    \end{proof}

    We call a graph $G = (V,E)$ \emph{$t$-partially walk-regular} if for each positive integer $\ell \leq t$ and any two vertices $u,v \in V$, the amount of paths of length $\ell$ from $u$ to $v$ is independent of choice of vertices $u,v$.

    \begin{theorem}[Second inertial-type bound]\label{thm:2nd_inertial_chiprime}
        Let $G$ be a graph such that $L(G)$ is $t$-partially walk-regular. Let $L(G)$ have adjacency eigenvalues $\lambda'_1 \geq \cdots \geq \lambda'_m$. Let $p \in \mathbb{R}_t[x]$ such that $\sum_{i=1}^m p(\lambda'_i) = 0$. Then,
        \begin{equation*}
            \chi'_t(G) \geq 1 + \max\left(\frac{\abs{\{j:p(\lambda'_j) <0\}}}{\abs{\{j:p(\lambda'_j) >0\}}}\right).
        \end{equation*}
    \end{theorem}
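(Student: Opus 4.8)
The plan is to mirror the proof of Theorem \ref{thm:1st_inertial_chiprime}: I would apply a ratio-type inertia bound for the distance chromatic number of \cite{ACFNZ2022} to the line graph $L(G)$ and then pass through the identity \eqref{eq:chrom_index_number_relation}. The first step is to use the two hypotheses to normalise the matrix $p(A')$. Since $L(G)$ is $t$-partially walk-regular, for every $\ell \le t$ the number of closed $\ell$-walks is the same at each edge, so $((A')^\ell)_{ee}$ is independent of $e$; as $\deg p \le t$ this makes the diagonal of $p(A')$ constant, equal to $\frac{1}{m}\operatorname{tr} p(A') = \frac{1}{m}\sum_{i=1}^m p(\lambda'_i)$. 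The assumption $\sum_i p(\lambda'_i)=0$ then forces $(p(A'))_{ee}=0$ for all $e\in E$. Because $p$ has degree at most $t$, the entry $(p(A'))_{ef}$ vanishes whenever $e,f$ are at distance more than $t$ in $L(G)$; hence $B:=p(A')$ is a symmetric zero-diagonal matrix supported on the edges of $L(G)^t$, i.e. a weighted adjacency matrix of $L(G)^t$ whose eigenvalues are exactly the numbers $p(\lambda'_i)$.

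With $B$ in this form I would apply the count-ratio inertia bound for the chromatic number to $L(G)^t$. Fix an optimal proper colouring of $L(G)^t$ into $k:=\chi_1(L(G)^t)=\chi'_t(G)$ classes and let $P_1,\dots,P_k$ be the orthogonal projections onto the corresponding coordinate subspaces. Each colour class is independent in $L(G)^t$, and since $B$ is supported on $L(G)^t$ with zero diagonal, every diagonal block of $B$ vanishes: $\sum_{i=1}^k P_i B P_i = 0$. Writing $B=B^+-B^-$ for its positive and negative parts, this reads $\sum_i P_i B^+P_i=\sum_i P_iB^-P_i$, and the goal is to convert it into $\abs{\{j:p(\lambda'_j)<0\}}\le (k-1)\,\abs{\{j:p(\lambda'_j)>0\}}$, equivalently $\operatorname{rank}(B^-)\le (k-1)\operatorname{rank}(B^+)$. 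Applying the same argument to $-p$ (which also has zero trace and satisfies the same hypotheses) yields the reversed inequality; together they give the claimed maximum, which is the reason the bound is stated with a $\max$.

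The main obstacle is precisely this count-ratio step. A naive eigenvalue-interlacing of the $k\times k$ quotient matrix of $B$ recovers only the information $\operatorname{tr}=0$ and is too weak to control the ratio of the sign counts, so the vanishing-block-diagonal identity must be exploited more forcefully. I expect to have to feed $\sum_i P_iB^+P_i=\sum_i P_iB^-P_i$ into a rank/inertia comparison in the spirit of \cite{ACFNZ2022}, tracking how the positive and negative ranks distribute over the $k$ independent blocks; it is there that the factor $k-1$ (rather than $k$) enters, exactly as in the tight instance $L(G)^t=K_m$, where $B=A(K_m)$ has a single positive and $m-1$ negative eigenvalues while $\chi=m$. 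Once that inequality is established, dividing through and adding $1$ gives $\chi_1(L(G)^t)\ge 1+\abs{\{j:p(\lambda'_j)<0\}}/\abs{\{j:p(\lambda'_j)>0\}}$, and \eqref{eq:chrom_index_number_relation} translates this directly into the stated lower bound on $\chi'_t(G)$.
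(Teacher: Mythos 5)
Your setup is correct and in fact reconstructs the internal mechanics of the result the paper relies on: $t$-partial walk-regularity makes the diagonal of $p(A')$ constant, the trace condition forces it to vanish, $\deg p\le t$ confines the support of $B:=p(A')$ to the edge set of $L(G)^t$, and hence every colour class of a proper $k$-colouring of $L(G)^t$ gives a vanishing diagonal block, $\sum_{i=1}^k P_iBP_i=0$. But the proof stops exactly where the theorem lives. You state the decisive inequality $n_-(B)\le (k-1)\,n_+(B)$ (writing $n_\pm$ for the numbers of eigenvalues of each sign) as a goal, observe that quotient interlacing only recovers $\operatorname{tr}B=0$, and then defer to an unspecified ``rank/inertia comparison in the spirit of \cite{ACFNZ2022}.'' That is a genuine gap: nothing in the proposal extracts the factor $k-1$ from the identity $\sum_i P_iB^+P_i=\sum_i P_iB^-P_i$, and rewriting the claim as $\operatorname{rank}(B^-)\le(k-1)\operatorname{rank}(B^+)$ is not a reduction, since $\operatorname{rank}(B^\pm)$ \emph{is} the count of eigenvalues of the corresponding sign.

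The missing idea is the unitary-conjugation argument of Elphick and Wocjan, which \cite[Theorem 4.2]{ACFNZ2022} extends to graph powers via precisely your polynomial normalisation. Set $\omega=e^{2\pi i/k}$ and $U=\sum_{j=1}^k\omega^jP_j$; then
\begin{equation*}
\sum_{\ell=0}^{k-1}U^{\ell}BU^{-\ell}=k\sum_{j=1}^{k}P_jBP_j=0,
\qquad\text{hence}\qquad
-B=\sum_{\ell=1}^{k-1}U^{\ell}BU^{-\ell}.
\end{equation*}
Each summand is unitarily similar to $B$, and the number of positive eigenvalues is subadditive for sums of Hermitian matrices (Weyl), so $n_-(B)=n_+(-B)\le(k-1)\,n_+(B)$; applying this to $-p$ as you suggest gives the other ratio, and \eqref{eq:chrom_index_number_relation} finishes the argument. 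Be aware that the paper itself does none of this: its proof of Theorem \ref{thm:2nd_inertial_chiprime} is a one-liner applying \cite[Theorem 4.2]{ACFNZ2022} to $L(G)$ and invoking \eqref{eq:chrom_index_number_relation}, exactly parallel to Theorem \ref{thm:1st_inertial_chiprime}. So the efficient repair is to cite that theorem outright; if you want your self-contained route, you must include the conjugation step above, which is the entire content of the ``count-ratio step'' you left open.
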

    \begin{proof}
        By assumption, $L(G)$ is $t$-partially walk-regular, thus we can apply \cite[Theorem 4.2]{ACFNZ2022}. The bound then follows immediately from \eqref{eq:chrom_index_number_relation}.    
    \end{proof}
    Note that, unlike the other bounds in this section where we are able to use Lemma \ref{lem:LG_reg} to assert regularity of $L(G)$, we cannot know if $L(G)$ is $t$-partially walk-regular by simply checking if $G$ is $t$-partially walk-regular. Hence, we need the condition that $L(G)$ is $t$-partially walk-regular for Theorem \ref{thm:2nd_inertial_chiprime} to hold.

    We next present two Hoffman-type bounds Theorem \ref{thm:ratio_chiprime} holds for general graphs, whereas Theorem \ref{thm:ratio_chiprime_reg} holds only for regular graphs, but gives a stronger bound due to a rounding argument. 
    \begin{theorem}[General Hoffman-type bound]\label{thm:ratio_chiprime}
        Let $G$ be a graph, let $L(G)$ be its line graph with adjacency matrix $A'$ and eigenvalues $\lambda'_1 \geq \lambda'_2 \geq \cdots \geq \lambda'_m$. Let $p \in \mathbb{R}_t[x]$ with corresponding parameters $W'(p) := \max_{e \in E}$ $\{(p(A'))_{ee}\}$ and $ \lambda'(p):=\min_{2\leq i \leq m} \{p(\lambda'_i)\}$, and assume $p(\lambda'_1) > \lambda'(p)$. Then, 
        \begin{equation}\label{eq:ratio_chiprime}
            \chi'_t(G) \geq \frac{p(\lambda'_1) - \lambda'(p)}{W'(p) - \lambda'(p)}.
        \end{equation}
    \end{theorem}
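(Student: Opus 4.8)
The plan is to follow the same template used for Theorems \ref{thm:1st_inertial_chiprime} and \ref{thm:2nd_inertial_chiprime}: transport a known spectral bound for the distance-$t$ chromatic number through the line graph and then invoke the identity \eqref{eq:chrom_index_number_relation}. Concretely, \cite{ACFNZ2022} contains a general ratio-type (Hoffman-type) eigenvalue bound for $\chi_t(\Gamma)=\chi_1(\Gamma^t)$ that is valid for an arbitrary graph $\Gamma$ and any polynomial $p\in\mathbb{R}_t[x]$, phrased through $p(\lambda_1)$, the quantity $\min_{i\in[2,n]}p(\lambda_i)$, and the largest diagonal entry $\max_u(p(A_\Gamma))_{uu}$. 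First I would isolate this statement from \cite{ACFNZ2022} and confirm that, in contrast to Theorem \ref{thm:2nd_inertial_chiprime}, its only hypothesis is the nondegeneracy condition that the numerator be positive, i.e. $p(\lambda_1)>\min_{i\in[2,n]}p(\lambda_i)$, with no regularity or $t$-partial walk-regularity assumption. This is precisely why it serves as the \emph{general} Hoffman-type bound, and it is also the reason Theorem \ref{thm:ratio_chiprime} can be stated for arbitrary $G$.

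Next I would instantiate that bound with $\Gamma=L(G)$. Taking $A_\Gamma=A'$, the adjacency eigenvalues become $\lambda'_1\geq\cdots\geq\lambda'_m$, the parameter $\min_{i\in[2,n]}p(\lambda_i)$ becomes $\lambda'(p)=\min_{i\in[2,m]}\{p(\lambda'_i)\}$, and the maximal diagonal entry becomes $W'(p)=\max_{e\in E}\{(p(A'))_{ee}\}$, where the switch from vertices to edges is exactly the identification of the vertex set of $L(G)$ with $E$. The assumption $p(\lambda'_1)>\lambda'(p)$ is the required nondegeneracy condition, so the cited bound yields $\chi_t(L(G))\geq \frac{p(\lambda'_1)-\lambda'(p)}{W'(p)-\lambda'(p)}$. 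Finally \eqref{eq:chrom_index_number_relation} gives $\chi'_t(G)=\chi_t(L(G))$, and \eqref{eq:ratio_chiprime} follows at once.

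If a self-contained argument were wanted in place of the citation, I would reprove the underlying bound by interlacing. Fix an optimal distance-$t$ colouring of $L(G)$ into $k=\chi_t(L(G))$ classes; each class is a set of edges that are pairwise at distance greater than $t$ in $L(G)$. Since $\deg p\leq t$ and $(A'^{\,s})_{ef}$ vanishes whenever the distance between $e$ and $f$ exceeds $s$, the off-diagonal entries of $p(A')$ within each colour class are zero. Passing to the $k\times k$ quotient matrix $B$ of $p(A')$ with respect to this partition, the diagonal entries of $B$ are averages of the numbers $(p(A'))_{ee}$ and hence at most $W'(p)$, while Haemers' interlacing relates the spectrum of $B$ to that of $p(A')$, whose eigenvalues are the $p(\lambda'_i)$. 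The main obstacle on this route is the interlacing bookkeeping: one must track the Perron direction so that $p(\lambda'_1)$ (the value of $p$ at the top \emph{adjacency} eigenvalue, which need not be the top eigenvalue of $p(A')$) enters in the correct position, and one must see why the relevant lower estimate on the remaining spectrum is the minimum over $i\in[2,m]$ rather than over all $i$, reflecting the projection away from the Perron eigenvector. Since the paper's route is the citation, the only genuine checkpoint is to align the index conventions and the nondegeneracy hypothesis of \cite{ACFNZ2022} with those fixed in Section \ref{sec:preliminaries}.
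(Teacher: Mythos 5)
Your proposal takes essentially the same route as the paper: the paper's proof likewise consists of applying \cite[Theorem 4.3]{ACFNZ2022} to $L(G)$ (with the hypothesis $p(\lambda'_1) > \lambda'(p)$ guaranteeing its applicability) and then invoking \eqref{eq:chrom_index_number_relation} to translate $\chi_t(L(G))$ into $\chi'_t(G)$. Your additional sketch of a self-contained interlacing argument goes beyond what the paper records, but the core proof is identical.
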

    \begin{proof}
        By assumption, the conditions are met to apply \cite[Theorem 4.3]{ACFNZ2022} to $L(G)$. This, plus \eqref{eq:chrom_index_number_relation}, gives the desired bound.
    \end{proof}

    \begin{theorem}[Hoffman-type bound]\label{thm:ratio_chiprime_reg}
        Let $G$ be a regular graph, let $L(G)$ be its line graph with adjacency matrix $A'$ and eigenvalues $\lambda'_1 \geq \lambda'_2 \geq \cdots \geq \lambda'_m$. Let $p \in \mathbb{R}_t[x]$ with corresponding parameters $W'(p) := \max_{e \in E}$ $\{(p(A'))_{ee}\}$ and $ \lambda'(p):= \min_{2\leq i \leq m} \{p(\lambda'_i)\}$, and assume $p(\lambda'_1) > \lambda'(p)$. Then, 
        \begin{equation*}
            \chi'_t(G) \geq \frac{\abs{E}}{\left\lfloor\abs{E}\frac{W'(p) - \lambda'(p)}{p(\lambda'_1) - \lambda'(p)}\right\rfloor}.
        \end{equation*}
    \end{theorem}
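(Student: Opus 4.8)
The plan is to reuse the route behind Theorem~\ref{thm:ratio_chiprime}, but to sharpen its last step by exploiting that the independence number is an integer, which is possible precisely because regularity of $G$ forces regularity of $L(G)$.

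First I would move from the edge-colouring to a vertex-colouring problem using \eqref{eq:chrom_index_number_relation}, so that it suffices to bound $\chi_1(L(G)^t)$ from below. For any graph $H$ one has the elementary inequality $\chi_1(H) \geq \abs{V(H)}/\alpha(H)$, where $\alpha(H)$ is the independence number; taking $H = L(G)^t$ gives $\chi'_t(G) \geq \abs{E}/\alpha(L(G)^t)$. The whole problem therefore reduces to an upper bound on $\alpha(L(G)^t)$, i.e.\ on the maximum number of edges of $G$ that are pairwise at distance more than $t$.

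To bound $\alpha(L(G)^t)$ I would invoke the Hoffman-type ratio argument underlying \cite[Theorem 4.3]{ACFNZ2022}, applied to $L(G)$. Since $G$ is regular, Lemma~\ref{lem:LG_reg} guarantees that $L(G)$ is regular, so the all-ones vector is a $\lambda'_1$-eigenvector of $A'$ and $p(A')$ has constant row sums. As $\deg p = t$, for two edges $e,e'$ at distance greater than $t$ in $G$ every walk-counting term $((A')^{k})_{ee'}$ with $k \leq t$ vanishes, hence $(p(A'))_{ee'} = 0$; thus $p(A')$ acts diagonally on any distance-$t$ independent set, and the interlacing step yields
\begin{equation*}
    \alpha(L(G)^t) \;\leq\; \abs{E}\,\frac{W'(p)-\lambda'(p)}{p(\lambda'_1)-\lambda'(p)}.
\end{equation*}

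Finally, since $\alpha(L(G)^t)$ is a nonnegative integer, I would round the right-hand side down and substitute the result into $\chi'_t(G) \geq \abs{E}/\alpha(L(G)^t)$, giving exactly the claimed bound. The one delicate point---and the reason this improvement is unavailable in the general Theorem~\ref{thm:ratio_chiprime}---is the legitimacy of the floor: it is valid only when the ratio expression genuinely bounds the integer $\alpha(L(G)^t)$, which requires the all-ones vector to lie in the top eigenspace of $A'$, and hence the regularity of $L(G)$. Without it, the ratio quantity bounds $\chi$ directly rather than through $\alpha$, so no floor may be inserted.
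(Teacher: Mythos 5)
Your proof is correct and follows essentially the same route as the paper: the paper's proof simply applies \cite[Theorem 2]{ANR2024} to $L(G)$ (justified by Lemma \ref{lem:LG_reg}) together with \eqref{eq:chrom_index_number_relation}, and your argument is exactly the content of that cited theorem unfolded. In particular, your chain $\chi'_t(G)=\chi_1(L(G)^t)\geq \abs{E}/\alpha(L(G)^t)$, the regular ratio bound on $\alpha(L(G)^t)$ via interlacing on $p(A')$, and the integrality rounding, including your observation that the floor is only legitimate because regularity of $L(G)$ lets the bound pass through the integer $\alpha(L(G)^t)$, matches the internals of \cite[Theorem 2]{ANR2024} applied to $L(G)$.
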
    
    \begin{proof}
        Lemma \ref{lem:LG_reg} tells us that $L(G)$ is $2(k-1)$-regular, meaning \cite[Theorem 2]{ANR2024} is applicable to $L(G)$. This, plus \eqref{eq:chrom_index_number_relation}, gives us the desired bound.
    \end{proof}

    Since the bounds in this section were originally derived for $\chi_t(G)$ and $\alpha_t(G)$ (where $\alpha_t(G) = \alpha(G^t)$, the independence number of the power graph) using interlacing techniques (see \cite{abiad_k_2019, ACFNZ2022}), one might expect to be able to directly derive bounds for $\chi_t'(G)$ using similar techniques. However, these techniques make heavy use of the way the vertices are described in the adjacency matrix, and this does not work for edges, which the parameter $\chi_t'(G)$ is based on.

\subsection{Optimization of the bounds on \texorpdfstring{$\chi'_t(G)$}{chit'(G)}}\label{sec:chiprime_MILP}
    
    As the above bounds on $\chi_t'(G)$ from this section were derived by using bounds on $\chi_t(L(G))$, we too can use mixed integer linear programming (MILP) and linear programming (LP) implementations for these bounds by using the existing implementations for the corresponding analogous eigenvalue bounds on $\chi_t(G)$. Indeed, simply use as input the graph $L(G)$ instead of $G$ in the following mixed linear programs:
    \begin{itemize}
       \item \textbf{First inertial type bound (Theorem \ref{thm:1st_inertial_chiprime})} MILP (20) from \cite{ACFNZ2022}.
       \item \textbf{Second inertial type bound (Theorem \ref{thm:2nd_inertial_chiprime}):} MILP (3.18) from \cite{zeijlemaker_optimization_2024}. (Originally appeared incorrectly in \cite{ACFNZ2022}).
       \item \textbf{Hoffman-type bound (Theorems \ref{thm:ratio_chiprime} and \ref{thm:ratio_chiprime_reg})}: LP (4) and LP (5) from \cite{ANR2024}. There is one technicality that needs to be considered regarding LP (5) from \cite{ANR2024}: this LP requires $G$ to be $t$-partially walk-regular, which in our case means we require $L(G)$ to be $t$-partially walk-regular. We note that \cite[LP (5)]{ANR2024} is based on an LP formulated by Fiol \cite{fiol_new_2020}.
    \end{itemize}

    \subsection{Closed formulas for the bounds on \texorpdfstring{$\chi'_2(G)$}{chi'2(G)} and \texorpdfstring{$\chi'_3(G)$}{chi'3(G)} }\label{sec:closedformulas}
    In addition to the above bounds for general polynomials $p$ and general $t$, we can also obtain specific bounds for $\chi'_2(G)$ and $\chi'_3(G)$ as follows.
    \begin{corollary}\label{cor:ratio_chiprime_2}
        Let $G$ be a graph with $n$ vertices. Denote by $\theta'_0 > \cdots > \theta'_{d'}$ the distinct adjacency eigenvalues of $L(G)$, with $d' \geq 2$ and by $\Delta'$ the maximum degree of $L(G)$. Let $\theta'_i$ be the largest eigenvalue such that $\theta'_i \leq -\frac{\Delta'}{\theta'_0}$.
Then, 
        \begin{equation}
            \chi'_2(G) \geq \frac{(\theta'_0 - \theta'_i)(\theta'_0 - \theta'_{i-1})}{\Delta' + \theta'_i\theta'_{i-1}}.
        \end{equation}
        No better bound on $\chi_2'(G)$ can be obtained by using Theorem \ref{thm:ratio_chiprime}.
    \end{corollary}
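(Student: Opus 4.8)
The plan is to specialise Theorem~\ref{thm:ratio_chiprime} to $t=2$ and then maximise its right-hand side over all admissible quadratics $p(x)=ax^{2}+bx+c\in\mathbb{R}_2[x]$. The first step is to evaluate the parameters of \eqref{eq:ratio_chiprime} on $L(G)$. Since $(A'^{0})_{ee}=1$, $(A')_{ee}=0$ and $(A'^{2})_{ee}=\deg_{L(G)}(e)$ for every edge $e$, the diagonal entries of $p(A')$ are $(p(A'))_{ee}=a\deg_{L(G)}(e)+c$, so for $a>0$ we obtain $W'(p)=a\Delta'+c$, where $\Delta'$ is the maximum degree of $L(G)$. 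I would then note that the ratio in \eqref{eq:ratio_chiprime} is invariant under $p\mapsto\alpha p+\beta$ with $\alpha>0$, since numerator and denominator both scale by $\alpha$ while the additive constant cancels. Normalising $a=1$, the bound depends on the single parameter $b$ through
\[
f(b)=\frac{\theta_0'^{2}+b\theta_0'-g(b)}{\Delta'-g(b)},\qquad g(b):=\min_{j\ge 2}\bigl(\lambda_j'^{2}+b\lambda_j'\bigr),
\]
where I use $\lambda_1'=\theta_0'$ (simple, e.g.\ when $L(G)$ is connected) and the additive constant $c$ has dropped out.

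Second, I would carry out the one-dimensional optimisation of $f$. The map $g$ is a minimum of affine functions of $b$, hence concave and piecewise linear, with breakpoints at $b=-(\theta_s'+\theta_{s-1}')$ where two consecutive distinct eigenvalues $\theta_s'<\theta_{s-1}'$ tie for the minimum. On the interior of any linear piece $g(b)=\mu^{2}+b\mu$ with a single active eigenvalue $\mu$, both numerator and denominator of $f$ are affine in $b$, so $f$ is a M\"obius function of $b$ and is monotone there; consequently the maximum of $f$ is attained at a breakpoint. At the breakpoint indexed by $(\theta_i',\theta_{i-1}')$ the normalisation produces $p(x)=(x-\theta_i')(x-\theta_{i-1}')$, an upward parabola that vanishes at the two consecutive eigenvalues and is nonnegative at every other eigenvalue (none lies strictly between them), whence $\lambda'(p)=0$. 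Substituting yields the candidate value $\frac{(\theta_0'-\theta_i')(\theta_0'-\theta_{i-1}')}{\Delta'+\theta_i'\theta_{i-1}'}$, matching the claimed expression.

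The crux, and the step I expect to be the main obstacle, is identifying which consecutive pair is optimal. On the piece with active eigenvalue $\mu$ a direct differentiation shows that the sign of $f'(b)$ equals the sign of $(\theta_0'-\mu)(\Delta'+\theta_0'\mu)$; as $\mu<\theta_0'$, this is positive exactly when $\mu>-\Delta'/\theta_0'$. Thus, as the active eigenvalue decreases through the spectrum (equivalently $b$ increases), $f$ increases while the active eigenvalue exceeds $-\Delta'/\theta_0'$ and decreases once it drops to or below this threshold. The global maximum therefore occurs at the breakpoint separating the last eigenvalue above the threshold from the first at or below it, i.e.\ at the pair with $\theta_{i-1}'>-\Delta'/\theta_0'\ge\theta_i'$ --- precisely the index $i$ in the statement. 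Since this is the global maximum of \eqref{eq:ratio_chiprime} over all quadratics, the same computation proves the final assertion that Theorem~\ref{thm:ratio_chiprime} yields no better bound.

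Finally I would dispatch the remaining technical points. The hypothesis $p(\lambda_1')>\lambda'(p)$ holds because $p(\theta_0')=(\theta_0'-\theta_i')(\theta_0'-\theta_{i-1}')>0=\lambda'(p)$, and positivity of the denominator is then automatic: from $W'(p)\ge\frac1m\operatorname{tr}p(A')=\frac1m\sum_{i}p(\lambda_i')$ one gets $W'(p)-\lambda'(p)\ge\frac1m\sum_{i}\bigl(p(\lambda_i')-\lambda'(p)\bigr)\ge\frac1m\bigl(p(\lambda_1')-\lambda'(p)\bigr)>0$. It remains to confirm that the optimum is attained with positive leading coefficient: a linear $p$ ($a=0$) is the degenerate endpoint of the family and is dominated by the interior breakpoint above, while for $a<0$ one has $W'(p)=a\delta'+c$ and $p(\theta_0')$ is pushed down at the rightmost eigenvalue, so a short parallel analysis shows it cannot beat the $a>0$ optimum. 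One should also record that $i\ge 2$ (so that $\theta_{i-1}'$ is a genuine non-leading eigenvalue), which follows from $\theta_1'>-\Delta'/\theta_0'$ under the stated hypotheses, and that such a $\theta_i'$ exists, as is guaranteed for line-graph spectra.
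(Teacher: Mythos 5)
Your proposal is correct in substance, but it takes a genuinely different (and much longer) route than the paper. The paper's proof is two lines: it applies \cite[Theorem 3]{ANR2024} to $L(G)$ together with the identity \eqref{eq:chrom_index_number_relation} $\chi_2'(G)=\chi_2(L(G))$, and it gets the optimality assertion by contradiction against the optimality statement already proved in \cite[Theorem 3]{ANR2024}. What you have done is open up that black box and re-derive its content for $L(G)$ from scratch: computing $(p(A'))_{ee}=a\deg_{L(G)}(e)+c$, using scale/shift invariance of the ratio to reduce to the one-parameter family $p(x)=x^2+bx$, observing that $g(b)=\min_{j\ge2}(\lambda_j'^2+b\lambda_j')$ is concave piecewise linear so the maximum of the M\"obius-type function $f(b)$ sits at a breakpoint, and computing that $\operatorname{sign} f'(b)=\operatorname{sign}\bigl((\theta_0'-\mu)(\Delta'+\theta_0'\mu)\bigr)$ on the piece with active eigenvalue $\mu$ --- which is exactly where the threshold $-\Delta'/\theta_0'$ and the optimal polynomial $p(x)=(x-\theta_i')(x-\theta_{i-1}')$ come from. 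I checked this derivative computation and the trace argument for positivity of the denominator; both are correct, and your unimodality argument correctly identifies the optimal consecutive pair, which simultaneously yields the ``no better bound'' clause. The trade-off: your version is self-contained and actually explains why the threshold eigenvalue appears, whereas the paper's citation-based proof is shorter and silently inherits all the boundary bookkeeping from \cite{ANR2024}. That bookkeeping is where your write-up is thinnest: the dismissal of the $a<0$ case is asserted (``a short parallel analysis'') rather than carried out, the simplicity of $\lambda_1'$ (connectivity of $L(G)$) is assumed parenthetically though the corollary does not state it, and the existence of an eigenvalue $\theta_i'\le-\Delta'/\theta_0'$ with $i\ge2$ is deferred to ``line-graph spectra'' without proof --- in the paper all of these are absorbed into the hypotheses and conventions of the cited theorem. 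These are completeness gaps in an otherwise sound re-proof, not errors; if you intend your argument to stand alone rather than cite \cite{ANR2024}, the $a\le0$ analysis and the existence/index claims must be written out.
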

    \begin{proof}
        The bound follows directly from \cite[Theorem 3]{ANR2024} after applying \eqref{eq:chrom_index_number_relation}. What remains to be proven is that this is the best possible choice for Theorem \ref{thm:ratio_chiprime}. Suppose there was some polynomial $p \in \mathbb{R}_2[x]$ which, when used to obtain a bound from Theorem  \ref{thm:ratio_chiprime}, gives a stronger bound on $\chi'_2(G)$. Then this $p$ will also result in a stronger bound on $\chi_2(L(G))$ than the one from \cite[Theorem 3]{ANR2024}. This contradicts the optimality assertion from \cite[Theorem 3]{ANR2024}, thus no such $p$ can exist, and thus no better bound can be obtained from Theorem \ref{thm:ratio_chiprime}.
    \end{proof}

    \begin{corollary}\label{cor:ratio_chiprime_2_reg}
        Let $G$ be a $k$-regular graph. Let $2(k-1) = \theta'_0 > \cdots  > \theta'_{d'}$ be the distinct adjacency eigenvalues of $L(G)$, with $d' \geq 2$. Let $\theta'_i$ be the largest eigenvalue such that $\theta'_i \leq -1$. Then, \begin{equation}\label{eq:ratio_chiprime_2_reg}
                \chi_2'(G) \geq \frac{\abs{E}}{\left\lfloor\abs{E}\frac{\theta'_0 + \theta'_i\theta'_{i-1}}{(\theta'_0 - \theta'_i)(\theta'_0 - \theta'_{i-1})}\right\rfloor}.
            \end{equation}
        No better bound on $\chi_2'(G)$ can be obtained by using Theorem \ref{thm:ratio_chiprime_reg}.    
    \end{corollary}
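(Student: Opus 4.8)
The plan is to follow the proof of Corollary \ref{cor:ratio_chiprime_2} almost verbatim, substituting the rounded regular bound of Theorem \ref{thm:ratio_chiprime_reg} for the general ratio bound of Theorem \ref{thm:ratio_chiprime}. Since $G$ is $k$-regular, Lemma \ref{lem:LG_reg} gives that $L(G)$ is $(2k-2)$-regular, so its largest eigenvalue equals its degree, $\theta'_0 = 2k-2 = \Delta'$. First I would record the consequences of this equality for the quantities appearing in Corollary \ref{cor:ratio_chiprime_2}: the threshold $\theta'_i \leq -\Delta'/\theta'_0$ defining the distinguished eigenvalue collapses to $\theta'_i \leq -1$, and the denominator $\Delta' + \theta'_i\theta'_{i-1}$ becomes $\theta'_0 + \theta'_i\theta'_{i-1}$. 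Taking the degree-$2$ polynomial $p(x) = (x-\theta'_i)(x-\theta'_{i-1})$ underlying Corollary \ref{cor:ratio_chiprime_2} — for which $\lambda'(p)=0$ (no eigenvalue lies strictly between the consecutive $\theta'_i,\theta'_{i-1}$), $W'(p) = \Delta' + \theta'_i\theta'_{i-1}$ (since $(A'^2)_{ee}=\Delta'$ is constant in the regular case and $(A')_{ee}=0$), and $p(\lambda'_1) = (\theta'_0-\theta'_i)(\theta'_0-\theta'_{i-1})$ — and feeding it into Theorem \ref{thm:ratio_chiprime_reg} applied to $L(G)$, then translating through \eqref{eq:chrom_index_number_relation}, produces exactly the rounded bound \eqref{eq:ratio_chiprime_2_reg}.

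For the optimality claim I would argue by contradiction, as in Corollary \ref{cor:ratio_chiprime_2}, but with one additional step to accommodate the floor. Abbreviate $r(p) := \frac{W'(p)-\lambda'(p)}{p(\lambda'_1)-\lambda'(p)}$, so that Theorem \ref{thm:ratio_chiprime_reg} delivers the bound $|E|/\lfloor |E|\,r(p)\rfloor$ while Theorem \ref{thm:ratio_chiprime} delivers $1/r(p)$. The key observation is that $|E|/\lfloor |E|\,r\rfloor$ is a weakly decreasing function of $r$: if $r(p_1)\leq r(p_2)$ then $\lfloor |E|\,r(p_1)\rfloor \leq \lfloor |E|\,r(p_2)\rfloor$, hence the bound for $p_1$ is at least the bound for $p_2$. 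Consequently, any $p\in\mathbb{R}_2[x]$ yielding a strictly stronger bound than \eqref{eq:ratio_chiprime_2_reg} via Theorem \ref{thm:ratio_chiprime_reg} must have $r(p)$ strictly smaller than the value attained by $(x-\theta'_i)(x-\theta'_{i-1})$; but a strictly smaller $r(p)$ is a strictly larger $1/r(p)$, so this $p$ would also improve the unrounded bound of Theorem \ref{thm:ratio_chiprime}, contradicting the optimality established in Corollary \ref{cor:ratio_chiprime_2} (equivalently, in \cite[Theorem 3]{ANR2024}). Hence no such $p$ exists.

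The step I expect to require the most care is this transfer of optimality across the floor, since the optimality in \cite[Theorem 3]{ANR2024} is phrased for the unrounded ratio: one must be sure that the polynomial minimizing $r(p)$ simultaneously minimizes $\lfloor |E|\,r(p)\rfloor$ and therefore maximizes $|E|/\lfloor |E|\,r(p)\rfloor$. Monotonicity of the floor makes this immediate, but it is genuinely the only place where the argument departs from that of Corollary \ref{cor:ratio_chiprime_2}; the remainder is a direct specialization obtained by setting $\Delta'=\theta'_0$ throughout.
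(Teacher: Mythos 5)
Your proof is correct, but it is more self-contained than the paper's, which is essentially a two-line citation: the paper invokes \cite[Corollary 4]{ANR2024} applied to $L(G)$ (justified by Lemma \ref{lem:LG_reg}) together with \eqref{eq:chrom_index_number_relation}, and for optimality it again contradicts the optimality assertion of \cite[Corollary 4]{ANR2024}, exactly as in Corollary \ref{cor:ratio_chiprime_2}. You instead re-derive the specialization by hand, instantiating Theorem \ref{thm:ratio_chiprime_reg} with $p(x)=(x-\theta'_i)(x-\theta'_{i-1})$ and computing $\lambda'(p)=0$, $W'(p)=\theta'_0+\theta'_i\theta'_{i-1}$, $p(\lambda'_1)=(\theta'_0-\theta'_i)(\theta'_0-\theta'_{i-1})$; these computations are accurate (the diagonal of $A'$ vanishes and the diagonal of $(A')^2$ is constantly $\Delta'=\theta'_0$ by regularity, and no eigenvalue lies strictly between the consecutive values $\theta'_i,\theta'_{i-1}$). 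More substantively, for optimality you do not cite the rounded optimality of \cite[Corollary 4]{ANR2024} but instead \emph{derive} it from the unrounded optimality of \cite[Theorem 3]{ANR2024} (equivalently Corollary \ref{cor:ratio_chiprime_2}), via the observation that $r\mapsto \abs{E}/\lfloor \abs{E}\,r\rfloor$ is weakly decreasing, so a strictly better rounded bound forces a strictly smaller ratio $r(p)$, contradicting minimality of $r$ over admissible polynomials; this monotonicity transfer is airtight (note $W'(p)-\lambda'(p)>0$ whenever $p(\lambda'_1)>\lambda'(p)$, by a trace argument, so all ratios are positive). What the paper's route buys is brevity and the offloading of edge cases to \cite{ANR2024}; what yours buys is an explicit proof that rounded optimality is a formal consequence of unrounded optimality, which the paper never spells out. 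The only caveat, inherited from the statement itself rather than introduced by you, is the implicit assumption that $i\geq 2$, i.e.\ $\theta'_{i-1}<\theta'_0$, so that the admissibility condition $p(\lambda'_1)>\lambda'(p)$ of Theorem \ref{thm:ratio_chiprime_reg} holds.
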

    
    \begin{proof}
        Lemma \ref{lem:LG_reg} tells us that $L(G)$ is $2(k-1)$-regular, meaning \cite[Corollary 4]{ANR2024} is applicable to $L(G)$. Finally, we use \eqref{eq:chrom_index_number_relation} to obtain the desired bound. The proof of optimality is analogous to that of Corollary \ref{cor:ratio_chiprime_2} but now using the optimality assertion from \cite[Corollary 4]{ANR2024}, and is thus omitted.
    \end{proof}
    
    \begin{corollary}\label{cor:ratio_chiprime_3}
        Let $G$ be a $k$-regular graph. Let $A'$ be the adjacency matrix of $L(G)$ and let $2(k-1) = \theta'_0 >  \cdots > \theta'_{d'} = -2$ be the distinct adjacency eigenvalues of $L(G)$, with $d' \geq 3$. Let $\theta'_s$ be the largest eigenvalue such that $\theta'_s \leq - \frac{\theta'^2_0 + \theta'_0\theta'_{d'}-\Delta'_3}{\theta'_0 (\theta'_{d'} + 1)}$, where $\Delta'_3 := \max_{e\in E} \{((A')^3)_{ee}\}$. Then,
            \begin{equation*}
                \chi_3'(G) \geq \frac{\abs{E}}{\left\lfloor\abs{E}\frac{\Delta'_3 - \theta'_0(\theta'_s + \theta'_{s-1} + \theta'_{d'}) - \theta'_s\theta'_{s-1} \theta'_{d'}}{(\theta'_0 - \theta'_s)(\theta'_0 - \theta'_{s-1})(\theta'_0 - \theta'_{d'})}\right\rfloor}.
            \end{equation*}
        No better bound on $\chi_3'(G)$ can be obtained by using Theorem \ref{thm:ratio_chiprime_reg}.
    \end{corollary}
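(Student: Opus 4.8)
The plan is to reuse the template of the two preceding corollaries: translate the statement about $\chi_3'(G)$ into one about $\chi_3(L(G))$ through \eqref{eq:chrom_index_number_relation}, exploit the regularity of $L(G)$ guaranteed by Lemma \ref{lem:LG_reg}, and then feed a single, explicitly chosen cubic into the regular Hoffman-type bound of Theorem \ref{thm:ratio_chiprime_reg}. The candidate dictated by the shape of the claimed expression is
\begin{equation*}
    p(x) = (x-\theta'_s)(x-\theta'_{s-1})(x-\theta'_{d'}),
\end{equation*}
a monic cubic whose three roots are the least eigenvalue $\theta'_{d'}$ together with the consecutive pair $\theta'_{s-1} > \theta'_s$.

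First I would verify that $p$ satisfies the hypotheses of Theorem \ref{thm:ratio_chiprime_reg} and reproduces the claimed quantities. Because $L(G)$ is $2(k-1)$-regular, the diagonal of $A'$ vanishes and every diagonal entry of $(A')^2$ equals $\theta'_0 = 2(k-1)$; expanding $p(A')$ and reading off an $(e,e)$-entry leaves only the cubic and constant contributions, so that
\begin{equation*}
    (p(A'))_{ee} = ((A')^3)_{ee} - \theta'_0(\theta'_s+\theta'_{s-1}+\theta'_{d'}) - \theta'_s\theta'_{s-1}\theta'_{d'}.
\end{equation*}
Maximising over $e$ returns exactly the numerator $W'(p) = \Delta'_3 - \theta'_0(\theta'_s+\theta'_{s-1}+\theta'_{d'}) - \theta'_s\theta'_{s-1}\theta'_{d'}$, while $p(\theta'_0) = (\theta'_0-\theta'_s)(\theta'_0-\theta'_{s-1})(\theta'_0-\theta'_{d'})$ is the denominator. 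I would then check that $\lambda'(p) = 0$: since the smallest root coincides with the least eigenvalue and the other two roots are consecutive eigenvalues, the sign pattern of a monic cubic gives $p \geq 0$ at every eigenvalue with equality exactly at the three roots, because the negative interval $(-\infty,\theta'_{d'})$ lies below the spectrum and the negative interval $(\theta'_s,\theta'_{s-1})$ contains no eigenvalue. As $\theta'_0$ exceeds all three roots we also have $p(\theta'_0) > 0 = \lambda'(p)$, so Theorem \ref{thm:ratio_chiprime_reg} applies and, after substituting back via \eqref{eq:chrom_index_number_relation}, yields precisely the stated inequality.

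It remains to establish the optimality clause, which I expect to be the crux. Mirroring the argument for Corollaries \ref{cor:ratio_chiprime_2} and \ref{cor:ratio_chiprime_2_reg}, I would transport the question to $\chi_3(L(G))$ and invoke the optimality assertion of the distance-$3$ Hoffman-type bound of \cite{ANR2024}: a cubic beating the one above would, through \eqref{eq:chrom_index_number_relation}, beat the optimal cubic there, a contradiction. To see where the roots come from, one minimises the ratio $\bigl(W'(p)-\lambda'(p)\bigr)/\bigl(p(\theta'_0)-\lambda'(p)\bigr)$, invariant under positive affine transformations of $p$, over monic cubics that are nonnegative on the spectrum. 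A first-order computation shows that, once the leftmost root is anchored at the least eigenvalue $\theta'_{d'}$, the ratio becomes stationary when a free root equals $-\bigl(\theta'^2_0+\theta'_0\theta'_{d'}-\Delta'_3\bigr)/\bigl(\theta'_0(\theta'_{d'}+1)\bigr)$; since admissible roots must lie at eigenvalues, the optimum is realised by the consecutive pair whose gap straddles this critical value, that is, by taking $\theta'_s$ to be the largest eigenvalue not exceeding it. The genuine obstacle is justifying the structure of the optimiser, namely that the leftmost root is forced down to $\theta'_{d'}$, that nonnegativity on the spectrum allows only a consecutive eigenvalue pair to be active, and that the stationary configuration is a true minimum rather than a saddle; the algebraic identities for $W'(p)$ and $p(\theta'_0)$ are, by contrast, routine once the regularity of $L(G)$ is in hand.
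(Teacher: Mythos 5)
Your proposal is correct and takes essentially the same route as the paper: the paper's proof applies \cite[Corollary 5]{ANR2024} to $L(G)$ (justified by Lemma \ref{lem:LG_reg}) and translates via \eqref{eq:chrom_index_number_relation}, with the optimality clause inherited from the optimality assertion of \cite[Corollary 5]{ANR2024} by the same contradiction argument as in Corollary \ref{cor:ratio_chiprime_2}. The only difference is presentational: you unpack that citation by exhibiting the optimal cubic $p(x)=(x-\theta'_s)(x-\theta'_{s-1})(x-\theta'_{d'})$ and verifying $W'(p)$, $\lambda'(p)=0$ and $p(\theta'_0)$ directly, and your stationarity sketch for the location of $\theta'_s$ corresponds to \cite[Theorem 11]{kavi_optimal_2023}, which the paper explicitly notes underlies the cited result.
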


    \begin{proof}
       While we do not explicitly use it, we note that the proof of this corollary heavily relies on \cite[Theorem 11]{kavi_optimal_2023}.  
       Lemma \ref{lem:LG_reg} tells us that $L(G)$ is $2(k-1)$-regular, meaning \cite[Corollary 5]{ANR2024} is applicable to $L(G)$. Finally, we apply \eqref{eq:chrom_index_number_relation} to obtain the desired bound. The proof of optimality is analogous to that of Corollary \ref{cor:ratio_chiprime_2} but now using the optimality assertion of \cite[Corollary 5]{ANR2024}, and is thus omitted.
    \end{proof}

\raggedright
\section{Tightness and performance of the lower bounds} \label{sec:chiprime_performance}
\justifying
In this section we study how the bounds from Section \ref{sec:spec_chiprimet} perform. First, we present several graph families for which the Hoffman-type bounds on $\chi_t'(G)$ are sharp; these are infinite families of balanced bipartite products of cycles. Then we investigate how the bounds perform computationally, comparing them to each other, as well as to the true parameter value (see also Appendix).

For general $t$, it is hard to find the best possible Hoffman-type bound. Thus, in what follows, we focus on showing tightness for the Hoffman-type bound when $t=2,3$, and we do so by using the closed expressions that we derived in Section \ref{sec:spec_chiprimet}.

\subsection{Balanced bipartite product}

Let us first introduce the graph construction which we  use in this section. Let $G_1 = (V_1 \cup U_1, E_1), G_2 = (V_2 \cup U_2, E_2)$ be two \emph{balanced bipartite graphs}, that is, $\abs{U_i} = \abs{V_i}$ and $E_i \subset U_i \times V_i$ for $i=1,2$. Now consider a fixed vertex ordering on these graphs: $V_i = (v_1^i,\hdots,v_{n_i}^i), U_i = (u_1^i,\hdots, u_{n_i}^i)$. In our case we will only consider matching orderings (i.e.\ $(v_j^i,u_j^i) \in E_i$ for all $j$). The \emph{balanced bipartite product} of $G_1$ and $G_2$, denoted by $G_1 \bowtie G_2$ was introduced in \cite{kang_distance_2017}. It is defined as follows. Let $H = G_1 \bowtie G_2 = (V,E)$, then
\begin{align*}
    V := & \ V_1 \times V_2 \cup U_1 \times U_2, \\
    E := & \ \{(v_j^1,v^2)(u_j^1,u^2): j \in \{1,\hdots,n_1\},(v^2,u^2) \in E_2\} \cup\\
    & \ \{(v^1,v_j^2)(u^1,u_j^2): j \in \{1,\hdots,n_2\},(v^1,u^1) \in E_1\}.
\end{align*}
See Figure \ref{fig:balbiprod_example} for a small example of $C_4 \bowtie C_4$.
\begin{figure}
        \centering
        \subfloat{\begin{tikzpicture}
            \node[fill,circle, label=\textcolor{red}{$v_1^1$}] (v1) at (0,2){};
            \node[fill,circle, label=below:{\textcolor{blue}{$v_2^1$}}] (v2) at (2,0){};
            \node[fill,circle, label=below:{\textcolor{Dandelion}{$u_2^1$}}] (u2) at (0,0){};
            \node[fill,circle, label=\textcolor{Green}{$u_1^1$}] (u1) at (2,2){};
            \node[fill,circle, color=white] (X) at (0,-1.5){};
            
            \draw (u1)--(v1)--(u2)--(v2)--(u1);
        \end{tikzpicture}} \put(-5,73){$\bowtie$}
        \subfloat{\begin{tikzpicture}
            \node[fill,circle, label=\textcolor{red}{$v_1^2$}] (v1) at (0,2){};
            \node[fill,circle, label=below:{\textcolor{blue}{$v_2^2$}}] (v2) at (2,0){};
            \node[fill,circle, label=below:{\textcolor{Dandelion}{$u_2^2$}}] (u2) at (0,0){};
            \node[fill,circle, label=\textcolor{Green}{$u_1^2$}] (u1) at (2,2){};
            \node[fill,circle, color=white] (X) at (0,-1.5){};
            \draw(u1)--(v1)--(u2)--(v2)--(u1);
        \end{tikzpicture}} \put(0,73){$=$}
        \subfloat{\begin{tikzpicture}
            \node[fill,circle, label=below:{$(\textcolor{Dandelion}{u_2^1},\textcolor{Dandelion}{u_2^2})$}] (u1) at (0,0){};
            \node[fill,circle, label=below:{$(\textcolor{blue}{v_2^1},\textcolor{blue}{v_2^2})$}] (u2) at (3,0){};
            \node[fill,circle, label={$(\textcolor{Dandelion}{u_2^1},\textcolor{Green}{u_1^2})$}] (u3) at (3,3){};
            \node[fill,circle, label={$(\textcolor{blue}{v_2^1},\textcolor{red}{v_1^2})$}] (u4) at (0,3){};
            \node[fill,circle, label=below:{$(\textcolor{red}{v_1^1},\textcolor{blue}{v_2^2})$}] (u5) at (1.5,1.5){};
            \node[fill,circle, label=below:{$(\textcolor{Green}{u_1^1},\textcolor{Dandelion}{u_2^2})$}] (u6) at (4.5,1.5){};
            \node[fill,circle, label={$(\textcolor{red}{v_1^1},\textcolor{red}{v_1^2})$}] (u7) at (4.5,4.5){};
            \node[fill,circle, label={$(\textcolor{Green}{u_1^1},\textcolor{Green}{u_1^2})$}] (u8) at (1.5,4.5){};
            \draw (u1)--(u2)--(u6)--(u5)--(u1);
            \draw (u3)--(u4)--(u8)--(u7)--(u3);
            \draw (u1)--(u4);
            \draw (u2)--(u3);
            \draw (u5)--(u8);
            \draw (u6)--(u7);
        \end{tikzpicture}}
        \caption{The balanced bipartite product $C_4 \bowtie C_4 = Q_3$.}
        \label{fig:balbiprod_example}
    \end{figure}
Balanced bipartite products are very well structured, see \cite{kang_distance_2017} for more details, where their structure was used to determine the asymptotic supremum of $\chi_t(G)$ and $\chi_t'(G)$ over all $C_\ell$-free graphs. Of particular interest to us, is what their adjacency matrix looks like. The following result tells us exactly this.

\begin{proposition}\label{prop:balbiprod_adjacency}
    Let $G_1,G_2$ be two balanced bipartite graphs with $2r,2r'$ vertices respectively, and adjacency matrices $$\begin{pmatrix}
        O & A \\
        A^\top & O
    \end{pmatrix}, \begin{pmatrix}
        O & B \\
        B^\top & O
    \end{pmatrix}$$ respectively. Let $I$ be the identity matrix of size $2r$. Then $H= G_1 \bowtie G_2$ has adjacency matrix
    \begin{equation*}
        \tilde{A} = \begin{pmatrix*}
            O & A & O & B_{12} I &  \cdots & O & B_{1r'}I \\
            A^\top & O &  B_{21}I & O & \cdots & B_{r'1} & O \\
            O & B_{21} I & O & A & \cdots & O & B_{2r'} \\
            B_{12}I & O & A^\top & O  & \cdots & B_{r'2} & O \\
            \vdots & \vdots & \vdots & \vdots & \ddots & \vdots & \vdots \\
            O & B_{r'1} I & O & B_{r'2} I & \cdots & O & A \\
            B_{1r'} I & O & B_{2r'} I & O & \cdots & A^\top & O
        \end{pmatrix*}.
    \end{equation*}
\end{proposition}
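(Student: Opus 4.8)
The plan is to pick an explicit vertex ordering of $H=G_1\bowtie G_2$, block the adjacency matrix according to it, and then read each block off the definition of $E$ by a short case distinction. Since $V(H)=V_1\times V_2\cup U_1\times U_2$ has $2rr'$ vertices, I would group them into $2r'$ blocks of size $r$, indexed by the vertices of $G_2$ listed in the interleaved matching order $v_1^2,u_1^2,v_2^2,u_2^2,\dots,v_{r'}^2,u_{r'}^2$. The block attached to $v_i^2$ is $P_i:=\{(v_1^1,v_i^2),\dots,(v_r^1,v_i^2)\}$ and the one attached to $u_i^2$ is $Q_i:=\{(u_1^1,u_i^2),\dots,(u_r^1,u_i^2)\}$, each ordered by its $G_1$-coordinate. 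With this ordering $\tilde A$ is a $2r'\times 2r'$ array of $r\times r$ blocks, and the consecutive pair $(P_i,Q_i)$ coming from the matched pair $(v_i^2,u_i^2)$ forms the $i$-th diagonal super-block of order $2r$. I would then show that this super-block equals $\bigl(\begin{smallmatrix}O&A\\A^\top&O\end{smallmatrix}\bigr)$ on the diagonal and $\bigl(\begin{smallmatrix}O&B_{ij}I\\B_{ji}I&O\end{smallmatrix}\bigr)$ off it, where each $I$ is the order-$r$ identity.

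The key observation to record first is that every edge of $H$, of either type in the definition of $E$, joins a vertex whose $G_2$-coordinate lies in $V_2$ to one whose $G_2$-coordinate lies in $U_2$ (and similarly for the $G_1$-coordinates). Consequently any block $\tilde A[x,y]$ with $x,y$ on the same side of $G_2$, that is $P_i\to P_j$ or $Q_i\to Q_j$, is the zero matrix. It remains to compute the block $P_i\to Q_j$, the block $Q_i\to P_j$ being its transpose. Here I would test when $(v_a^1,v_i^2)$ is adjacent to $(u_b^1,u_j^2)$ against the two edge families. The first family contributes this pair exactly when $a=b$ and $(v_i^2,u_j^2)\in E_2$, i.e.\ when $B_{ij}=1$, giving the summand $B_{ij}I$. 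The second family contributes it exactly when the two $G_2$-coordinates already coincide, forcing $i=j$, together with $(v_a^1,u_b^1)\in E_1$, i.e.\ $A_{ab}=1$; this gives the summand $A$ on diagonal blocks only.

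Assembling the two contributions then yields the claimed shape. Off the diagonal ($i\neq j$) only the first family survives and the block is $B_{ij}I$, so after computing the transpose block $Q_i\to P_j=B_{ji}I$ the super-block is $\bigl(\begin{smallmatrix}O&B_{ij}I\\B_{ji}I&O\end{smallmatrix}\bigr)$. On the diagonal ($i=j$) the first family contributes the entries with $a=b$ (using $(v_i^2,u_i^2)\in E_2$, which holds by the matching ordering of $G_2$) and the second contributes all entries with $A_{ab}=1$; since the matching ordering of $G_1$ forces $A_{aa}=1$, these diagonal pairs are already present in $A$ and the block is exactly $A$, giving the diagonal super-block $\bigl(\begin{smallmatrix}O&A\\A^\top&O\end{smallmatrix}\bigr)$. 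Reassembling all $2r'\times 2r'$ blocks in the chosen order reproduces the matrix $\tilde A$.

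I expect the only real obstacle to be this last diagonal computation. The natural worry is that the matched edges of $G_2$ (through the first family) and the edges of $G_1$ (through the second family) add an extra identity, producing $A+I$ rather than $A$. The resolution is that $H$ is a simple graph, so the two families do not add but merely agree on the overlapping diagonal pairs, and the matching assumption on $G_1$ guarantees that these pairs are already recorded as the ones on the diagonal of $A$. Apart from this, the task is purely bookkeeping: keeping the two coordinates and the two edge families apart so that the transposes and the distinction between $B_{ij}$ and $B_{ji}$ land in the correct off-diagonal positions. Everything else is a direct verification against the definition of $E$.
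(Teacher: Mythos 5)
Your proposal is correct and takes essentially the same route as the paper's proof: the same interleaved block ordering $\tilde V_1,\tilde U_1,\tilde V_2,\dots$ by second coordinate, the same observation that blocks between same-side vertex sets vanish, and the same case split between the two edge families yielding $B_{ij}I$ off the diagonal and $A$ on it. Your explicit resolution of the potential ``$A+I$'' overlap on the diagonal blocks, using that the matching ordering of $G_1$ forces $A_{aa}=1$, is a point the paper handles only implicitly (by asserting that $\tilde V_i\cup\tilde U_i$ induces $G_1$), so it is a welcome clarification rather than a different argument.
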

\begin{proof}
    Let $G_1 = (V_1 \cup U_1, E_1)$ and $G_2 = (V_2 \cup U_2, E_2)$. First, we will order the vertices of $H$ such that the labeling agrees with $\tilde{A}$. Consider the subsets $\tilde{V}_j = \{(v_i^1,v_j^2): i \in [2r]\}, \tilde{U}_j = \{(u_i^1,u_j^2): i \in [2r]\}$. Inside these sets we use the induced ordering from the $v_i^1$ or $u_i^1$, and the sets themselves we order as $\tilde{V}_1,\tilde{U}_1,\tilde{V}_2,\hdots,\tilde{U}_{2r'}$. We can now see that these sets will correspond to the blocks in $\tilde{A}$. Since $G_1$ and $G_2$ are in matching order, $\tilde{V}_i \cup \tilde{U}_i$ induces the subgraph $G_1$, thus the diagonal of $\tilde{A}$ is indeed $\begin{pmatrix}
        O & A \\
        A^\top & O
    \end{pmatrix}$. Additionally, for any $i \in [2r], j \in [2r']$, $\tilde{V}_i \cup \tilde{V}_j$ induces the empty graph, the same holds for $\tilde{U}_i, \tilde{U}_j$. This corresponds to the $O$ blocks in $\tilde{A}$.

    Now we just need to consider the adjacency in the blocks corresponding to $\tilde{V}_i \cup \tilde{U}_j$ for $i \neq j$. Fix such an $i$ and $j$ and let us consider a pair of vertices $(v_{\ell_1}^1,v_i^2) \in \tilde{V}_i$ and $(u_{\ell_2}^1,u_j^2) \in \tilde{U}_j$. Since we assumed $i \neq j$, we know by definition that these vertices are adjacent if and only if both $\ell_1=\ell_2$ and $(v_i^2,u_j^2) \in E_2$. Hence, all off-diagonal entries in this block are $0$, and furthermore, the diagonal is constant, and is $1$ if $B_{ij}=1$, and $0$ otherwise. In other words, this block is of the form $B_{ij}I$.
\end{proof}

While we will not be using it here, it is worth noting that it follows directly from Proposition \ref{prop:balbiprod_adjacency} that the adjacency eigenvalues of both $G_1$ and $G_2$ interlace the adjacency eigenvalues of $G_1 \bowtie G_2$ .

In the rest of this section, we will focus exclusively on graphs of the form $H = C_q \bowtie C_{q'}$. We will show that our Hoffman-type bounds for $\chi'_2(G)$ and $\chi'_3(G)$ from Section \ref{sec:spec_chiprimet} are tight for an infinite family of balanced bipartite products of cycles. In particular, in Section \ref{sec:chi2_tight}, we show that there is an infinite family of graphs $C_q \bowtie C_{q'}$ for which Corollary \ref{cor:ratio_chiprime_2_reg} is tight. In Section \ref{sec:chi3_tight}, we look at a special class of balanced bipartite products known as \emph{Guo-Mohar graphs}, $GM(k) = C_4 \bowtie C_{2k}$, and show that Corollary \ref{cor:ratio_chiprime_3} is tight for $GM(2k)$.

\subsubsection{An infinite family of graphs for which Corollary \ref{cor:ratio_chiprime_2_reg} is tight} \label{sec:chi2_tight}

The next result shows that Corollary \ref{cor:ratio_chiprime_2_reg} is tight for an infinite family of graphs $C_q \bowtie C_{q'}$, with $q,q' = 0 \text{ (mod }4)$. 

\begin{proposition}\label{prop:balbiprod_tight}
    Let $H = C_{q} \bowtie C_{q'}$ be a balanced bipartite product, with $q,q' = 0 \text{ (mod }4)$. Then the Hoffman-type bound for $\chi'_2(H)$ (Corollary \ref{cor:ratio_chiprime_2_reg}) is tight if either:
    \begin{enumerate}
        \item $-2$ is not an eigenvalue of $H$,
        \item $-2$ is an eigenvalue of $H$ and $qq' \neq 0 \text{ (mod }5)$.
    \end{enumerate}
\end{proposition}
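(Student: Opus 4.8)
The plan is to sandwich $\chi'_2(H)$ between the spectral lower bound of Corollary \ref{cor:ratio_chiprime_2_reg} and an explicit colouring, and to show that both equal $6$. First I would record the combinatorial data of $H=C_q\bowtie C_{q'}$. In the matching ordering the two edge families defining the product share, at each vertex, exactly the one matching edge; since both cycles are $2$-regular this gives every vertex degree $2+2-1=3$, so $H$ is cubic with $n=qq'/2$ vertices and $\abs{E}=3qq'/4$ edges. By Lemma \ref{lem:LG_reg} the line graph $L(H)$ is $4$-regular, so $\theta'_0=4$ in Corollary \ref{cor:ratio_chiprime_2_reg}; and since $\abs{E}>n$, Corollary \ref{cor:LG_spec} gives $-2\in\mathrm{spec}(L(H))$, so an eigenvalue $\theta'_i\le -1$ exists and the corollary applies.

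For the lower bound I would avoid computing the whole spectrum and instead use the identity, valid because $(\theta'_0-\theta'_i)(\theta'_0-\theta'_{i-1})>0$ and $\theta'_0=4$,
\[ \rho:=\frac{\theta'_0+\theta'_i\theta'_{i-1}}{(\theta'_0-\theta'_i)(\theta'_0-\theta'_{i-1})}<\tfrac15 \iff (1+\theta'_i)(1+\theta'_{i-1})<0, \]
with $\rho=\tfrac15$ exactly when this product is $0$. By definition $\theta'_i$ is the largest eigenvalue $\le -1$, so $\theta'_{i-1}>-1$ always, i.e. $1+\theta'_{i-1}>0$. By Corollary \ref{cor:LG_spec} the spectrum of $L(H)$ is $\{\lambda+1:\lambda\in\mathrm{spec}(H)\}$ together with extra copies of $-2$, hence $\theta'_i=-1\iff -1\in\mathrm{spec}(L(H))\iff -2\in\mathrm{spec}(H)$. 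Thus in case (1) ($-2\notin\mathrm{spec}(H)$) we have $\theta'_i<-1$, so $(1+\theta'_i)(1+\theta'_{i-1})<0$ and $\rho<\tfrac15$; in case (2) ($-2\in\mathrm{spec}(H)$) we have $\theta'_i=-1$ and $\rho=\tfrac15$. Since $\abs{E}/\lfloor\abs{E}\rho\rfloor>5$ is equivalent to $\lfloor\abs{E}\rho\rfloor<\abs{E}/5$, case (1) yields this at once, while case (2) yields it iff $5\nmid\abs{E}$; because $16\mid qq'$ one has $5\mid\abs{E}\iff 5\mid qq'$, so the hypothesis $5\nmid qq'$ is exactly what is needed. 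Hence under either hypothesis the bound strictly exceeds $5$, giving $\chi'_2(H)\ge 6$.

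For the matching upper bound I would exhibit a $6$-colouring directly. Index the edges of $H$ by a cell $(a,b)\in\mathbb{Z}_{q/2}\times\mathbb{Z}_{q'/2}$ (the coordinates of the $V$-endpoint) and a type $M,H,V$ (the three edges meeting each $V$-vertex). A short distance computation shows that two edges of the same type lie within distance $2$ precisely for the displacements $\{\pm(1,0),\pm(0,1)\}$ for $M$, $\{\pm(0,1),\pm(1,-1)\}$ for $H$, and $\{\pm(1,0),\pm(1,-1)\}$ for $V$. Colouring an edge by the pair $(\tau,\pi_\tau)$ with $\pi_M=(a+b)\bmod 2$, $\pi_H=b\bmod 2$, $\pi_V=a\bmod 2$ uses $6$ colours: each parity flips across every same-type displacement above, and edges of different type receive different colours automatically. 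This is where $4\mid q,q'$ is used, so that the parities are well defined on $\mathbb{Z}_{q/2},\mathbb{Z}_{q'/2}$ (equivalently, the three same-type conflict graphs are bipartite). Hence $\chi'_2(H)\le 6$, so $\chi'_2(H)=6$, and the bound — which lies in $(5,6]$ — rounds up to $6$, so Corollary \ref{cor:ratio_chiprime_2_reg} is tight.

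The main obstacle is pinning down the distance-$2$ conflict structure of the edges exactly, since the validity of the $6$-colouring rests on the same-type displacement sets being precisely those listed; once these are verified, the lower-bound half reduces to the one-line factorisation $(1+\theta'_i)(1+\theta'_{i-1})$ together with the arithmetic $5\mid\abs{E}\iff 5\mid qq'$.
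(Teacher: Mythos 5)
Your proposal is correct and arrives at the same sandwich as the paper ($\chi'_2(H)\le 6$ by an explicit colouring, and the Hoffman-type bound strictly exceeding $5$, hence $\ge 6$ by integrality), but both halves travel a genuinely different route. For the lower bound, the paper proves a reusable rounding lemma for $k$-regular graphs (Lemma \ref{lem:chi2_ratio_behaviour}, which needs the hypotheses $\theta_i\ge -2$ and $\theta_{i-1}\le 0$) and then verifies $\theta'_{i-1}\le 0$ for $L(H)$ by explicitly exhibiting $-1\in\operatorname{spec}(H)$ via an eigenvector (Lemma \ref{lem:balbiprod_-1_eig}, resting on Proposition \ref{prop:balbiprod_adjacency} and Corollary \ref{cor:balbiprod_eig_eq}); you instead exploit $\theta'_0=4$ to factor the threshold as $\rho<\tfrac15\iff(1+\theta'_i)(1+\theta'_{i-1})<0$ — which I checked: $5(4+ab)<(4-a)(4-b)$ reduces to $(1+a)(1+b)<0$ — and since $\theta'_{i-1}>-1$ by definition, this removes the need for $\theta'_{i-1}\le 0$ and hence for the eigenvector computation entirely; your translations $\theta'_i=-1\iff -2\in\operatorname{spec}(H)$ (via Corollary \ref{cor:LG_spec}) and $5\mid\abs{E}\iff 5\mid qq'$ are also correct, with $\abs{E}=3qq'/4$ (the paper's proof states $\abs{E}=\tfrac{3qq'}{2}$, apparently a typo; the mod-$5$ equivalence is unaffected). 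One step you should still patch: the equivalence ``$\abs{E}/\lfloor\abs{E}\rho\rfloor>5\iff\lfloor\abs{E}\rho\rfloor<\abs{E}/5$'' silently assumes $\lfloor\abs{E}\rho\rfloor\ge 1$ and $\theta'_{i-1}<\theta'_0$ (otherwise the bound's denominator is zero or negative); this does hold, either automatically — for $p(x)=(x-\theta'_i)(x-\theta'_{i-1})$ one has $p(A')\succeq 0$ with $\operatorname{tr}p(A')\ge p(\theta'_0)>0$ and constant diagonal, so $\abs{E}\,\rho\ge 1$ for any regular graph — or by importing the paper's Lemma \ref{lem:balbiprod_-1_eig}, which puts $0\in\operatorname{spec}(L(H))$ and hence $\theta'_{i-1}\in(-1,0]$; as written, your argument does not establish it. For the upper bound, your colouring is a reparametrization of the paper's Lemma \ref{lem:balbiprod_colour} (type times parity instead of explicit residues mod $6$), and your same-type conflict displacements ($\pm(1,0),\pm(0,1)$ for $M$; $\pm(0,1),\pm(1,-1)$ for $H$; $\pm(1,0),\pm(1,-1)$ for $V$) check out against the adjacency rule of the product, with $4\mid q,q'$ used exactly where you say (parities well defined on $\mathbb{Z}_{q/2}\times\mathbb{Z}_{q'/2}$); this is, if anything, more fully justified than the paper's colouring, which is asserted with a figure and no conflict verification. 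In sum: your route buys independence from the structural spectral lemmas at the cost of a well-definedness check on the bound's denominator; the paper's buys a lemma (Lemma \ref{lem:chi2_ratio_behaviour}) reusable beyond this family.
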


In order to prove Proposition \ref{prop:balbiprod_tight}, we need several preliminary results. First, let us show that a distance-$2$ edge colouring of $C_q \bowtie C_{q'}$ with $6$ colours exists. Henceforth, we will consider the vertices of $C_q$ to be ordered such that $(v_{i+1},u_i)$ is an edge for $i \in [1,\frac{q}{2}-1]$.

\begin{lemma}\label{lem:balbiprod_colour}
    Let $H = C_q \bowtie C_{q'}$ be a balanced bipartite product, with $q,q' = 0 \text{ (mod }4)$. Then $\chi'_2(H) \leq 6$.
\end{lemma}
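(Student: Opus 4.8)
We need to show that the balanced bipartite product $H = C_q \bowtie C_{q'}$, with $q, q' \equiv 0 \pmod 4$, admits a distance-$2$ edge-colouring using at most $6$ colours. By the relation \eqref{eq:chrom_index_number_relation}, this is a proper colouring of $L(H)^2$, i.e.\ an assignment of colours to the edges of $H$ so that any two edges within distance $2$ receive distinct colours. Distance $2$ between edges means: they are incident, or there is a single edge joining an endpoint of one to an endpoint of the other. So two edges clash exactly when they share a vertex or are joined by an edge.

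**Plan of attack.** The plan is to give an explicit, periodic colouring that exploits the highly regular block structure of $H$ described in Proposition \ref{prop:balbiprod_adjacency}. The key structural observation is that every edge of $H$ comes from one of the two ``copies'': edges of the first type $(v_j^1, v^2)(u_j^1, u^2)$ arise from an edge of $C_{q'}$ in the second coordinate (with the first coordinate a fixed matched pair $v_j^1 u_j^1$), and edges of the second type $(v^1, v_j^2)(u^1, u_j^2)$ arise from an edge of $C_q$ in the first coordinate. Since $H$ is $4$-regular (each vertex lies in two edges from each factor, as $C_q, C_{q'}$ are $2$-regular bipartite) the line graph $L(H)$ has degree $2\cdot 4 - 2 = 6$ by Lemma \ref{lem:LG_reg}; this makes $6$ colours a natural target. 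I would first set up coordinates using the cyclic orderings on $C_q$ and $C_{q'}$ (the excerpt already fixes the convention that $(v_{i+1}, u_i)$ is an edge), so that each edge of $H$ is indexed by a residue class, and then define the colour of an edge as an affine function of these indices reduced modulo $6$ (or as a pair of residues mod $2$ and mod $3$, since $6 = 2 \cdot 3$).

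**Key steps.** First I would parametrise the two edge-classes of $H$ by the cycle-positions in each factor and write down a candidate colour function $c(e)$ that is linear in those position-indices modulo $6$. Second, I would enumerate the possible ways two edges can be at distance $\le 2$: both from the first class, both from the second, or one of each; within each case the $\bowtie$-structure forces the two edges to agree or differ by a bounded amount in their indices. Third, for each adjacency type I would verify that the colour function separates the two edges, i.e.\ that the affine form never collapses two clashing edges to the same residue. The hypothesis $q, q' \equiv 0 \pmod 4$ should be exactly what guarantees the cyclic colour pattern ``wraps around'' consistently—when we return to position $0$ after $q$ steps, the colour assignment must remain conflict-free across the seam, and divisibility by $4$ ensures the period of the pattern divides the cycle length so no clash is introduced at the wrap-around.

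**Main obstacle.** The hard part will be the case analysis at the ``mixed'' adjacencies and, especially, at the cyclic seam. Away from the seam the colouring is just a shifted periodic pattern and conflict-freeness is a routine finite check; the genuine difficulty is confirming that edges near position $0$ and position $q-1$ (resp.\ $q'-1$) do not clash after the cycle closes up, which is precisely where the $\equiv 0 \pmod 4$ condition is used. I would therefore isolate the local conflict graph among edges within a distance-$2$ neighbourhood, show it has a fixed finite shape independent of the (large) cycle lengths, and then reduce the whole verification to checking that my explicit colour pattern is proper on this bounded local pattern together with its periodic wrap-around—an argument I expect to carry out by a short, explicit table rather than by any asymptotic estimate.
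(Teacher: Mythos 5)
Your overall strategy---an explicit colouring affine in the cycle positions modulo $6$, verified by a finite local check, with the hypothesis $q,q'\equiv 0 \pmod 4$ used to make the pattern close up at the seam---is exactly the route the paper takes; its entire proof consists of writing such a colouring down. But your set-up contains a concrete structural error that would derail the planned parametrisation and case analysis: $H$ is \emph{not} $4$-regular. The two edge classes in the definition of $\bowtie$ are not disjoint: under the matching ordering, a ``diagonal'' edge $(v_i^1,v_j^2)(u_i^1,u_j^2)$ satisfies \emph{both} defining conditions, so although each vertex lies on two edges from each class, these two pairs share the one diagonal edge, giving degree $3$ (indeed $C_4\bowtie C_4=Q_3$ is cubic, and the paper later invokes $3$-regularity of $H$ explicitly). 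Consequently your numerological motivation fails twice over: $L(H)$ is $(2\cdot 3-2)=4$-regular, not $6$-regular, and in any case a distance-$2$ edge colouring is a proper colouring of $L(H)^2$, whose degree is well above $6$, so the degree of $L(H)$ could never by itself make $6$ colours a sensible target. The real reason $6$ suffices is that the edge set of $H$ partitions into \emph{three} perfect matchings---the diagonal edges, the $C_q$-factor edges $(v_{i+1}^1,v_j^2)(u_i^1,u_j^2)$, and the $C_{q'}$-factor edges $(v_i^1,v_{j+1}^2)(u_i^1,u_j^2)$---each of which receives a private pair of colours $\{c,c+3\}$ alternating with a parity; a two-class decomposition misses this and would also double-count the diagonal edges.

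Beyond the structural slip, the proposal stops short of the actual content: no colour function is ever written down and no clash is ever checked, whereas for this lemma the explicit formula essentially \emph{is} the proof. The paper colours the three matchings by $3i+3j$, $1+3i$ and $2+3j \pmod 6$ respectively (with the wrap-around convention $v^1_{q/2+1}=v^1_1$ and $v^2_{q'/2+1}=v^2_1$); since the colour used within each matching is determined by the parity of $i+j$, of $i$, or of $j$, the hypothesis $q,q'\equiv 0\pmod 4$---i.e.\ $q/2$ and $q'/2$ even---is precisely what makes the parity alternation consistent across the seam, exactly as you anticipated. So your identification of where the mod-$4$ condition enters is correct, but until you repair the degree/partition error and actually exhibit and verify a colouring, there is no proof.
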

\begin{proof}
    We colour the edges using $6$ colours as follows:
    \begin{align*}
        c((v_i^1,v_j^2),(u_i^1,u_j^2)) &= 3i + 3j \text{ (mod }6)\\
        c((v_{i+1}^1,v_j^2),(u_i^1,u_j^2)) &= 1+3i \text{ (mod }6)\\
        c((v_i^1,v_{j+1}^2),(u_i^1,u_j^2)) &= 2+3j \text{ (mod }6)
    \end{align*}
    where we use abuse of notation $v^1_{\frac{q}{2}+1} = v_1^1$ and $v^2_{\frac{q'}{2}+1} = v_1^2$. See Figure \ref{fig:6_colour_q4} for an example.
\end{proof}

\begin{figure}[H]
    \centering
    \begin{tikzpicture}
        \node[fill,circle] (u1) at (-1,0){};
        \node[fill,circle] (u2) at (0,0){};
        \node[fill,circle] (u3) at (-1,-1){};
        \node[fill,circle] (u4) at (0,-1){};
        \node[fill,circle] (u5) at (-1,-2){};
        \node[fill,circle] (u6) at (0,-2){};
        \node[fill,circle] (u7) at (-1,-3){};
        \node[fill,circle] (u8) at (0,-3){};
        \node[fill,circle] (v1) at (3,0){};
        \node[fill,circle] (v2) at (4,0){};
        \node[fill,circle] (v3) at (3,-1){};
        \node[fill,circle] (v4) at (4,-1){};
        \node[fill,circle] (v5) at (3,-2){};
        \node[fill,circle] (v6) at (4,-2){};
        \node[fill,circle] (v7) at (3,-3){};
        \node[fill,circle] (v8) at (4,-3){};
        \node[fill] (W1) at (-4,-1.5){};
        \node[fill] (W2) at (7,-1.5){};
        \draw[thick, color=red] (u1)--(u2);
        \draw[thick, color=blue] (u2)--(u3);
        \draw[thick, color=green] (u3)--(u4);
        \draw[thick, color=orange] (u4)--(u5);
        \draw[thick, color=red] (u5)--(u6);
        \draw[thick, color=blue] (u6)--(u7);
        \draw[thick, color=green] (u7)--(u8);
        \draw[thick, color=orange] (u8) to[bend left=65] (u1);
        \draw[thick, color=green] (v1)--(v2);
        \draw[thick, color=blue] (v2)--(v3);
        \draw[thick, color=red] (v3)--(v4);
        \draw[thick, color=orange] (v4)--(v5);
        \draw[thick, color=green] (v5)--(v6);
        \draw[thick, color=blue] (v6)--(v7);
        \draw[thick, color=red] (v7)--(v8);
        \draw[thick, color=orange] (v8) to[bend left=65] (v1);
        \draw[thick, color=cyan] (u2)--(v1);
        \draw[thick, color=cyan] (u4)--(v3);
        \draw[thick, color=cyan] (u6)--(v5);
        \draw[thick, color=cyan] (u8)--(v7);
        \draw[thick, color=magenta] (W1)--(u1);
        \draw[thick, color=magenta] (W1)--(u3);
        \draw[thick, color=magenta] (W1)--(u5);
        \draw[thick, color=magenta] (W1)--(u7);
        \draw[thick, color=magenta] (W2)--(v2);
        \draw[thick, color=magenta] (W2)--(v4);
        \draw[thick, color=magenta] (W2)--(v6);
        \draw[thick, color=magenta] (W2)--(v8);
    \end{tikzpicture}
    \caption{An example of how to $6$-colour part of $C_q \bowtie C_{q'}$ when $q,q'$ are multiples of $4$. The square nodes represent the remainder of the graph}
    \label{fig:6_colour_q4}
\end{figure}

Now we will set out to show that our spectral lower bound (Corollary \ref{cor:ratio_chiprime_2_reg}) gives a bound of $\chi'_2(C_q \bowtie C_{q'}) \geq 6$, and is thus tight. For this, we will make use of a slight extension of \cite[Corollary 29]{ANR2024} on the behavior of Corollary \ref{cor:ratio_chiprime_2_reg}, where we now take into account intermediate rounding.
\begin{lemma}\label{lem:chi2_ratio_behaviour}
    Let $G$ be a $k$-regular graph with distinct eigenvalues $k=\theta_0 > \cdots > \theta_d$. Let $\theta_i$ be the largest eigenvalue such that $\theta_i \leq -1$. If $\theta_i \geq -2$ and $\theta_{i-1} \leq 0$, then the Hoffman-type bound tells us
    \begin{equation*}
        \chi_2(G) \geq
        \begin{cases}
            k+1, &\text{ if } \theta_i = -1 \text{ and } \abs{V} = 0 \text{ (mod }k+1),\\
            k+2, &\text{ else.}
        \end{cases} 
    \end{equation*}
\end{lemma}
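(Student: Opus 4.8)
The plan is to make the claim completely explicit by instantiating the Hoffman-type ratio bound (the vertex–colouring version underlying Corollary \ref{cor:ratio_chiprime_2_reg}, applied to $G$ itself) with $\theta_0 = k$. Writing $n = \abs{V}$ and
$$\rho := \frac{\theta_0 + \theta_i\theta_{i-1}}{(\theta_0-\theta_i)(\theta_0-\theta_{i-1})} = \frac{k+\theta_i\theta_{i-1}}{(k-\theta_i)(k-\theta_{i-1})},$$
the bound reads $\chi_2(G) \ge \tfrac{n}{\lfloor n\rho\rfloor}$, so everything reduces to locating $n\rho$ relative to consecutive integers. First I would record the sign information forced by the hypotheses. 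Since $\theta_i$ is the \emph{largest} eigenvalue that is $\le -1$, every eigenvalue above it exceeds $-1$; in particular $\theta_{i-1} > -1$, which together with $\theta_{i-1}\le 0$ and $-2 \le \theta_i \le -1$ gives $\theta_i + 1 \le 0$ and $\theta_{i-1}+1 > 0$. Moreover $\theta_i\theta_{i-1} \ge 0$, so the numerator of $\rho$ is $\ge k > 0$ and both denominator factors are positive.

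The crux, and the step I expect to be the main (if brief) obstacle, is comparing $\rho$ with $\tfrac{1}{k+1}$. I would do this through the exact factorisation
$$(k-\theta_i)(k-\theta_{i-1}) - (k+1)(k+\theta_i\theta_{i-1}) = -\,k\,(\theta_i+1)(\theta_{i-1}+1),$$
which one checks by expanding both sides. By the sign analysis above the right-hand side is $\ge 0$, hence $\rho \le \tfrac{1}{k+1}$, with equality precisely when $(\theta_i+1)(\theta_{i-1}+1)=0$, i.e.\ (as $\theta_{i-1}+1>0$) precisely when $\theta_i = -1$. This single identity cleanly separates the two regimes of the statement, so the rest is bookkeeping with the floor.

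Before handling the cases I would check the bound is well defined. The hypotheses force $G$ to have at least three distinct eigenvalues (as $\theta_0=k>0$, $\theta_{i-1}\le 0$, $\theta_i<\theta_{i-1}$), so $G\neq K_{k+1}$ and therefore $n \ge k+2$; since on the admissible range $\rho \ge \tfrac{1}{k+2}$ (its minimum, attained at $\theta_i=-2,\ \theta_{i-1}=0$), we get $n\rho \ge 1$ and $\lfloor n\rho\rfloor \ge 1$. Now the two cases. If $\theta_i < -1$, then $\rho < \tfrac{1}{k+1}$, so $\lfloor n\rho\rfloor \le n\rho < \tfrac{n}{k+1}$ and hence $\tfrac{n}{\lfloor n\rho\rfloor} > k+1$; integrality of $\chi_2(G)$ upgrades this to $\chi_2(G) \ge k+2$. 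If $\theta_i = -1$, then $\rho = \tfrac{1}{k+1}$ and the bound is $\tfrac{n}{\lfloor n/(k+1)\rfloor}$: when $(k+1)\mid n$ it equals exactly $k+1$, giving the first branch; otherwise $\lfloor n/(k+1)\rfloor < \tfrac{n}{k+1}$ pushes the bound strictly above $k+1$, so again $\chi_2(G)\ge k+2$. Collecting the cases yields precisely the claimed dichotomy.
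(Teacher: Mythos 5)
Your proposal is correct and takes essentially the same route as the paper: both instantiate the rounded Hoffman-type bound $\chi_2(G)\ge \abs{V}/\lfloor \abs{V}\rho\rfloor$ with $\rho=\frac{k+\theta_i\theta_{i-1}}{(k-\theta_i)(k-\theta_{i-1})}$, show $\rho\le\frac{1}{k+1}$ with equality exactly when $\theta_i=-1$, and conclude via the floor computation together with integrality of $\chi_2(G)$. The only difference is presentational: the paper delegates the strict inequality for $-2\le\theta_i<-1$ to \cite[Corollary 29]{ANR2024} and merely adds the rounding observation for $\theta_i=-1$, $\abs{V}\not\equiv 0 \pmod{k+1}$, whereas you verify everything self-contained via the (correct) identity $(k-\theta_i)(k-\theta_{i-1})-(k+1)(k+\theta_i\theta_{i-1})=-k(\theta_i+1)(\theta_{i-1}+1)$, and you additionally settle the well-definedness point $\lfloor \abs{V}\rho\rfloor\ge 1$ that the paper leaves implicit.
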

\begin{proof}
    The proof works mostly the same as that of \cite[Corollary 29]{ANR2024}, where we show that, if $-1>\theta_i \geq -2$ and $\theta_{i-1} \leq 0$, then the bound from Corollary \ref{cor:ratio_chiprime_2_reg} is at most $k+2$ and it is strictly more than $k+1$. 
    
    We strengthen this result, by using the observation that if $\theta_i = -1$ and $\abs{V} \neq 0 \text{ (mod }k+1)$, then
    \begin{equation*}
        \chi_2(G) \geq \frac{\abs{V}}{\left\lfloor\abs{V}\frac{k-\theta_{i-1}}{(k+1)(k-\theta_{i-1})}\right\rfloor} = \frac{\abs{V}}{\left\lfloor\frac{\abs{V}}{k+1}\right\rfloor} > k+1
    \end{equation*}
    and hence $\chi_2(G) \geq k+2$, since $\chi_2(G)$ is an integer.
\end{proof}

We will now investigate the spectrum of $L(C_q \bowtie C_{q'})$, and show that it has eigenvalues such that we can apply Lemma \ref{lem:chi2_ratio_behaviour}, allowing us to conclude the Hoffman-type bound will give a lower bound of $6$. 

As a direct result of Proposition \ref{prop:balbiprod_adjacency}, we have the following characterization of the eigenvalues of $C_{2r} \bowtie C_{2r'}$.

\begin{corollary}\label{cor:balbiprod_eig_eq}
    Let $C_{2r},C_{2r'}$ be two even cycles. Let $\begin{pmatrix}
        O & A \\
        A^\top & O
    \end{pmatrix}$ be the adjacency matrix of $C_{2r}$. Then $\lambda$ is an eigenvalue of $C_{2r} \bowtie C_{2r'}$ if and only if there exist vectors $u_i,v_i$ of length $r$ for $i \in [r']$, at least one of which is nonzero, which satisfy:
    \begin{align*}
        Au_i + u_{i+1} &= \lambda v_i \text{ for } i =1,\hdots, r'-1, \\
        Au_{r'} + u_1 &= \lambda v_{r'}, \\
        A^\top v_i + v_{i-1} &= \lambda u_i \text{ for } i =2, \hdots, r', \\
        A^\top v_1 + v_{r'} &= \lambda u_1.
    \end{align*}
\end{corollary}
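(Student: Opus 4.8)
The plan is to apply Proposition \ref{prop:balbiprod_adjacency} to $G_1 = C_{2r}$ and $G_2 = C_{2r'}$ and then simply transcribe the eigenvalue equation $\tilde A x = \lambda x$ block by block. First I would index the $2rr'$ coordinates of a candidate eigenvector $x$ exactly as in the block decomposition of the proposition: for each $i \in [r']$ let $v_i \in \mathbb{R}^r$ collect the entries of $x$ on $\tilde V_i = \{(v_\ell^1,v_i^2) : \ell \in [r]\}$ and $u_i \in \mathbb{R}^r$ the entries on $\tilde U_i$. Then $x$ corresponds to the tuple $(v_1,u_1,\dots,v_{r'},u_{r'})$ of $2r'$ blocks of length $r$, and $x \neq 0$ is precisely the requirement that at least one of the $u_i,v_i$ be nonzero, which is the genericity clause in the statement.

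Next I would specialise the off-diagonal blocks using the structure of $C_{2r'}$. The bipartite adjacency matrix $B$ of $C_{2r'}$ in matching order has $B_{ij}=1$ exactly when $j=i$ (the matching edge) and when $j$ is one fixed cyclic successor $j=i+1$ (indices mod $r'$), and $B_{ij}=0$ otherwise. By Proposition \ref{prop:balbiprod_adjacency} the block $(\tilde V_i,\tilde U_i)$ equals $A$ and the block $(\tilde V_i,\tilde U_j)$ for $j\neq i$ equals $B_{ij}I$, while every $(\tilde V_i,\tilde V_j)$ and $(\tilde U_i,\tilde U_j)$ block is $O$. Hence in the row-block $\tilde V_i$ only two column-blocks survive: $\tilde U_i$, contributing $A u_i$, and $\tilde U_{i+1}$, contributing $B_{i,i+1}I\,u_{i+1}=u_{i+1}$.

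Reading off $(\tilde A x)_{\tilde V_i}=\lambda v_i$ then gives $A u_i + u_{i+1} = \lambda v_i$ for $i<r'$, and the cyclic wrap $u_{r'+1}=u_1$ produces the separate boundary equation $A u_{r'} + u_1 = \lambda v_{r'}$. The symmetric computation on the row-blocks $\tilde U_i$, now using $(\tilde U_i,\tilde V_i)=A^\top$ and $(\tilde U_i,\tilde V_j)=B_{ji}I$, yields $A^\top v_i + v_{i-1} = \lambda u_i$ for $i\geq 2$ together with the wrap $A^\top v_1 + v_{r'} = \lambda u_1$. Since these four families of block identities are, coordinate by coordinate, literally the scalar equations comprising $\tilde A x = \lambda x$, the vector $x$ is an eigenvector for $\lambda$ if and only if the stated system holds with the $(u_i,v_i)$ not all zero, which is the claim.

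The step I expect to require the most care is the bookkeeping of the off-diagonal identity blocks: one must check that the cyclic structure of $B$ contributes exactly one shifted block in each equation, and fix the shift convention for $C_{2r'}$ so that the coupling lands on $u_{i+1}$ (respectively $v_{i-1}$) rather than on $u_{i-1}$, so as to match the stated wrap-around terms. The direction is only a labelling choice for the cycle, but it must be made consistently across the $\tilde V$- and $\tilde U$-equations. A secondary point worth one sentence is that $B_{ii}$ must not reappear as a spurious extra $+u_i$ term in the $\tilde V_i$ equation; this is guaranteed because Proposition \ref{prop:balbiprod_adjacency} already records the diagonal block as exactly $A$, the matching ordering having folded the diagonal of $B$ into it.
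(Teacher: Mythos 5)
Your proposal is correct and follows exactly the paper's route: the paper's proof likewise sets $\tilde{v} = (v_1,u_1,\hdots,v_{r'},u_{r'})$ and reads the four families of equations off $\tilde{A}\tilde{v} = \lambda\tilde{v}$ using the block form of $\tilde{A}$ from Proposition \ref{prop:balbiprod_adjacency}, with your version merely spelling out the bookkeeping (the cyclic bipartite matrix $B$ of $C_{2r'}$ contributing exactly one shifted identity block per row, and the diagonal $B_{ii}$ being absorbed into the block $A$ by the matching ordering) that the paper leaves implicit. Your care about the shift convention ($u_{i+1}$ versus $u_{i-1}$) is warranted but harmless, since reversing the cycle labelling is an isomorphism and the two resulting systems are equivalent.
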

\begin{proof}
    This follows directly from letting $\tilde{v} = (v_1,u_1,v_2,u_2,\hdots,v_{r'},u_{r'})$ and solving $\tilde{A}\tilde{v}$ using the expression for $\tilde{A}$ derived in Proposition \ref{prop:balbiprod_adjacency}.
\end{proof}

\begin{lemma}\label{lem:balbiprod_-1_eig}
     Let $C_{q},C_{q'}$ be two even cycles  with $q,q' = 0 \text{ (mod }4)$. Then $-1$ is an eigenvalue of $C_q \bowtie C_{q'}$.
\end{lemma}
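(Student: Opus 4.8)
The plan is to apply the eigenvalue characterisation of Corollary \ref{cor:balbiprod_eig_eq} and exhibit an explicit eigenvector for $\lambda = -1$. Writing $q = 2r$ and $q' = 2r'$, the hypothesis $q \equiv 0 \pmod 4$ means $r$ is even, and I will argue that this is precisely the feature producing the eigenvalue $-1$ (in fact only $q \equiv 0 \pmod 4$ seems to be needed, not both congruences, since the construction is asymmetric in the two cycles).

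First I would pin down the matrix $A$ in the block description. With the vertex ordering fixed just before Lemma \ref{lem:balbiprod_colour}, the edges of $C_{2r}$ are $(v_i,u_i)$, the edges $(v_{i+1},u_i)$ for $i\in[1,r-1]$, and the closing edge $(v_1,u_r)$, so that $A = I + C$, where $C$ is the $r\times r$ cyclic shift permutation matrix; consequently $A^\top = I + C^\top = I + C^{-1}$. The crucial observation is that, since $r$ is even, $-1$ is an eigenvalue of $C$ with explicit real eigenvector $x = (1,-1,1,\ldots,-1)^\top$; moreover $Cx=-x$ forces $C^{-1}x=-x$, so this single vector lies in the kernel of both $A$ and $A^\top$, i.e. $Ax = A^\top x = 0$.

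With such an $x$ in hand, I would set $u_i = x$ and $v_i = -x$ for every $i\in[r']$ (constant in the cycle index) and substitute $\lambda=-1$ into the four families of equations in Corollary \ref{cor:balbiprod_eig_eq}. Each collapses immediately: $Au_i + u_{i+1} = 0 + x = x = (-1)(-x) = \lambda v_i$, and $A^\top v_i + v_{i-1} = 0 - x = -x = (-1)x = \lambda u_i$, while the two boundary equations behave identically because the ansatz is independent of $i$. Since $x\neq 0$, the vector is nonzero, so $-1$ is an eigenvalue, as claimed.

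The only genuine work is bookkeeping: fixing the orientation of $A$ versus $A^\top$ from the chosen labelling, and checking that the wrap-around entry $A_{1,r}=1$ does not spoil $Ax=0$ at the boundary index (it does not, precisely because $r$ even makes the first and last entries of $x$ opposite in sign). A cleaner-to-state but heavier alternative, which I would keep in reserve, is to diagonalise everything via Fourier modes $u_i = \alpha^i x$, $v_i = \alpha^i y$ with $\alpha^{r'}=1$ and $x$ an eigenvector of $C$; this reduces the whole system to $\lambda^2 = \abs{1+\zeta^j+\alpha}^2$ and shows $\lambda=-1$ arises whenever $\zeta^j=-1$, recovering the same conclusion while also describing the full spectrum of $C_q\bowtie C_{q'}$. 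Since the direct construction is shorter, I would present that and invoke the Fourier computation only if the full spectrum is required elsewhere.
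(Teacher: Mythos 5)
Your proof is correct and takes essentially the same route as the paper: both exhibit the constant-in-$i$ ansatz $u_i = x$, $v_i = -x$ with the alternating vector $x = (1,-1,\hdots,1,-1)$, note that $Ax = A^\top x = 0$, and plug $\lambda = -1$ into the equations of Corollary \ref{cor:balbiprod_eig_eq}. Your additional bookkeeping---identifying $A = I + C$ with $C$ the cyclic shift, checking the wrap-around entry, and observing that only $q \equiv 0 \pmod 4$ (not both congruences) is needed for this eigenvector---is accurate detail that the paper leaves implicit.
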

\begin{proof}
    Let
    \begin{equation*}
    u_i = -v_i = (1,-1,\hdots,1,-1).    
    \end{equation*}
    Note that $Au_i = A^\top v_i = 0$, and hence, with $\lambda = -1$, the equations from Proposition \ref{cor:balbiprod_eig_eq} are satisfied, meaning $-1$ is indeed an eigenvalue.
\end{proof}

We now have all the preliminary results need to prove Proposition \ref{prop:balbiprod_tight}.

\begin{proof}[Proof of Proposition \ref{prop:balbiprod_tight}]
    Note that by Lemma \ref{lem:balbiprod_colour} we know a bound is tight if it tells us $\chi'_2(H) \geq 6$. By Corollary \ref{cor:LG_spec}, we can fully characterize the spectrum of $L(H)$ in terms of the spectrum of $H$. In this case in particular, if $H$ has ordered eigenvalues $\theta_0 > \cdots > \theta_d$, then $L(H)$ has eigenvalues $\theta_0' > \cdots > \theta'_d$, with $\theta'_i = \theta_i + 1$. (Using that $H$ is $3$-regular by \cite[Proposition 2.3]{kang_distance_2017}).

    Now we can use Lemma \ref{lem:balbiprod_-1_eig} in combination with the fact that $H$ is a cubic bipartite graph to find that $L(H)$ has eigenvalues in both the interval $[0,-1)$ and $[-1,-2]$. Thus Lemma \ref{lem:chi2_ratio_behaviour} applies to $L(H)$. Finally, we use the observation $\abs{E} =\frac{3qq'}{2}$.
\end{proof}

We should note that, computationally, Proposition \ref{prop:balbiprod_tight} seems to hold under similar conditions when we do not restrict $q,q' = 0 \text{ (mod }4)$, but we cannot show tightness in this case until a valid colouring with $6$ colours is constructed in general.
\subsubsection{An infinite family of graphs for which Corollary \ref{cor:ratio_chiprime_3} is tight} \label{sec:chi3_tight}

A special case of a balanced bipartite product of cycles, are the graphs $GM(k) = C_4 \bowtie C_{2k}$. These are so called \emph{Guo-Mohar graphs}, which were first introduced in \cite{guo_large_2014}. They recently came to prominence in \cite{guo_cubic_2026}, where they played a large role in the classification of all cubic graphs with no eigenvalues in the interval $(-1,1)$. In this section, we will prove Proposition \ref{prop:GM_tight}, which shows that Corollary \ref{cor:ratio_chiprime_3} gives a tight bound for the Guo-Mohar graphs $GM(2k)$.

\begin{proposition}\label{prop:GM_tight}
     The Hoffman-type bound for $\chi'_3(GM(2k))$ (Corollary \ref{cor:ratio_chiprime_3}) is tight for the Guo-Mohar graphs $GM(2k)$.
\end{proposition}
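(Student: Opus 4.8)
The plan is to mirror the structure of the $\chi_2'$ tightness proof (Proposition \ref{prop:balbiprod_tight}), establishing two halves that meet: an explicit colouring giving an upper bound on $\chi_3'(GM(2k))$, and a spectral computation showing that Corollary \ref{cor:ratio_chiprime_3} matches that upper bound. First I would pin down the exact target value. Since $GM(2k) = C_4 \bowtie C_{4k}$ is cubic (by \cite[Proposition 2.3]{kang_distance_2017}), its line graph $L(GM(2k))$ is $2(k-1)=4$-regular with $\theta_0' = 4$, and $\abs{E} = \tfrac{3\cdot 4\cdot 4k}{2} = 24k$. The distance-$3$ chromatic index of a cubic graph is governed by the size of a largest ball of radius $3$ in the line graph; I expect the target to be a small constant (the analogue of the ``$6$'' in the $t=2$ case), and the first task is to determine this constant precisely by an explicit periodic edge-colouring exploiting the highly regular block structure of $GM(2k)$ given by Proposition \ref{prop:balbiprod_adjacency}.

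Next I would carry out the spectral side. The key input is Corollary \ref{cor:LG_spec}: the spectrum of $L(GM(2k))$ is $\{\theta_j + 2\}$ together with copies of $-2$ (here using $k$-regularity with $k=3$, so the shift is $k-2=1$), and $-2$ is the minimal eigenvalue $\theta'_{d'}$, which is exactly what Corollary \ref{cor:ratio_chiprime_3} requires. I would therefore compute the adjacency spectrum of $GM(2k)=C_4 \bowtie C_{4k}$ itself. Using Corollary \ref{cor:balbiprod_eig_eq} with $C_4$ (so $r=1$, $A$ a $1\times 1$ block) reduces the eigenvalue equations to a small circulant-type system over the $C_{4k}$ indices, yielding a closed trigonometric description of the eigenvalues $\theta_j = f(\cos\tfrac{\pi \ell}{k})$ for integers $\ell$. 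With these in hand I would identify $\theta'_s$, $\theta'_{s-1}$, and compute $\Delta'_3 = \max_{e}((A')^3)_{ee}$, which for a walk-regular (vertex-transitive-like) line graph is simply the constant number of closed $3$-walks, expressible through the spectrum as $\tfrac{1}{\abs{E}}\sum_i (\lambda_i')^3$. Substituting into the displayed formula of Corollary \ref{cor:ratio_chiprime_3} and simplifying the floor should return the target constant.

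The two halves then combine: the colouring gives $\chi_3'(GM(2k)) \le c$ and the spectral bound gives $\chi_3'(GM(2k)) \ge c$, forcing equality and hence tightness. I would also need to verify the hypotheses of Corollary \ref{cor:ratio_chiprime_3} hold uniformly in $k$ — in particular that $d' \ge 3$ and that the inequality $\theta'_s \le -\tfrac{\theta_0'^2 + \theta_0'\theta'_{d'} - \Delta'_3}{\theta_0'(\theta'_{d'}+1)}$ selects the intended eigenvalue for all large $k$ — which may require a short monotonicity argument in $\ell$, analogous to the ``$\theta_i \ge -2$ and $\theta_{i-1}\le 0$'' bracketing used in Lemma \ref{lem:chi2_ratio_behaviour}.

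The main obstacle I anticipate is twofold. On the combinatorial side, constructing a provably valid distance-$3$ colouring for the whole infinite family $GM(2k)$ with the minimum number of colours is genuinely delicate: distance-$3$ conflicts reach two steps further than in the strong ($t=2$) case, so the periodic colouring pattern must avoid collisions across a larger neighbourhood, and the restriction to \emph{even} argument ($GM(2k)$ rather than all $GM(k)$) strongly suggests a parity condition in the colouring — I would expect the coprimality/residue constraints to be exactly what fails for odd $k$. On the spectral side, the arithmetic of $\Delta_3'$ together with the floor function must be controlled carefully so that intermediate rounding does not lose the sharp constant; getting the floor to evaluate exactly (rather than merely asymptotically) will likely need the explicit divisibility facts $\abs{E}=24k$ and the precise rational value of the ratio inside the floor. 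I would tackle the colouring first, since it fixes the target value that the spectral computation must then reproduce.
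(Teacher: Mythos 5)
Your plan follows the paper's proof essentially step for step: the paper first proves Lemma \ref{lem:balbiprod_colour_3}, an explicit $12$-colouring (and this colouring is indeed where the restriction to even argument $GM(2k)$ enters, exactly as you predicted), and then evaluates Corollary \ref{cor:ratio_chiprime_3} on $L(GM(2k))$, finding $\theta'_0 = 4$, $\theta'_{d'} = -2$, $\Delta'_3 = 4$, threshold value $1$, hence $\theta'_s = 0$ and $\theta'_{s-1} = 2$, which yields exactly $12$. Two points of comparison, one of which is a step you would need to repair.

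The step needing repair is your computation of $\Delta'_3$ as $\frac{1}{\abs{E}}\sum_i (\lambda'_i)^3$: this spectral moment is the \emph{average} diagonal entry of $(A')^3$ and equals the maximum $\Delta'_3$ only when that diagonal is constant, which you assert (``walk-regular, vertex-transitive-like'') but do not prove; if constancy failed, the moment would only lower-bound $\Delta'_3$ and could misplace the threshold defining $\theta'_s$. The paper sidesteps this entirely with a direct count: since $GM(2k)$ is triangle-free and cubic, every triangle of the line graph arises from a star at a vertex of degree $3$, so each edge lies in exactly one such triangle per endpoint, giving $((A')^3)_{ee} = 4$ for \emph{every} $e$ — constancy comes for free. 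Second, rather than re-deriving the spectrum of $C_4 \bowtie C_{4k}$ from Corollary \ref{cor:balbiprod_eig_eq} (where, incidentally, $C_4$ gives $r=2$ with a $2\times 2$ block $A$, not $r=1$), the paper simply cites Theorem \ref{thm:GM_spec}: the eigenvalues are $\pm 1$ together with $\pm\sqrt{5+4\cos(2\pi j/k)} \in \pm[1,3]$, with no eigenvalues in $(-1,1)$; after the line-graph shift $\lambda \mapsto \lambda + 1$ this gives precisely the gap $(0,2)$ with both endpoints attained, pinning $\theta'_s = 0$, $\theta'_{s-1} = 2$. Your circulant computation would recover the same facts, just with more work. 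Finally, a small arithmetic slip: $GM(2k) = C_4 \bowtie C_{4k}$ has $8k$ vertices (not $16k$), so $\abs{E} = 12k$ rather than $24k$; this is harmless here, since either value is divisible by $12$, the floor in Corollary \ref{cor:ratio_chiprime_3} evaluates exactly, and the paper's proof in fact just computes the unrounded ratio $\frac{4\cdot 2\cdot 6}{4} = 12$.
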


As seen in \cite{guo_cubic_2026}, the Guo-Mohar graphs are one of only two infinite families of cubic graphs with no eigenvalues in $(-1,1)$. It is this property of their spectrum that will allow us to show Corollary \ref{cor:ratio_chiprime_3} is tight for $GM(2k)$.
\begin{theorem}{(\cite[2.1 Theorem]{guo_large_2014})}\label{thm:GM_spec}
    For $k \geq 2$, the spectrum of $GM(k)$ consists of eigenvalues $\pm 1$, each with multiplicity $k$, and the values
    \begin{equation*}
        \pm \sqrt{5+4\cos(2\pi j/k)} 
    \end{equation*}
    for $0 \leq j \leq k-1$. In particular, $GM(k)$ has no eigenvalues in the range $(-1,1)$.
\end{theorem}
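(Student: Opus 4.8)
The plan is to apply the eigenvalue characterisation of balanced bipartite products of cycles (Corollary \ref{cor:balbiprod_eig_eq}) to $GM(k) = C_4 \bowtie C_{2k}$ and to diagonalise the resulting block system by a discrete Fourier transform over $\mathbb{Z}_k$. In the notation of Corollary \ref{cor:balbiprod_eig_eq} we have $C_{2r} = C_4$, so $r = 2$, and $C_{2r'} = C_{2k}$, so $r' = k$. Since $C_4 = K_{2,2}$, its biadjacency matrix in the matching ordering is the $2\times 2$ all-ones matrix $A = J$, which is symmetric, so $A = A^\top = J$. Hence $\lambda$ is an eigenvalue of $GM(k)$ if and only if there exist $u_1,\dots,u_k,v_1,\dots,v_k \in \mathbb{C}^2$, not all zero, with
\begin{align*}
    J u_i + u_{i+1} &= \lambda v_i, \\
    J v_i + v_{i-1} &= \lambda u_i,
\end{align*}
the indices read cyclically modulo $k$.

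First I would decouple the cyclic index. Because consecutive blocks are coupled by the identity, the system is circulant in $i$, so I substitute the Fourier ansatz $u_i = \omega^i a$, $v_i = \omega^i b$ for a fixed $k$-th root of unity $\omega = e^{2\pi\sqrt{-1}\,j/k}$ and fixed $a,b \in \mathbb{C}^2$. The two families of relations then collapse to the single pair
\begin{align*}
    (J + \omega I)\,a &= \lambda b, \\
    (J + \omega^{-1} I)\,b &= \lambda a.
\end{align*}
Eliminating $b$ yields $(J + \omega^{-1}I)(J + \omega I)\,a = \lambda^2 a$; this elimination is legitimate because $\lambda = 0$ is impossible, as it would force $-\omega$ to be an eigenvalue of $J$, i.e.\ $-\omega \in \{0,2\}$, which no unit-modulus $\omega$ satisfies. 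Expanding and using $J^2 = 2J$ together with $\omega + \omega^{-1} = 2\cos(2\pi j/k)$ gives
\begin{equation*}
    (J + \omega^{-1}I)(J + \omega I) = \bigl(2 + 2\cos(2\pi j/k)\bigr)\,J + I.
\end{equation*}

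The matrix $J$ has eigenvalue $2$ on $(1,1)^\top$ and eigenvalue $0$ on $(1,-1)^\top$. Feeding these into the displayed matrix gives $\lambda^2 = 5 + 4\cos(2\pi j/k)$ on the symmetric eigenvector and $\lambda^2 = 1$ on the antisymmetric one, that is, $\lambda = \pm\sqrt{5 + 4\cos(2\pi j/k)}$ or $\lambda = \pm 1$. Letting $j$ range over $\{0,\dots,k-1\}$ produces exactly $4k$ eigenvalues, matching $\abs{V(GM(k))} = 4k$; since the Fourier modes together with the two eigenvectors of $J$ form a basis, these are all of the eigenvalues counted with multiplicity, and $+1$ and $-1$ each arise once per $j$, hence each with multiplicity $k$. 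Finally $5 + 4\cos(2\pi j/k) \geq 5 - 4 = 1$ for every $j$, so every eigenvalue has absolute value at least $1$, which gives the last claim that $GM(k)$ has no eigenvalue in $(-1,1)$.

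I expect the genuinely delicate points to be bookkeeping rather than ideas: confirming that $A = J$ under the matching-ordering convention, verifying that the Fourier ansatz captures a complete eigenbasis so that the $4k$ values found really are the full spectrum, and disposing of the $\lambda = 0$ case that the elimination step implicitly assumes away. The algebraic core, reducing to $(2 + 2\cos\theta)J + I$ and reading off the eigenvalues of $J$, is short and self-checks against the vertex count $4k$.
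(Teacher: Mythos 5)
Your proof is correct, but note that the paper itself offers no proof at all here: Theorem \ref{thm:GM_spec} is imported verbatim from Guo and Mohar \cite{guo_large_2014}, so there is nothing internal to compare against. What you have done is give a self-contained derivation using the paper's own machinery — Corollary \ref{cor:balbiprod_eig_eq} (itself a consequence of Proposition \ref{prop:balbiprod_adjacency}) specialized to $C_4 \bowtie C_{2k}$, then a discrete Fourier transform over $\mathbb{Z}_k$ to decouple the circulant index. This is a natural extension of what the paper does in Lemma \ref{lem:balbiprod_-1_eig}, where the same block equations are used only to exhibit the single eigenvalue $-1$; your DFT ansatz upgrades that to the full spectrum in the special case $A = J$, and all your computations check out: $A=J$ under the matching ordering, $(J+\omega^{-1}I)(J+\omega I) = (2+2\cos(2\pi j/k))J + I$, the eigenvalues $5+4\cos(2\pi j/k)$ and $1$ of this matrix, and the exclusion of $\lambda = 0$ via $-\omega \notin \{0,2\}$. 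Two small points deserve tightening. First, the completeness claim ``the Fourier modes together with the two eigenvectors of $J$ form a basis'' is stated rather than argued; the clean way is to observe that the per-mode $4\times 4$ block $\left(\begin{smallmatrix} 0 & J+\omega^{-1}I \\ J+\omega I & 0 \end{smallmatrix}\right)$ is Hermitian (since $\bar\omega = \omega^{-1}$), hence has four independent eigenvectors for each $j$, and eigenvectors for distinct $j$ are orthogonal, giving $4k$ independent eigenvectors in total — alternatively, exhibit the eigenvectors $(a,\pm\lambda^{-1}(2+\omega)a)$ and $(a,\pm\omega a)$ explicitly, using $2+\omega \neq 0$. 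Second, your assertion that the signs pair up as $\pm\sqrt{5+4\cos\theta}$ and $\pm 1$ per mode needs a word when $k$ is even and $j = k/2$, where $\sqrt{5+4\cos\pi} = 1$ and the two families collide; there each of $\pm 1$ occurs with multiplicity two for that mode, which is exactly consistent with the theorem's own bookkeeping (which lists those values under the $\pm\sqrt{5+4\cos(2\pi j/k)}$ family in addition to the multiplicity-$k$ pair). With those glosses, your argument is a complete and correct replacement for the citation, and it has the virtue of keeping the paper self-contained and of sanity-checking against $GM(2) = Q_3$ and the vertex count $4k$.
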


Before we compute the value of our bound from Corollary \ref{cor:ratio_chiprime_3} for $GM(2k)$, we will first provide an upper bound on $\chi'_3(GM(2k))$ by colouring it using $12$ colours.
\begin{lemma}\label{lem:balbiprod_colour_3}
    Let $GM(2k)$ be a Guo-Mohar graph. Then $\chi'_3(GM(2k)) \leq 12$.
\end{lemma}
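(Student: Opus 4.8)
The goal is to show $\chi'_3(GM(2k)) \leq 12$ by constructing an explicit distance-$3$ edge-colouring of $GM(2k) = C_4 \bowtie C_{4k}$ using $12$ colours. This parallels the strategy used in Lemma \ref{lem:balbiprod_colour} for the distance-$2$ case, where a periodic colouring formula was written down in terms of the two cyclic coordinates $i$ and $j$ of each edge. The plan is to set up an analogous explicit formula, now working modulo $12$ rather than modulo $6$, and exploit the product structure so that the colour assigned to an edge is an affine function of the coordinate indices $i,j$ reduced modulo $12$.

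First I would fix the coordinate system on $GM(2k)$. Every edge arises in one of three families coming from the definition of the balanced bipartite product: the ``matching'' edges $(v_i^1,v_j^2)(u_i^1,u_j^2)$, the edges stepping in the first cycle $(v_{i+1}^1,v_j^2)(u_i^1,u_j^2)$, and the edges stepping in the second cycle $(v_i^1,v_{j+1}^2)(u_i^1,u_j^2)$, exactly as in Lemma \ref{lem:balbiprod_colour}. Since $C_4$ contributes only two index values to the first coordinate and $C_{4k}$ contributes $2k$ values to the second, I expect the colour to depend primarily on $j$ modulo some period (plausibly $12$ or a divisor of it) together with a small correction from $i$ and from which of the three edge families the edge belongs to. I would then write down three affine expressions, one per family, of the shape $c = a i + b j + c_0 \ (\mathrm{mod}\ 12)$ with coefficients chosen so that the three families occupy disjoint residue blocks and the congruence $4k \equiv 0$ in the second coordinate is compatible with the period (here using that the relevant index runs modulo $4k$, so I would want the period of the formula in $j$ to divide $4k$, which is why restricting to $GM(2k)$ rather than $GM(k)$ is helpful).

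The core verification is to check that no two edges within distance $3$ receive the same colour. I would enumerate, for a generic edge, all edges at distance at most $3$ from it. Because the graph is cubic and vertex-transitive-like in its local structure, the distance-$3$ neighbourhood of any edge has bounded size, so this reduces to a finite local check: list the $O(1)$ edges whose shortest connecting path has at most three vertices, express each of their colours via the affine formulas with the appropriate shifts in $i$ and $j$, and confirm every such colour differs from that of the central edge modulo $12$. The main obstacle will be the bookkeeping at the ``seams'' where the cyclic wrap-around occurs (the abuse of notation $v^1_{q/2+1}=v_1^1$ and its analogue for the second cycle) and where an edge of one family sits within distance $3$ of edges of a different family; these cross-family and boundary cases are where a naively chosen set of coefficients is most likely to produce a collision, so the coefficients $a,b,c_0$ must be tuned so that the three families' residue classes stay separated even after the shifts induced by taking a short path. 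I would therefore organise the check by the family of the central edge and, within each, by the family of the neighbouring edge, arguing that the difference of the two affine colour expressions is a nonzero residue modulo $12$ in every case. Once all local cases are cleared, the periodicity of the formulas guarantees the colouring is globally well defined, giving $\chi'_3(GM(2k)) \leq 12$.
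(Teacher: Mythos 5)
Your strategy coincides with the paper's: its proof of Lemma \ref{lem:balbiprod_colour_3} is exactly an explicit periodic colouring, affine in the coordinates modulo $12$, in the spirit of Lemma \ref{lem:balbiprod_colour}. But as written your proposal has a genuine gap: you never exhibit the coefficients, and producing a set of coefficients that actually works \emph{is} the content of the proof. Saying that $a,b,c_0$ ``must be tuned so that the three families' residue classes stay separated even after the shifts induced by taking a short path'' restates the problem rather than solving it; nothing in your argument establishes that such a tuning exists, and the finite local verification you describe cannot be carried out on unspecified formulas. For the record, the paper's choice is
\begin{align*}
    c((v_i^1,v_j^2),(u_i^1,u_j^2)) &= i + 6j \text{ (mod }12),\\
    c((v_2^1,v_j^2),(u_1^1,u_j^2)) &= 3 + 6j \text{ (mod }12),\\
    c((v_1^1,v_j^2),(u_2^1,u_j^2)) &= 4 + 6j \text{ (mod }12),\\
    c((v_i^1,v_{j+1}^2),(u_i^1,u_j^2)) &= 4+i+6j \text{ (mod }12),
\end{align*}
after which correctness is a bounded local check (the paper illustrates it with Figure \ref{fig:12_colour_GMk}).

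Two structural points in your plan would also need repair before the verification could go through. First, you posit three affine formulas, one per edge family, but since $C_4$ contributes only two index values, the edges stepping in the first cycle do not admit a single affine expression in $i$: the paper handles the two directions with two separate constant offsets ($3+6j$ and $4+6j$), giving four formulas, not three. Second, your divisibility condition is misstated: in $GM(2k) = C_4 \bowtie C_{4k}$ the second-coordinate index $j$ runs modulo $2k$ (the part size of $C_{4k}$), not modulo $4k$, so the requirement is that the $j$-period of the formula (here $2$) divide $2k$ --- which holds for every $k$ and is precisely what fails for $GM(m)$ with $m$ odd. Your instinct that restricting to $GM(2k)$ is what makes the wrap-around compatible is correct, but the condition ``period divides $4k$'' would wrongly admit periods such as $4$ when $k$ is odd.
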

\begin{proof}
    We colour the edges using $12$ colours as follows: 
        \begin{align*}
        c((v_i^1,v_j^2),(u_i^1,u_j^2)) &= i + 6j \text{ (mod }12)\\
        c((v_2^1,v_j^2),(u_1^1,u_j^2)) &= 3 + 6j \text{ (mod }12)\\
        c((v_1^1,v_j^2),(u_2^1,u_j^2)) &= 4 + 6j \text{ (mod }12)\\
        c((v_i^1,v_{j+1}^2),(u_i^1,u_j^2)) &= 4+i+6j \text{ (mod }12)
    \end{align*}
    where we use abuse of notation $v^2_{k+1} = v_1^2$. See Figure \ref{fig:12_colour_GMk} for an example (where dashed lines represent different colours, e.g. dashed red and red are distinct colours).
\end{proof}

\begin{figure}[H]
    \centering
    \begin{tikzpicture}
        \node[fill,circle] (u1) at (-1,0){};
        \node[fill,circle] (u2) at (0,0){};
        \node[fill,circle] (u3) at (-1,-1){};
        \node[fill,circle] (u4) at (0,-1){};
        \node[fill,circle] (v1) at (2,0){};
        \node[fill,circle] (v2) at (3,0){};
        \node[fill,circle] (v3) at (2,-1){};
        \node[fill,circle] (v4) at (3,-1){};
        \node[fill] (W1) at (-3,-.5){};
        \node[fill] (W2) at (5,-.5){};
        \draw[thick, color=red] (u1)--(u2);
        \draw[thick, color=blue] (u2)--(u3);
        \draw[thick, color=green] (u3)--(u4);
        \draw[thick, color=orange] (u4)--(u1);
        \draw[thick, dashed, color=red] (v1)--(v2);
        \draw[thick, dashed, color=orange] (v2)--(v3);
        \draw[thick, dashed, color=green] (v3)--(v4);
        \draw[thick, dashed, color=blue] (v4)--(v1);
        \draw[thick, color=cyan] (u2)--(v1);
        \draw[thick, dashed, color=cyan] (u4)--(v3);
        \draw[thick, color=magenta] (W1)--(u1);
        \draw[thick, dashed, color=magenta] (W1)--(u3);
        \draw[thick, color=magenta] (W2)--(v2);
        \draw[thick, dashed, color=magenta] (W2)--(v4);
    \end{tikzpicture}
    \caption{An example of how to $12$-colour part of $GM(2k)$. Dashed and non-dashed lines represent different colours. The square nodes represent the remainder of the graph.}
    \label{fig:12_colour_GMk}
\end{figure}

We are now ready to state the proof of Proposition \ref{prop:GM_tight}.

\begin{proof}[Proof of Proposition \ref{prop:GM_tight}]
    In order to obtain a bound from Corollary \ref{cor:ratio_chiprime_3}, we first need to calculate $\Delta'_3$. Recall, this is (twice) the number of triangles an edge $e \in E$ is in, in $L(GM(2k))$. Since $GM(2k)$ is triangle free, the only such triangles in $L(GM(2k))$ will be of the form $(u,v),(u,w),(u,x)$, i.e.\ pairs of vertices $\{w,x\}$ adjacent to one of the endpoints of the edge $(u,v)$. Since $GM(2k)$ is $3$-regular, each vertex has exactly one such pair, so one triangle. Thus for both vertices in an edge, we have one triangle, but since we need to count them twice (once for each direction it can be traversed), we have $\Delta'_3 = 4$.
    
    Next, by combining Theorem \ref{thm:GM_spec} with Lemma \ref{lem:LG_reg}, we find that $L(GM(2k))$ has largest eigenvalue $\theta'_0 = 4$, smallest eigenvalue $\theta'_d = -2$ and it has no eigenvalues in the range $(0,2)$, but does have $0$ and $2$ themselves as eigenvalues. Recall, that $\theta'_s$ is the largest eigenvalue $$\theta'_s \leq - \frac{\theta'^2_0 + \theta'_0\theta'_{d'}-\Delta'_3}{\theta'_0 (\theta'_{d'} + 1)} = -\frac{16-12-4}{-4} = 1.$$ But since $L(GM(2k))$ has no eigenvalues in $(0,2)$, we know $\theta'_s = 0$ and $\theta'_{s-1} = 2$.
    
    Plugging all this in gives us the bound:
    \begin{align*}
        \chi'_3(GM(2k)) &\geq \frac{(\theta'_0 - \theta'_s)(\theta'_0 - \theta'_{s-1})(\theta'_0 - \theta'_{d'})}{\Delta'_3 - \theta'_0(\theta'_s + \theta'_{s-1} + \theta'_{d'}) - \theta'_s\theta'_{s-1} \theta'_{d'}} \\
        &=\frac{(4 - 0)(4 - 2)(4+2)}{4-4(0+2-2)-0\cdot2\cdot (-2)} \\
        &= \frac{4\cdot 2\cdot 6}{4} = 12,
    \end{align*}
which we know is tight by Lemma \ref{lem:balbiprod_colour_3}.
\end{proof}

\subsection{Computational performance}
Next we investigate how the bounds perform for a larger variety of graphs which are built into Sagemath. The results can be found in the Appendix in Tables \ref{tab:bound_sim_2} and \ref{tab:bound_sim_3}. In Table \ref{tab:bound_sim_2} the performance of the Hoffman-type lower bound (Corollary \ref{cor:ratio_chiprime_2} /\ref{cor:ratio_chiprime_2_reg}) and the first inertial-type lower bound (Theorem \ref{thm:1st_inertial_chiprime}), as well as the induced Wilf upper bound (Corollary \ref{cor:wilf}) are compared to the exact value of $\chi'_2(G)$. Analogously, in Table \ref{tab:bound_sim_3} we compare the Hoffman-type lower bound (Corollary \ref{cor:ratio_chiprime_3} / MILP implementation of Theorem \ref{thm:ratio_chiprime}), the first inertial-type lower bound (Theorem \ref{thm:1st_inertial_chiprime}) and the induced Wilf upper bound (Corollary \ref{cor:wilf}) to the exact value of $\chi'_3(G)$.
    
The exact values for $\chi'_2(G)$ and $\chi'_3(G)$ were computed using SAT, by implementing the formulation for $\chi(G)$ from \cite[Section 2.2]{faber_sat_2024}, and using Kissat in Sagemath to solve it. In both Table \ref{tab:bound_sim_2} and Table \ref{tab:bound_sim_3}, if for some graph it takes more than 5 minutes to compute the exact parameter value or to solve the MILP or LP implementation of Theorem \ref{thm:1st_inertial_chiprime} or Theorem \ref{thm:ratio_chiprime} respectively, the corresponding entry in the table is denoted by ``time'' instead. Graph names are in boldface if one of our bounds is tight.  

While $\chi_t'(G)$ takes too long to compute for a sizable number of graphs, for those that were computable, we see that the bounds from Corollaries \ref{cor:ratio_chiprime_2_reg} and \ref{cor:ratio_chiprime_3} perform well. Indeed, for a fair few graphs we find that the bounds are tight, and for others it gets close to the exact parameter value. 

The MILP for first inertial-type bound has a more mixed performance. In Tables \ref{tab:bound_sim_2} and \ref{tab:bound_sim_3}, we see that the first inertial-type bound never outperforms the Hoffman-type bounds. However, we have found (computationally) numerous examples of smaller graphs (less than $10$ vertices), for which the first inertial-type bound is tight, whereas the Hoffman-type bounds are not. Take for instance $\chi'_2(P_4)$, where $P_4$ is the path graph on $4$ vertices. The first inertial-type bound gives a lower bound of $3$ (with polynomial $x^2 -\sqrt{2}x$), whereas the Hoffman-type bound only gives a lower bound of $2$.

\section{On the distance-\texorpdfstring{$t$}{t} Erdős-Nešetřil problem}\label{sec:erdos_nesetril}

    In 1985, Erdős and Nešetřil  asked what the best upper bound on $\chi_2'(G)$ is in terms of $\Delta$. This spawned the earlier seen Conjecture \ref{con:erdos_nesetril}.

    Much work has been done on finding upper bounds of the form $\chi'_2(G) \leq b_2\Delta^2$, where $b_2 \leq 2$ is a positive constant (or equivalently $\chi'_2(G) \leq (2-\varepsilon_2)\Delta^2$), for large enough $\Delta$. We already saw Erdős and Nešetřil conjectured $b_2$ could be as low as $\frac{5}{4}$, but even whether or not $b_2 <2$ was unknown. The only thing that was known was that $b_2$ could not be less than $\frac{5}{4}$. In 1997, Molloy and Reed \cite{molloy_bound_1997} managed to prove $b_2<2$ by using the probabilistic method to show $\chi'_2(G) \leq 1.998\Delta^2$. This bound was further tightened to $\chi'_2(G) \leq 1.93\Delta^2$ by Bruhn and Joos \cite{bruhn_stronger_2015}, and to $\chi'_2(G) \leq 1.772\Delta^2$ by Hurley et al.\ \cite{hurley_improved_2021}. Most recently, Kang \cite{Kang2024} announced that a bound of $\chi'_2(G) \leq 1.730\Delta^2$ was obtained, though this is not yet published. Additionally, much research has been done into $\chi_2'(G)$ for specific classes of graphs, see the introduction of \cite{wang_strong_2018} where some of this progress is laid out.

    For the general distance-$t$ chromatic index, a trivial upper bound is $\chi_t'(G) \leq 2 \sum_{j=1}^t(\Delta-1)^j+1$. Just like for the $t=2$ case, bounds of the form $\chi'_t(G) \leq b\Delta^t$ (or equivalently $\chi'_t(G) \leq (1-\varepsilon)\Delta^t)$ have received much attention, spurred on by the fact that $\chi_t'(G) = \Omega(\Delta^t)$ \cite{kang_distance_2012}. In 2014 Kaiser and Kang \cite{kaiser_distance-t_2014} provided the first such bound by using the probabilistic method, which showed that for large enough $\Delta$ we have $\chi_t'(G) \leq 1.99992\Delta^t$. In addition to this bound, Kaiser and Kang also derived some results on the behaviour of $\chi'_t(G)$ given $G$ has a certain girth. An improvement to the bound was recently made by Cambie et al.\ \cite{cambie_maximizing_2022}, where they showed that for large enough $\Delta$ we have $\chi_t'(G) \leq 1.941\Delta^t$.

    In this section, we apply our spectral approach to the Erdős-Nešetřil conjecture. First, we show why Wilf's bound (Theorem \ref{thm:wilf}) cannot be used to strengthen the known results. Then, we look at the Hoffman-type bound (Corollary \ref{cor:ratio_chiprime_2}), and derive some conditions on the spectrum that a potential counterexample from our approach would have to satisfy.
\subsection{Wilf's bound and the Erdős-Nešetřil conjecture}

    Here we look at a classical eigenvalue upper bound on the chromatic number of a graph (Wilf's bound) and how we can use it to obtain a bound on $\chi_t'(G)$. Then we investigate how this bound on $\chi_t'(G)$ performs for the distance-$t$ Erd\H{o}s-Nešetřil problem.
    


\begin{theorem}[Wilf's bound \cite{wilf_eigenvalues_1967}]\label{thm:wilf}
    Let $G$ be a graph with largest adjacency eigenvalue $\lambda_1$. Then
    $\chi(G) \leq 1 + \lambda_1$.
\end{theorem}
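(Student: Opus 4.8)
The plan is to combine the classical greedy (degeneracy) colouring bound with eigenvalue interlacing. First I would recall the standard fact that for any graph $G$ one has $\chi(G) \leq 1 + \max_{H} \delta(H)$, where the maximum is taken over all induced subgraphs $H$ of $G$ and $\delta(H)$ denotes the minimum degree of $H$. This is proved by an inductive greedy argument: repeatedly delete a vertex of minimum degree to obtain a vertex ordering, then colour the vertices in reverse order, at which point each vertex has at most $\max_H \delta(H)$ already-coloured neighbours and can therefore be assigned a colour not used among them.

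Next, let $H^*$ be an induced subgraph attaining this maximum, so that $\chi(G) \leq 1 + \delta(H^*)$. The point is then to bound $\delta(H^*)$ spectrally. Since the minimum degree is at most the average degree, and the average degree is exactly the Rayleigh quotient of the adjacency matrix $A(H^*)$ evaluated at the all-ones vector $\mathbf{1}$, I would write
\begin{equation*}
\delta(H^*) \leq \frac{\mathbf{1}^\top A(H^*)\, \mathbf{1}}{\mathbf{1}^\top \mathbf{1}} \leq \lambda_1(H^*),
\end{equation*}
where the last inequality is the variational (Rayleigh) characterisation of the largest eigenvalue.

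Finally, because $H^*$ is an induced subgraph of $G$, its adjacency matrix is a principal submatrix of $A$, so Cauchy interlacing yields $\lambda_1(H^*) \leq \lambda_1(G) = \lambda_1$. Chaining these inequalities gives
\begin{equation*}
\chi(G) \leq 1 + \delta(H^*) \leq 1 + \lambda_1(H^*) \leq 1 + \lambda_1,
\end{equation*}
as claimed.

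The only delicate point is the bridge between the two halves of the argument: the greedy bound is governed by the \emph{minimum} degree of a subgraph, whereas the spectral radius naturally controls the \emph{average} degree. The argument goes through precisely because minimum degree $\leq$ average degree holds on the specific witness $H^*$, and because interlacing lets us transfer the spectral bound from $H^*$ back to $G$. Everything else is routine, so I expect no serious obstacle.
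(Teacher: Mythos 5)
Your proof is correct: the chain $\chi(G) \leq 1 + \delta(H^*) \leq 1 + \lambda_1(H^*) \leq 1 + \lambda_1$ is valid, since minimum degree is at most average degree, the average degree is the Rayleigh quotient at $\mathbf{1}$, and Cauchy interlacing handles the passage from the principal submatrix $A(H^*)$ back to $A$. The paper itself gives no proof (the result is cited to Wilf), but the route it alludes to in Section \ref{sec:erdos_nesetril} is slightly different from yours: there the combinatorial ingredient is the notion of a \emph{critical} subgraph $H$, for which every vertex satisfies $d(v) \geq \chi(G) - 1$, so that $\chi(G) - 1 \leq \delta(H) \leq \lambda_1(H) \leq \lambda_1(G)$. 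Your degeneracy argument (repeatedly deleting a minimum-degree vertex and colouring greedily in reverse) replaces the critical-subgraph step and is, if anything, more self-contained, since it avoids the existence claim about critical subgraphs; the two are essentially equivalent reformulations of the same witness. The distinction is not idle in the context of this paper, however: Proposition \ref{prop:t-crit} shows precisely that the critical-subgraph property $d_t(v) \geq \chi_t(G) - 1$ fails for $t > 1$, which is the paper's stated obstruction to a distance-$t$ analogue of Wilf's bound, and the same obstruction blocks your degeneracy formulation equally (a vertex of small $d_t$-degree may still see many colours in $G^t$ after deletion-and-reinsertion, since distances in $G$ are not inherited by the subgraph ordering). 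So your proof is a fine and standard alternative for $t = 1$, but neither variant survives the generalisation the paper cares about.
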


Theorem \ref{thm:wilf} can be applied to $L(G)^t$ to readily obtain a bound on $\chi'_t(G)$ by using \eqref{eq:chrom_index_number_relation}.
\begin{corollary}\label{cor:wilf}
    Let $G$ be a graph, let $\tilde \lambda_1$ be the largest adjacency eigenvalue of $L(G)^t$. Then
       $\chi'_t(G) \leq 1 + \tilde \lambda_1$.
\end{corollary}

Rather than Corollary \ref{cor:wilf}, which uses the spectrum of $L(G)^t$, it would be desirable to obtain a Wilf-like bound which uses the spectrum of $L(G)$ itself,  as we did for the lower bounds in Section \ref{sec:spec_chiprimet}. However, below we show that one  runs into issues due to the distance $t$ analog of the so-called critical graphs. A graph $G$ is called \emph{critical} if any proper subgraph $H$ satisfies $\chi(G) > \chi(H)$. It is well known that for a critical graph $G$, all of its vertices satisfy $\chi(G) - d(v) \leq 1$. This property is exploited in the proof of Wilf's bound. If we want an analogous bound for $\chi_t(G)$, we would need this property to extend to the $t$-chromatic number. To this end, we call a graph \emph{$t$-critical} if any proper subgraph $H$ satisfies $\chi_t(G) > \chi_t(H)$, and furthermore, we denote by $d_t(v)$ the amount of vertices within distance $t$ of a vertex $v \in V(G)$. Analogous to the $t=1$ case, we would want a bound like $\chi_t(G) - d_t(v) \leq 1$ or similar to hold for $t$-critical graphs. However, this is not the case as the following result shows.

\begin{proposition}\label{prop:t-crit}
Let $t>1$ be a fixed integer. Then for any $M \in \mathbb{N}$, there exists a $t$-critical graph $G$ with vertex $v \in V(G)$ such that $\chi_t(G) - d_t(v) > M$. In other words, the quantity $\chi_t(G) - d_t(v)$ can be arbitrarily large for $t$-critical graphs.
\end{proposition}
\begin{proof}
Consider two paths $Q_1,Q_2$ of order $t$. Denote the vertices of these paths $u_1,\hdots,u_t$ and $v_1,\hdots,v_t$. Now connect both $u_1$ and $v_t$ to every vertex in a complete graph $K_n$, and connect both $u_t$ and $v_1$ to a lone vertex $w'$. Further, connect $u_i$ and $v_i$ to a path of order $t-1$, $Q'_i$. Denote the vertices of $Q'_i$ by $w_{i,1}, \hdots, w_{i,t-1}$, with $w_{i,1}$ connected to $u_i$. Call the resulting graph $G_t$. See the left-most graph in Figure \ref{fig:2-crit} for an example when $t=2$.

Let us first reason why $G_t$ is $t$-critical. For this, we will have to show that removing any vertex decreases the $t$-chromatic number. The way we do this, is by claiming certain types of vertices are within distance $t$ of each other, and thus must share a colour in the original graph.
\begin{claim}
    For all $i\neq j$, $d(u_i,v_j) \leq t + 1 - \abs{i-j} \leq t$. And if $i = j$, then $d(u_i,v_j) \leq t$. 
\end{claim}
\begin{proof}
    If $i=j$, then the path $(u_i, Q_i', v_i)$ is of length exactly $t$. Instead, if $i \neq j$, then consider the cycle of length $2t+2$: $(u_1,\hdots, u_t, w', v_1,\hdots,v_t, K_n,u_1)$. This gives us exactly the upper bound on the distances we want.
\end{proof}
\begin{claim}
    For all $i,j,\ell$, $d(u_i,w_{j,\ell}) \leq t$ and $d(v_i,w_{j,\ell}) \leq t$.
\end{claim}
\begin{proof}
    Using Claim 1, 
    \begin{align*}
        d(u_i,w_{j,\ell}) &\leq d(u_i,u_j) + d(u_j,w_{j,\ell}) < \abs{i-j} + \ell, \\
        d(u_i,w_{j,\ell}) &\leq d(u_i,v_j) + d(v_j,w_{j,\ell}) \leq t+1-\abs{i-j} + t-\ell.
    \end{align*}
    If $\abs{i-j}+\ell \leq t$, then the first inequality works, if $\abs{i+j}+\ell \geq t+1$, then the second inequality works. The inequality $d(v_i,w_{j,\ell}) \leq t$ can be proven analogously.
\end{proof}
\begin{claim}
    $w'$ is within distance $t$ of all vertices in $G_t$, except those in $K_n$. Similarly, the vertices in $K_n$ are within distance $t$ of all vertices in $G_t$, except $w'$.
\end{claim}
\begin{proof}
    We will show it for $w'$, the $K_n$ case works analogously. First, $d(w',u_i) \leq t$ and $d(w',v_i) \leq t$ follow from the cycle presented in the proof of Claim 1. For $d(w',w_{i,\ell})$, we find
    \begin{align*}
        d(w',w_{i,\ell}) &\leq d(w', u_i) + d(u_i, w_{i,\ell}) = i+\ell, \\
        d(w',w_{i,\ell}) &\leq d(w', v_i) + d(v_i, w_{i,\ell}) = t+1-i + t-\ell.
    \end{align*}
    If $i+l \leq t$, then the first inequality works, if $i+l \geq t+1$, the second works.
\end{proof}
Now let us consider an optimal colouring of $G_t$ using $\chi_t(G_t)$ colours. Claims 1 and 3 tell us $Q_1,Q_2$ and $K_n$ are all within distance $t$ and hence all vertices in $Q_1,Q_2$ and $K_n$ must have their own unique colour, and w.l.o.g. we can assume $w'$ shares its colour with a vertex in $K_n$. Now we will show that removing a vertex from $G_t$, must decrease the distance $t$-chromatic number. Also see Figures \ref{fig:3-crit} and \ref{fig:2-crit} for two small examples.
\begin{itemize}
    \item Suppose we remove a vertex of $Q_1$ or $Q_2$. Then, by Claims 1,2 and 3, this vertex was within distance $t$ of all other vertices of $G_t$, so we know no vertex had the same colour. Hence, by removing this vertex, we have used one fewer colour, thus the $t$-chromatic number has decreased.
    \item Suppose we remove a vertex of $K_n$. W.l.o.g. we can assume this vertex did not share its colour with $w'$ (otherwise we switch $w'$'s colour to match a different vertex in $K_n$). Then, by the same reasoning as in the previous case, we can colour the new subgraph using one fewer colour.
    \item Suppose we remove $w'$. Then $d(u_t,v_1) >t$ in the new graph, and hence $u_t$ and $v_1$ can share a colour, reducing the total amount of colours needed, and thus this subgraph has a smaller $t$-chromatic number.
    \item Suppose we remove a vertex of some $Q_i'$. Then $d(u_i,v_i)=t+1$, and hence $u_i$ and $v_i$ can share a colour. We once again find that this subgraph has a smaller $t$-chromatic number.
\end{itemize}
In conclusion, $G_t$ is $t$-critical. Finally, we note that the $d_t(w') = t(t+1)$, but $\chi_t(G_t) > n$, hence $\chi_t(G_t) - d_t(w')> n-t(t+1)$ can be arbitrarily large.
\end{proof}

\begin{figure}[H]
    \centering
    \begin{tikzpicture}
        \node[fill,circle, label=$u_1$] (u1) at (0,0){};
        \node[fill,circle, label=$u_2$] (u2) at (0,-2){};
        \node[fill,circle, label=$u_3$] (u3) at (0,-4){};
        \node[fill,circle, label=$v_1$] (v1) at (3,0){};
        \node[fill,circle, label=$v_2$] (v2) at (3,-2){};
        \node[fill,circle, label=$v_3$] (v3) at (3,-4){};
        \node[fill,circle, label=$w'$] (w') at (0,2){};
        \node[fill,circle, label=$w_{1,1}$] (w11) at (1,0){};
        \node[fill,circle, label=$w_{1,2}$] (w12) at (2,0){};
        \node[fill,circle, label=$w_{2,1}$] (w21) at (1,-2){};
        \node[fill,circle, label=$w_{2,2}$] (w22) at (2,-2){};
        \node[fill,circle, label=$w_{3,1}$] (w31) at (1,-4){};
        \node[fill,circle, label=$w_{3,2}$] (w32) at (2,-4){};
        \node[fill,label=$K_n$] (k1) at (3,2){};
        \draw (v1)--(v2)--(v3)--(w32)--(w31)--(u3)--(u2)--(u1)--(w11)--(w12)--(v1);
        \draw (u2)--(w21)--(w22)--(v2);
        \draw (w')  to[bend right=45] (u3);
        \draw (u1)--(k1);
        \draw (v1)--(w');
        \draw (k1) to[bend left=45] (v3);
    \end{tikzpicture}
    \caption{The graph $G_3$ as defined in the proof of Proposition \ref{prop:t-crit}.}
    \label{fig:3-crit}
\end{figure}

\begin{figure}[H]
    \centering
    \subfloat{\begin{tikzpicture}
        \node[fill,circle, label=$u_1$, color=red] (u1) at (0,0){};
        \node[fill,circle, label=$u_2$, color=blue] (u2) at (0,-2){};
        \node[fill,circle, label=$v_1$, color=orange] (v1) at (2,0){};
        \node[fill,circle, label=$v_2$, color=cyan] (v2) at (2,-2){};
        \node[fill,circle, label=$w'$, color=magenta] (w') at (0,2){};
        \node[fill,circle, label=$w_{1,1}$, color=green] (w1) at (1,0){};
        \node[fill,circle, label=$w_{2,1}$, color=green] (w2) at (1,-2){};
        \node[fill,label=$K_n$,color=magenta] (k1) at (2,2){};
        \draw (v1)--(v2)--(w2)--(u2)--(u1)--(w1)--(v1)--(w');
        \draw (w')  to[bend right=45] (u2);
        \draw (u1)--(k1);
        \draw (k1) to[bend left=45] (v2);
    \end{tikzpicture}}
    \subfloat{\begin{tikzpicture}
        \node[fill,circle, label=$u_1$, color=red] (u1) at (0,0){};
        \node[fill,circle, label=$u_2$, color=blue] (u2) at (0,-2){};
        \node[fill,circle, label=$v_1$, color=blue] (v1) at (2,0){};
        \node[fill,circle, label=$v_2$, color=cyan] (v2) at (2,-2){};
        \node[fill,circle, label=$w_{1,1}$, color=green] (w1) at (1,0){};
        \node[fill,circle, label=$w_{2,1}$, color=green] (w2) at (1,-2){};
        \node[fill,label=$K_n$,color=magenta] (k1) at (2,2){};
        \draw (v1)--(v2)--(w2)--(u2)--(u1)--(w1)--(v1);
        \draw (u1)--(k1);
        \draw (k1) to[bend left=45] (v2);
    \end{tikzpicture}}
        \subfloat{\begin{tikzpicture}
        \node[fill,circle, label=$u_1$, color=red] (u1) at (0,0){};
        \node[fill,circle, label=$u_2$, color=blue] (u2) at (0,-2){};
        \node[fill,circle, label=$v_1$, color=orange] (v1) at (2,0){};
        \node[fill,circle, label=$v_2$, color=blue] (v2) at (2,-2){};
        \node[fill,circle, label=$w'$, color=magenta] (w') at (0,2){};
        \node[fill,circle, label=$w_{1,1}$, color=green] (w1) at (1,0){};
        \node[fill,label=$K_n$,color=magenta] (k1) at (2,2){};
        \draw (v1)--(v2);
        \draw (u2)--(u1)--(w1)--(v1)--(w');
        \draw (w')  to[bend right=45] (u2);
        \draw (u1)--(k1);
        \draw (k1) to[bend left=45] (v2);
    \end{tikzpicture}}
    \caption{A visual proof of why $G_2$ as defined in the proof of Proposition \ref{prop:t-crit} is $2$-critical.}
    \label{fig:2-crit}
\end{figure}

Despite the fact that we cannot obtain a Wilf-like bound tailored specifically for $\chi'_t(G)$ due to Proposition \ref{prop:t-crit}, we can still study whether Corollary \ref{cor:wilf} can tell us anything regarding Conjecture \ref{con:erdos_nesetril}. In particular, we are interested in whether Corollary \ref{cor:wilf} can be used to derive a competitive bound of the form $\chi'_t(G) \leq b_t\Delta^t$. However, for $t=2$, we can show quite easily that the best we can hope to do is $b_2 = 2$, which is significantly worse than the previously derived bound of $b_2 = 1.730$ \cite{Kang2024}. This is because we can fully characterize the behaviour of Wilf's bound for $L(G)^2$ when $G$ is a strongly regular graph.
\begin{lemma}\label{lem:SRG_Wilf}
    Let $G$ be a SRG($n,k,\lambda,\mu)$. Then $L(G)^2$ is regular with valency
    \begin{equation*}
        2k^2+(1+\lambda)(\mu-1)-k(1+\lambda+\mu)
    \end{equation*}
    and Wilf's bound becomes
    \begin{equation*}
        \chi_2'(G) \leq 2k^2+(1+\lambda)(\mu-1)-k(1+\lambda+\mu)+1.
    \end{equation*}
\end{lemma}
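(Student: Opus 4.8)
The plan is to compute the valency of $L(G)^2$ directly by counting, for a fixed edge $e$ of $G$, how many other edges of $G$ lie within distance $2$ of $e$ in $L(G)$. Since $L(G)^2$ is the second power of the line graph, two edges of $G$ are adjacent in $L(G)^2$ precisely when they are within distance $2$ in $L(G)$, i.e.\ when they are within distance $2$ as edges of $G$ in the sense defined in the introduction. So the valency is the number of edges $f \neq e$ of $G$ for which the edge-distance between $e$ and $f$ is at most $2$. First I would fix an edge $e = \{x,y\}$, and partition the edges within distance $2$ of $e$ according to how they meet the closed neighborhoods of $x$ and $y$: these are exactly the edges sharing an endpoint with $e$ (distance $1$), together with the edges joining a neighbor of $x$ or $y$ to a vertex not equal to $x$ or $y$ (distance $2$). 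The SRG parameters then make every count uniform, which is what forces $L(G)^2$ to be regular.

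The key steps, in order, are the following. Using $k$-regularity, the edges incident to $e$ number $2(k-1)$, since $x$ and $y$ each have $k-1$ further incident edges and these are disjoint (as $G$ is simple and $\lambda$ counts common neighbors, not shared edges). Next I would count the edges at edge-distance exactly $2$: these are edges $\{a,b\}$ with $a$ adjacent to $x$ or $y$ but $\{a,b\}$ not incident to $e$. To avoid double counting I would fix an endpoint, say count edges emanating from the neighbors of $x$ (other than $y$) and of $y$ (other than $x$) that leave the set $\{x,y\}$, and then subtract the overcounts caused by common neighbors. The strongly regular structure enters here: a common neighbor of $x$ and $y$ exists $\lambda$ times, and a neighbor of $x$ non-adjacent to $y$ shares $\mu$ common neighbors with $y$, etc. Carefully organizing the inclusion--exclusion over the $\lambda$-neighborhood and the $\mu$-neighborhood, and over edges among the neighbors themselves, should collapse to the stated closed form $2k^2+(1+\lambda)(\mu-1)-k(1+\lambda+\mu)$. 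Once the valency $v$ is established, Lemma~\ref{lem:LG_reg}-style regularity gives that $L(G)^2$ is $v$-regular, so its largest adjacency eigenvalue equals $v$; applying Proposition~\ref{prop:wilf} (Wilf's bound for $L(G)^t$ with $t=2$) immediately yields $\chi_2'(G) \le v+1$.

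The main obstacle I anticipate is the bookkeeping in the distance-$2$ count: getting the inclusion--exclusion exactly right so that edges counted from both $x$'s side and $y$'s side, edges lying between two neighbors of $x$, and edges incident to the $\lambda$ common neighbors are each counted with the correct multiplicity. In particular the term $(1+\lambda)(\mu-1)$ and the cross term $-k(1+\lambda+\mu)$ strongly suggest that one must treat the common neighbors of $x,y$ (there are $\lambda$ of them, plus contributions involving the endpoints themselves) separately from the private neighbors, and then account for edges induced among the neighbor set. A clean way to carry this out is to count ordered walks: the total number of edges within edge-distance $2$ of $e$ can be obtained by summing, over the $2(k-1)$ edges incident to $e$, the number of edges incident to \emph{their} far endpoints, and then subtracting edges counted more than once; the SRG identities $\lambda$ and $\mu$ precisely quantify these overlaps. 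I would verify the final formula on a small example (for instance the Petersen graph, an SRG$(10,3,0,1)$) as a sanity check before asserting regularity, since that confirms both the count and that all edges genuinely have the same number of edges within distance $2$, which is the content needed to invoke regularity and hence Wilf's bound.
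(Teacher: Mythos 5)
Your plan follows the same route as the paper's proof: fix an edge $e=(u,v)$, get the $2(k-1)$ edges at distance $1$ from Lemma \ref{lem:LG_reg}, count the edges at distance exactly $2$ through the neighbours of $u$ and $v$ using $\lambda$ and $\mu$, conclude that $L(G)^2$ is regular, and finish with Proposition \ref{prop:wilf}. The problem sits exactly in the step you flag as the main obstacle and then defer: you promise a full inclusion--exclusion that corrects for \emph{all} double counts, explicitly including ``edges lying between two neighbors of $x$'' --- but carried out honestly, that computation does \emph{not} collapse to the stated closed form. The paper's count enumerates ordered pairs $(w,x)$ with $w\in N(u)\setminus\{v\}$ (resp.\ $w$ a private neighbour of $v$) and applies only the $\mu-1$ correction for edges joining a private neighbour of $v$ to a neighbour of $u$; it never subtracts the edges with both endpoints in $N(u)\setminus\{v\}$ (there are $\lambda(k-2)/2$ of these, each counted twice), nor the edges joining two private neighbours of $v$. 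Hence the stated valency equals the true number of edges within distance $2$ \emph{plus} these overcounts, and is exact essentially only in the triangle-free case $\lambda=0$. A concrete witness: for the $3\times 3$ rook's graph, an SRG$(9,4,1,2)$, the formula gives $2\cdot 16+2\cdot 1-4\cdot 4=18$, yet $L(G)^2$ has only $18$ vertices (the true valency is $16$), so the closed form cannot be an exact degree. Worse for the regularity claim, the fully corrected count depends on the number of edges among the $\lambda$ common neighbours of $u$ and $v$ (equivalently, the number of $K_4$'s through $e$), which is not determined by $(n,k,\lambda,\mu)$; so $L(G)^2$ need not even be regular for a general SRG, and your concluding step ``regularity gives $\tilde\lambda_1 = v$'' cannot be salvaged by more careful bookkeeping.

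Two further remarks. Your proposed sanity check on the Petersen graph would not detect any of this, since Petersen has $\lambda=0$, precisely the case in which the formula is exact; any SRG with $\lambda\geq 1$ would have exposed the discrepancy. And note which part of the lemma survives: since every edge at distance exactly $2$ from $e$ is counted at least once in the ordered count, the closed form is an upper bound on every degree of $L(G)^2$, hence on its largest adjacency eigenvalue, so Proposition \ref{prop:wilf} still yields $\chi_2'(G)\leq 2k^2+(1+\lambda)(\mu-1)-k(1+\lambda+\mu)+1$; this inequality is also all that the subsequent asymptotic application to $E_6(q)$ requires, the neglected overcount being of order $k\lambda = o(k^2)$ there. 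So to reproduce the lemma \emph{as stated} you would have to perform the paper's uncorrected ordered count; your more careful inclusion--exclusion --- which is the mathematically sound route --- instead shows that the exact-valency and regularity claims need an extra hypothesis (e.g.\ $\lambda=0$), or a reformulation of the valency as an upper bound.
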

\begin{proof}
    Consider an arbitrary edge $(u,v) \in E(G)$. We want to know how many vertices are within distance $2$ of $(u,v)$ in $L(G)$. We already know by Lemma \ref{lem:LG_reg} that there are $2(k-1)$ vertices adjacent to $(u,v)$. Now let us count the vertices at exactly distance $2$.

    \begin{itemize}
        \item First, we count the vertices of the form $(w,x)$ where $(u,w) \in E$. There are $k-1$ choices for $w$, and exactly $\lambda$ of these will also be adjacent to $u$. For these $\lambda$ choices of $w$, we have $k-2$ choices of $x$. The remaining $k-1-\lambda$ choices of $w$ have $k-1$ choices of $x$. Hence in total we have
        \begin{equation*}
            (k-1-\lambda)(k-1) + \lambda(k-2),
        \end{equation*}
        vertices at distance $2$ of this type.
        \item Next, we want to count the vertices of the form $(w,x)$ where $(v,w) \in E$. There are again $k-1$ choices for $w$, but $\lambda$ of these were already considered in the previous case. For the remaining $(k-1-\lambda)$ choices of $w$, we have $k-1$ choices of $x$. However, exactly $\mu-1$ of these choices are also adjacent to $v$ (i.e.\ $(x,v) \in E(G)$), so they were already counted in the previous case. Thus we only have an additional
        \begin{equation*}
            (k-1-\lambda)(k-1-(\mu-1)) = (k-1-\lambda)(k-\mu)
        \end{equation*}
        vertices at distance $2$.
    \end{itemize}
    In total, we find 
    \begin{align*}
        &2(k-1) + (k-1-\lambda)(k-1) + \lambda(k-2) + (k-1-\lambda)(k-\mu)\\
        = &2k^2+(1+\lambda)(\mu-1)-k(1+\lambda+\mu).
    \end{align*}
    vertices within distance $2$. Since this holds for arbitrary $(u,v)$, we find $L(G)^2$ is regular and thus its largest eigenvalue is equal to the maximum degree. Hence, Wilf's bound is
    \begin{align*}
   &     \chi_2'(G) \leq 2k^2+(1+\lambda)(\mu-1)-k(1+\lambda+\mu)+1.
 \qedhere   \end{align*}
\end{proof}

As there are a large amount of strongly regular graphs known and documented (see \cite{brouwer_distance_regular_1989} and \cite{brouwer_parameters_nodate}), Lemma \ref{lem:SRG_Wilf} allows us to investigate the behaviour of Wilf's bound on a large class of graphs. In particular, we are able to find an infinite family of strongly regular graphs of Lie type, for which Wilf's bound gets arbitrarily close to $\chi'_2 (G)\leq 2\Delta^2$.

\begin{corollary}
    For every $\varepsilon>0$, there exists a graph $G$ of maximum degree $\Delta$ such that Corollary \ref{cor:wilf} gives a bound worse than
    \begin{equation*}
        \chi'_2(G) \leq (2-\varepsilon)\Delta^2.
    \end{equation*}
\end{corollary}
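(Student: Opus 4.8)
The plan is to exhibit an explicit infinite family of strongly regular graphs whose parameters make the right-hand side of Lemma~\ref{lem:SRG_Wilf} asymptotically twice the square of the maximum degree of $G$. Since Wilf's bound on $L(G)^2$ equals the (regular) valency of $L(G)^2$ plus one by Lemma~\ref{lem:SRG_Wilf}, and since $\Delta = k$ for a $k$-regular graph, it suffices to find SRGs where the dominant term $2k^2$ governs the valency while the correction terms $(1+\lambda)(\mu-1) - k(1+\lambda+\mu)$ are of lower order relative to $k^2$. Concretely, I would seek a family with $\lambda, \mu = o(k)$, so that the valency is $2k^2(1 - o(1))$, forcing Wilf's bound arbitrarily close to $2\Delta^2 = 2k^2$ and hence worse than $(2-\varepsilon)\Delta^2$ for any fixed $\varepsilon$ once $k$ is large enough.

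First I would recall the precise asymptotic target: we want to rewrite the valency from Lemma~\ref{lem:SRG_Wilf} as
\begin{equation*}
    2k^2 + (1+\lambda)(\mu-1) - k(1+\lambda+\mu) = 2\Delta^2\left(1 - \frac{k(1+\lambda+\mu) - (1+\lambda)(\mu-1)}{2k^2}\right),
\end{equation*}
and observe that this beats $(2-\varepsilon)\Delta^2$ precisely when the bracketed correction is smaller than $\varepsilon/2$. Thus the task reduces to producing SRGs in which $(1+\lambda+\mu)/k \to 0$. The natural candidates are the strongly regular graphs of Lie type alluded to in the statement; I would single out a concrete family — for instance a sequence of generalized quadrangle point graphs or the collinearity graphs of suitable geometries — whose standard parameter formulas give $\lambda$ and $\mu$ polynomially smaller than $k$ as the defining parameter (e.g.\ a prime power $q$) grows.

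The key steps, in order, are: (i) fix the family and write down its SRG parameters $(n,k,\lambda,\mu)$ as functions of the defining parameter; (ii) substitute into the valency expression of Lemma~\ref{lem:SRG_Wilf} and extract the leading $2k^2$ behaviour, verifying that the correction terms are $o(k^2)$; (iii) conclude via Lemma~\ref{lem:SRG_Wilf} that Wilf's bound (Proposition~\ref{prop:wilf}) equals this valency plus one, which exceeds $(2-\varepsilon)\Delta^2$ for all sufficiently large members of the family; and (iv) note that since the family is infinite, such a graph exists for every $\varepsilon > 0$. The main obstacle I anticipate is step~(i): one must name an explicit infinite SRG family whose $\lambda$ and $\mu$ grow genuinely slower than $k$, since for many common SRG families (such as the lattice or triangular graphs) the parameters $\lambda,\mu$ are comparable to $k$ and the correction term does \emph{not} vanish — so the crux is selecting a family of Lie type where the geometry forces small intersection numbers relative to the degree, and then confirming the asymptotics are robust enough that no better constant than $2$ can be salvaged from Wilf's bound.
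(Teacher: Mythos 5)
Your proposal takes essentially the same route as the paper: both reduce the claim to Lemma \ref{lem:SRG_Wilf} and then seek a Lie-type strongly regular family with $\lambda,\mu$ of lower order than $k$, so that the valency of $L(G)^2$ is $2k^2(1-o(1))$ and Wilf's bound approaches $2\Delta^2$; the paper instantiates this with the $E_6(q)$ graphs, where $k=O(q^{11})$, $\lambda=O(q^8)$, $\mu=O(q^6)$. Your sufficient condition $(1+\lambda+\mu)/k\to 0$ is correct (since $(1+\lambda)(\mu-1)\geq 0$ only pushes the valency upward), and the generalized quadrangle point graphs you name as candidates do satisfy it — e.g.\ $Q(4,q)$ has $k=q(q+1)$, $\lambda=q-1$, $\mu=q+1$ — so the one step you flag as an obstacle, fixing an explicit family, is readily completed and your sketch is sound.
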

\begin{proof}
    We use the Lie type SRG construction $E_6(q)$ (see \cite[Table 10.8]{brouwer_distance_regular_1989}), which has parameters
    \begin{align*}
        n &= \frac{(q^{12}-1)(q^9-1)}{(q^4-1)(q-1)}, \\
        k &= \frac{q(q^3-1)(q^8-1)}{q-1}, \\
        \lambda &= k - 1 - \frac{q^7(q^5-1)}{q-1}, \\
        \mu &= \frac{(q^3+1)(q^4-1)}{q-1}.
    \end{align*}
    Plugging these values into Lemma \ref{lem:SRG_Wilf} gives us the bound
    \begin{align*}
        B(q) = 2k^2+(1+\lambda)(\mu-1)-k(1+\lambda+\mu).
    \end{align*}

    It is not hard to see that $k = O(q^{11}), \lambda = O(q^8)$ and $\mu = O(q^6)$, hence
    \begin{equation*}
        \lim_{q \rightarrow \infty} \frac{B(q)}{k^2} = 2.
    \end{equation*}

    Thus, as we take $q$ larger, the Wilf bound approaches $2\Delta^2$ for $E_6(q)$.
\end{proof}

Thus, we conclude that Wilf's bound will not help in making progress towards Conjecture \ref{con:erdos_nesetril}. However, we cannot currently rule out that Wilf's bound could be applied to the $t \geq 3$ Erdős-Nešetřil problem.

\subsection{Hoffman-type bound and the Erdős-Nešetřil conjecture}
In Section \ref{sec:spec_chiprimet} we derived a spectral Hoffman-type lower bound on $\chi_2'(G)$ (Corollary \ref{cor:ratio_chiprime_2}). In theory, if there is a graph $G$ for which the bound from Corollary \ref{cor:ratio_chiprime_2} gives a lower bound larger than $\frac{5\Delta^2}{4}$, this would disprove Conjecture \ref{con:erdos_nesetril}. In this section, we show that, if such a graph were to exist, then it (or its line graph) must have a large subinterval $I \subset [\theta_d,\theta_0]$ such that none of the eigenvalues are contained in $I$. In particular, the length of $I$ (i.e. the distance between $I$'s suprememum and infimum) must be of order $O(\sqrt{\Delta})$ or in some cases even $O(\Delta)$.

\begin{proposition}\label{prop:hoffman_EN_regular}
    Let $G$ be a $k$-regular graph, $k \geq 2$, for which the Hoffman-type bound (Corollary \ref{cor:ratio_chiprime_2_reg}) gives a lower bound larger than $ck^2$ for some $c>1$. Then $G$ has no eigenvalues in the interval
    \begin{align*}
        &\left(-1,-2+\frac{4\sqrt{8c^2-6c}}{4c-1}\right) &\text{ if } k=2, \\
        &\left(-k+1,\frac{ck-k}{ck-1}\right) &\text{ if } k \geq 3.
    \end{align*}
    In particular, for $k \geq 3$, there is an interval $I \subset [-k,k]$ of size $\abs{I} \geq k$ which contains no eigenvalues of $G$.
\end{proposition}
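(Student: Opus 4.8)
The plan is to start from the hypothesis that the Hoffman-type bound (Corollary \ref{cor:ratio_chiprime_2_reg}) exceeds $ck^2$ and unfold what this forces on the spectrum of $G$. By Corollary \ref{cor:ratio_chiprime_2_reg}, the bound is governed by the quantity $\frac{(\theta'_0-\theta'_i)(\theta'_0-\theta'_{i-1})}{\theta'_0+\theta'_i\theta'_{i-1}}$ computed on $L(G)$, where $\theta'_i$ is the largest line-graph eigenvalue that is at most $-1$. Since $G$ is $k$-regular, Corollary \ref{cor:LG_spec} and Lemma \ref{lem:LG_reg} give a clean dictionary: $\theta'_0 = 2k-2$ and every other nontrivial eigenvalue of $L(G)$ is of the form $\theta_j + k - 2$ for an eigenvalue $\theta_j$ of $G$ (plus the eigenvalue $-2$ with high multiplicity, which sits below $-1$). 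So the first step is to rewrite the whole bound purely in terms of the eigenvalues $\theta_j$ of $G$ itself. In particular the condition $\theta'_i \leq -1$ translates to $\theta_i \leq -k+1$, which is why the intervals in the statement are anchored at $-k+1$ (or at $-1$ when $k=2$, since then $-k+1=-1$).

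Next I would treat the bound as a function of the two relevant eigenvalues straddling the gap and impose the inequality ``bound $> ck^2$.'' Concretely, writing $a = \theta'_{i-1}$ (the eigenvalue just above $-1$) and $b = \theta'_i$ (just below $-1$), the hypothesis is $(\theta'_0-b)(\theta'_0-a) > ck^2\,(\theta'_0 + ab)$ with $\theta'_0 = 2k-2$. Because $a$ sits just above the cutoff, the sharpest obstruction to having eigenvalues near $-1$ comes from pushing $a$ as close to the relevant boundary as possible; the idea is to solve this inequality for the location of $a$ (equivalently $\theta_{i-1}$) while using the extremal/worst-case value of $b$ (namely $b$ at the boundary $-1$, or for $k=2$ at $-2$, reflecting that $-2$ is the least eigenvalue of a line graph). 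This produces an explicit upper bound on how large $\theta_{i-1}$ can be, i.e. an eigenvalue-free interval whose right endpoint is the claimed $\frac{ck-k}{ck-1}$ for $k\geq 3$ and whose right endpoint is $-2+\frac{4\sqrt{8c^2-6c}}{4c-1}$ for $k=2$. The appearance of a square root in the $k=2$ case is the tell-tale sign that there one must solve a genuine quadratic (because both $a$ and $b$ vary in the cycle/line-graph spectrum), whereas for $k\geq 3$ the $-2$ eigenvalue decouples the two endpoints and the inequality becomes linear in the free eigenvalue, yielding a rational endpoint.

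For the final ``in particular'' clause, once the eigenvalue-free interval $\left(-k+1,\frac{ck-k}{ck-1}\right)$ is established for $k\geq 3$, I would simply estimate its length. Its right endpoint $\frac{ck-k}{ck-1}$ is strictly less than $1$, and its left endpoint is $-k+1$, so the length exceeds $(-k+1$ to $1) = k-2$; a slightly more careful estimate using $c>1$ should recover the claimed $|I|\geq k$. Since all eigenvalues of $G$ lie in $[-k,k]$, this interval $I$ sits inside $[-k,k]$ as asserted. The main obstacle I anticipate is the $k=2$ computation: there the line-graph eigenvalue $-2$ is no longer separated from the cutoff in the same way (for cycles the spectrum is dense near $-2$ and $-1$), both straddling eigenvalues genuinely move, and one is forced into the quadratic whose positive root is the messy radical in the statement. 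Getting the algebra right — correctly identifying which root is relevant and verifying the sign conditions that make $\theta'_0 + ab > 0$ so that the bound is meaningful — will require care, and I would handle the $k=2$ and $k\geq 3$ regimes entirely separately rather than trying to unify them.
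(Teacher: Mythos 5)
Your skeleton (pass to $L(G)$ via Lemma \ref{lem:LG_reg} and Corollary \ref{cor:LG_spec}, impose the inequality on the Hoffman ratio, extract an eigenvalue-free interval, then shift by $k-2$) is indeed the paper's skeleton, but two of your steps fail as described. First, your extremal-case bookkeeping is backwards. For $k\geq 3$ the binding value is $\theta'_i=-2$, not $-1$: plugging $\theta'_i=-1$ makes the ratio collapse to $\frac{(2k-1)(2k-2-\theta'_{i-1})}{2k-2-\theta'_{i-1}}=2k-1<ck^2$, contradicting the hypothesis (and, taken literally, would produce an unjustifiably strong interval), whereas $\theta'_i=-2$ yields $\frac{(k-1)(ck-2)}{ck-1}$, which after the shift is exactly the stated endpoint $\frac{ck-k}{ck-1}$. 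For $k=2$, fixing $\theta'_i=-2$ gives the \emph{rational} endpoint $\frac{2c-2}{2c-1}$, not the stated radical; the radical arises from a mechanism your ``solve a genuine quadratic'' sentence does not identify. The paper solves $B\geq ck^2$ for $\theta'_{i-1}$, forms the gap function $d(\theta'_i):=$ (that lower bound) $-\ \theta'_i$, minimizes $d$ over $\theta'_i\in[-2,-1]$ (the equation $d'=0$ is the quadratic, and its root $\mu$ is interior to the domain precisely when $k=2$, while for $k\geq 3$ the minimum sits at the boundary $-2$), and then anchors via $\theta'_{i-1}\geq -2+\min d$. Your $k=2$ route with $\theta'_i=-2$ fixed is actually valid and gives a \emph{stronger} interval than the proposition, but then you would still owe the unstated verification that $-2+\frac{4\sqrt{8c^2-6c}}{4c-1}\leq\frac{2c-2}{2c-1}$ in order to recover the claim as written.

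Second, and more seriously, your plan for the ``in particular'' clause cannot work. The interval $\left(-k+1,\frac{ck-k}{ck-1}\right)$ has length $k-1+\frac{k(c-1)}{ck-1}$, and $\frac{k(c-1)}{ck-1}<1$ for every $c>1$ and $k>1$ (the inequality reduces to $k>1$), so this interval has length \emph{strictly less than} $k$ for all admissible $c$; no ``slightly more careful estimate'' can recover $\abs{I}\geq k$ from it. The size-$\geq k$ interval is a different, floating one: the actual spectral gap. The proof's computation shows $\theta'_{i-1}-\theta'_i\geq\frac{k(ck+c-2)}{ck-1}$ for all feasible $\theta'_i$, and $\frac{k(ck+c-2)}{ck-1}\geq k$ is equivalent to $c\geq 1$; shifting the eigenvalue-free interval $(\theta'_i,\theta'_{i-1})$ of $L(G)$ down by $k-2$ gives an eigenvalue-free interval of $G$ of length $\geq k$, and it lies inside $[-k,k]$ because $\theta'_i\geq -2$ and $\theta'_{i-1}\leq 2k-2$. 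Missing this distinction between the fixed interval anchored at $-k+1$ and the floating gap interval is a genuine gap in the proposal, not a constant to be sharpened.
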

\begin{proof}        
    For a $k$-regular graph $G$ the bound from Corollary \ref{cor:ratio_chiprime_2_reg} has the form
    \begin{equation*}
        B(\theta_{i-1}') =\frac{(2k-2-\theta'_i)(2k-2-\theta'_{i-1})}{2k-2+\theta'_i\theta'_{i-1}}.
    \end{equation*}

    By solving $B(\theta_{i-1}') \geq ck^2$ we obtain the inequality
    \begin{equation*}
        \theta'_{i-1} \geq \frac{(2k-2)(ck^2+\theta'_i-2k+2)}{-\theta'_i(ck^2-1)-2k+2},
    \end{equation*}
    and hence
    \begin{equation*}
        \theta'_{i-1}-\theta'_i \geq \frac{(2k-2)(ck^2+\theta'_i-2k+2)}{-\theta'_i(ck^2-1)-2k+2} - \theta'_i=:d(\theta'_i).
    \end{equation*}
    By finding the solutions $d'(\theta_i')=0$ and using that since $L(G)$ is a line graph, all its eigenvalues are larger than $-2$ (by e.g. the equivalence with the PSD signless Laplacian as seen in \cite[Proposition 1.4.1]{brouwer_spectra_2012}), we can find that  $d(\theta'_i)$ has its minimum at
    \begin{equation*}
        \mu := \max\left\{-2,\frac{-(2k-2)- \sqrt{2ck^2(k-1)(1+k(ck-2))}}{ck^2-1}\right\}.
    \end{equation*} 
    For $k=2$, it becomes     
    $$\mu = \frac{-(2k-2)- \sqrt{2ck^2(k-1)(1+k(ck-2))}}{ck^2-1}.$$ 
    Hence, we obtain the bound
    \begin{equation*}
        \theta'_{i-1}-\theta'_i \geq \frac{2\sqrt{2ck^2(k-1)(1+k(ck-2))}}{ck^2-1} = \frac{4\sqrt{8c^2-6c}}{4c-1}.
    \end{equation*}
    Once again using that $\theta_i \geq -2$, we find that $L(G)$ must have no eigenvalues in the interval
    \begin{equation*}
        \left(-1,\frac{4\sqrt{8c^2-6c}}{4c-1}-2\right).
    \end{equation*}
    
    For $k \geq 3$, $\mu=-2$, and we obtain the bound
    \begin{equation*}
        \theta'_{i-1}-\theta'_i \geq \frac{k(ck+c-2)}{ck-1}.
    \end{equation*}
    In this case, $L(G)$ must have no eigenvalues in the interval
    \begin{equation*}
        \left(-1,\frac{k(ck+c-2)}{ck-1}-2\right).
    \end{equation*}
    Finally, using Corollary \ref{cor:LG_spec} we can translate these conditions on the spectrum of $L(G)$ into conditions on the spectrum of $G$. In particular, $G$ must have no eigenvalues in the range
    \begin{align*}
        &\left(-1,-2+\frac{4\sqrt{8c^2-6c}}{4c-1}\right) &\text{ if } k=2, \\
        &\left(-k+1,\frac{ck-k}{ck-1}\right) &\text{ if } k \geq 3. &\qedhere
    \end{align*}
\end{proof}


We can also derive a version of Proposition \ref{prop:hoffman_EN_regular} which holds for general graphs, where now the line graph must have a large gap in its spectrum.

\begin{proposition}\label{prop:hoffman_EN}
    Let $G$ be a graph with maximum degree $\Delta \geq 2$ for which the Hoffman-type bound (Corollary \ref{cor:ratio_chiprime_2}) gives a lower bound larger than $c\Delta^2$ for some $c > 1$. Then $L(G)$ has no eigenvalues in the interval
    \begin{align*}
        &\left(-1,\frac{2\sqrt{c\Delta^3(c\Delta^2-(c+1)\Delta+1}}{c\Delta^2-1}-2\right) &\text{ if } \Delta < 5, \\
        &\left(-1,\frac{c\Delta^3-(c+1)\Delta^2+1}{2c\Delta^2-\Delta-1}\right) &\text{ if } \Delta \geq 5.
    \end{align*}
\end{proposition}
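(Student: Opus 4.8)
The plan is to follow the proof of Proposition \ref{prop:hoffman_EN_regular} almost verbatim, replacing the regular Hoffman bound by its general-graph version, Corollary \ref{cor:ratio_chiprime_2}. Writing $s := c\Delta^2$ and starting from the hypothesis
$$\frac{(\theta'_0 - \theta'_i)(\theta'_0 - \theta'_{i-1})}{\Delta' + \theta'_i\theta'_{i-1}} \ge c\Delta^2,$$
I would clear denominators and solve for $\theta'_{i-1}$. Because $\theta'_i \le -1$, the coefficient of $\theta'_{i-1}$ turns out to be negative, so the inequality rearranges into a lower bound $\theta'_{i-1} \ge \beta(\theta'_0,\Delta',\theta'_i)$ for an explicit rational function $\beta$ whose denominator is $\theta'_0 + (s-1)\theta'_i < 0$. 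This is the exact analogue of the first display in the proof of Proposition \ref{prop:hoffman_EN_regular}, the essential difference being that $\theta'_0$ and $\Delta'$ are now independent quantities instead of both being equal to $2\Delta-2$.

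Identifying the worst case of the pair $(\theta'_0,\Delta')$ is the step I expect to be the main obstacle, and it is where the argument genuinely departs from the regular case. I would use three facts about $L(G)$: since $G$ has a vertex of degree $\Delta$, its $\Delta$ incident edges induce a clique $K_\Delta$ in $L(G)$, so interlacing gives $\theta'_0 \ge \Delta-1$ and each clique vertex has degree at least $\Delta-1$, giving $\Delta' \ge \Delta-1$; and the spectral radius is at most the maximum degree, so $\Delta' \ge \theta'_0$. A direct computation shows $\partial \beta/\partial \Delta' = -s/(\theta'_0+(s-1)\theta'_i) > 0$, so over the feasible region $\Delta-1 \le \theta'_0 \le \Delta' \le 2\Delta-2$ the minimum of $\beta$ is attained on the diagonal $\Delta' = \theta'_0$; checking that $\beta$ is increasing in $\theta'_0$ along this diagonal then forces the worst case $\theta'_0 = \Delta' = \Delta-1$, i.e.\ the extremal configuration in which the relevant part of $L(G)$ is the clique $K_\Delta$. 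This is precisely the substitution that reproduces the stated formulas.

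With $\theta'_0 = \Delta' = \Delta-1$ substituted, the remainder is the regular template. I would form the gap function $d(\theta'_i) := \beta(\theta'_i) - \theta'_i$ and minimize it over $\theta'_i \in [-2,-1]$, using $\theta'_i \ge -2$ (line-graph eigenvalues) and $\theta'_i \le -\Delta'/\theta'_0 \le -1$. Solving $d'(\theta'_i)=0$ yields the critical point
$$\mu = \frac{-(\Delta-1) - \sqrt{c\Delta^3(\Delta-1)(c\Delta-1)}}{c\Delta^2-1},$$
and deciding whether $\mu \ge -2$ produces the dichotomy: the interior minimum at $\mu$ occurs for $\Delta \le 4$, giving the square-root expression after simplifying $(\Delta-1)(c\Delta-1) = c\Delta^2-(c+1)\Delta+1$, while for $\Delta \ge 5$ one has $\mu < -2$ and the minimum is the boundary value $d(-2)$. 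Finally, from $\theta'_{i-1} \ge \theta'_i + d(\theta'_i) \ge -2 + \min_{\theta'_i} d(\theta'_i)$ together with $\theta'_i \le -1$, the open interval $(-1,\theta'_{i-1})$ is free of eigenvalues of $L(G)$, which gives the claimed interval with right endpoint $(\min_{\theta'_i} d) - 2$ in each case.
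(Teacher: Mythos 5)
Your overall template is the right one: the paper in fact omits its own proof of Proposition \ref{prop:hoffman_EN}, saying only that it parallels Proposition \ref{prop:hoffman_EN_regular}, and your reconstruction of that parallel is largely sound. Writing $s := c\Delta^2$, the rearrangement of Corollary \ref{cor:ratio_chiprime_2} into $\theta'_{i-1} \geq \beta(\theta'_0,\Delta',\theta'_i)$ with negative denominator $\theta'_0 + (s-1)\theta'_i$ is correct, $\partial\beta/\partial\Delta' = -s/(\theta'_0+(s-1)\theta'_i) > 0$ is correct, and after the substitution $\theta'_0 = \Delta' = \Delta-1$ your critical point $\mu$, the minimum value $2\sqrt{c\Delta^3(\Delta-1)(c\Delta-1)}/(c\Delta^2-1)$, the boundary value at $\theta'_i=-2$, and the dichotomy at $\Delta=5$ (one checks $c\Delta^3(\Delta-1)(c\Delta-1)$ versus $(2c\Delta^2-\Delta-1)^2$, which changes sign between $\Delta=4$ and $\Delta=5$ for all $c>1$) all reproduce the stated formulas exactly, using $(\Delta-1)(c\Delta-1)=c\Delta^2-(c+1)\Delta+1$.

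However, the step you flag as the main obstacle contains a genuine error: on the diagonal $\Delta'=\theta'_0=a$, the function $\beta(a,a,\theta'_i)=\frac{a(a-\theta'_i-s)}{a+(s-1)\theta'_i}$ is \emph{not} increasing in $a$ throughout the feasible region, so the worst case $\theta'_0=\Delta'=\Delta-1$ does not follow from your pointwise monotonicity check. The sign of $\partial\beta/\partial a$ is that of $a^2+(s-1)\theta'_i(2a-s-\theta'_i)$, which can be negative: for $\Delta=2$, $c=1.1$ (so $s=4.4$) and $\theta'_i=-2$ one gets $4-3.4\cdot 2\cdot 1.6=-6.88<0$, and indeed $\beta(1,1,-2)=\tfrac{-1.4}{-5.8}\approx 0.241$ while $\beta(2,2,-2)=\tfrac{-0.8}{-4.8}\approx 0.167$, with $(a,\theta'_i)=(2,-2)$ inside your region $\Delta-1\le a\le 2\Delta-2$, $\theta'_i\in[-2,-1]$. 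The reduction can be repaired, but only by comparing the bounds \emph{after} minimizing over $\theta'_i$ rather than pointwise: the quantity $-2+\frac{2\sqrt{as(s-1-a)}}{s-1}$ is a valid lower bound on $\theta'_{i-1}$ in all cases, and $a\mapsto as(s-1-a)$ is concave with its minimum over $[\Delta-1,2\Delta-2]$ at $a=\Delta-1$ precisely because $s\ge 3\Delta-2$ (true for $c>1$, $\Delta\ge 2$), which settles $\Delta<5$; for $\Delta\ge 5$ one must additionally verify that the boundary regime $\mu<-2$ holds for every $a$ in the range (the interior condition's left side increases and right side decreases in $a$ there) and that $\frac{a(s-2-a)}{2(s-1)-a}$ is increasing in $a$ on $[\Delta-1,2\Delta-2]$. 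This extra case analysis, absent from your proposal, is presumably exactly the ``much more cumbersome'' part the paper alludes to; without it the reduction to the clique configuration $K_\Delta$ is unjustified.
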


We omit the proof of Proposition \ref{prop:hoffman_EN}, as it is similar to that of Proposition \ref{prop:hoffman_EN_regular}, only much more cumbersome.

A similar analysis can likely be performed for the Hoffman-type bound on $\chi_3'(G)$ (Corollary \ref{cor:ratio_chiprime_3}), albeit more technical.
\subsection*{Acknowledgements}
Aida Abiad is supported by NWO (Dutch Research Council) through the grants VI.Vidi.213.085 and OCENW.KLEIN.475. Harper Reijnders is supported by NWO through the grant VI.Vidi.213.085. The authors thank Hitesh Kumar for a careful reading of the manuscript, as well as for his help with the proof of Proposition \ref{prop:t-crit}.


\newpage

\section*{Appendix}\label{appendix}

\begin{table}[H]
    \centering
    \tiny
    \begin{tabular}{l|cc|c|c|l}
    \hline
    Graph & Corollary \ref{cor:ratio_chiprime_2} / \ref{cor:ratio_chiprime_2_reg} & Theorem \ref{thm:1st_inertial_chiprime} &  Corollary \ref{cor:wilf} & $\chi'_2(G)$ & Polynomial for Theorem \ref{thm:1st_inertial_chiprime} \\
    \hline
        Balaban 10-cage & $5$ & $5$ & $13$ & $6$ & $x^2 + 0.55x$ \\
        \textbf{Balaban 11-cage} & \textbf{6} & $4$ & $13$ & $6$ & $x^2 + 0.53x$ \\
        Bidiakis cube & $6$ & $5$ & $12$ & $8$ & $x^2 + 3x$ \\
        \textbf{Biggs-Smith graph} & \textbf{6} & \textbf{6} & $13$ & $6$ & $x^2 + 0.12x$ \\
        Blanusa First Snark Graph & $6$ & $4$ & $13$ & $7$ & $x^2 + 0.64x$ \\
        Blanusa Second Snark Graph & $6$ & $4$ & $13$ & $7$ & $x^2 + 0.2x$ \\
        Brinkmann graph & $9$ & $5$ & $25$ & $10$ & $x^2 + 0.01x$ \\
        Brouwer-Haemers & $68$ & time & $633$ & time & N/A \\
        \textbf{Bucky Ball} & \textbf{5} & $4$ & $13$ & $5$ & $x^2 + 0.38x$ \\
        Cell 600 & $43$ & time & $160$ & time & N/A \\
        Chvatal graph & $8$ & $4$ & $22$ & $12$ & $x^2$ \\
        \textbf{Clebsch graph} & \textbf{10} & $4$ & $37$ & $10$ & $x^2 -2x$ \\
        Coclique graph of H-S graph & $50$ & time & $365$ & time & N/A \\
        Conway-Smith graph for 3S7 & $29$ & $12$ & $127$ & time & $x^2 -6.99x$ \\
        Coxeter Graph & $6$ & $5$ & $13$ & $7$ & $x^2$ \\
        \textbf{Desargues Graph} & \textbf{5} & $3$ & $13$ & $5$ & $x^2$ \\
        Dejter Graph & $12$ & $6$ & $57$ & time & $x^2 -x$ \\
        \textbf{Dodecahedron} & \textbf{5} & $4$ & $13$ & $5$ & $x^2$ \\
        Double star snark & $5$ & $5$ & $13$ & $7$ & $x^2 + 0.24x$ \\
        \textbf{Durer graph} & \textbf{6} & $4$ & $11$ & $6$ & $x^2 + x$ \\
        \textbf{Dyck graph} & \textbf{6} & $3$ & $13$ & $6$ & $x^2$ \\
        \textbf{Ellingham-Horton 54-graph} & \textbf{6} & $4$ & $13$ & $6$ & $x^2 + 0.49x$ \\
        \textbf{Ellingham-Horton 78-graph} & \textbf{6} & $4$ & $13$ & $6$ & $x^2 + 1.52x$ \\
        Errera graph & $11$ & $9$ & $30$ & $15$ & $x^2 -1.62x$ \\
        F26A Graph & $6$ & $6$ & $13$ & $7$ & $x^2 + 0.17x$ \\
        Flower Snark & $5$ & $4$ & $13$ & $6$ & $x^2 + 0.85x$ \\
        Folkman Graph & $8$ & $8$ & $22$ & $10$ & $x^2$ \\
        Foster Graph & $5$ & $4$ & $13$ & $6$ & $x^2$ \\
        Foster graph for 3.Sym(6) graph & $14$ & $7$ & $51$ & $15$ & $x^2$ \\
        \textbf{Franklin graph} & \textbf{6} & $3$ & $12$ & $6$ & $x^2$ \\
        \textbf{Frucht graph} & \textbf{6} & $5$ & $10$ & $6$ & $x^2 + 1.11x$ \\
        Goldner-Harary graph & $10$ & $7$ & $26$ & $24$ & $x^2 -0.87x$ \\
        Golomb graph & $7$ & $5$ & $15$ & $11$ & $x^2 + 0.85x$ \\
        Gosset Graph & $252$ & $95$ & $643$ & time & $x^2 -22x$ \\
        \textbf{Gray graph} & \textbf{6} & $5$ & $13$ & $6$ & $x^2 + x$ \\
        Grotzsch graph & $7$ & $5$ & $19$ & $10$ & $x^2 -0.34x$ \\
        Harborth Graph & $8$ & $7$ & $18$ & $9$ & $x^2 -0.91x$ \\
        Harries Graph & $6$ & $5$ & $13$ & time & $x^2 + 0.55x$ \\
        \textbf{Harries-Wong graph} & \textbf{6} & $5$ & $13$ & $6$ & $x^2 + 0.55x$ \\
        \textbf{Heawood graph} & \textbf{7} & $4$ & $13$ & $7$ & $x^2$ \\
        Herschel graph & $6$ & $5$ & $14$ & $9$ & $x^2 + 1.88x$ \\
        \textbf{Hexahedron} & \textbf{6} & $3$ & $11$ & $6$ & $x^2 + 9996x$ \\
        Hoffman Graph & $8$ & $7$ & $22$ & $12$ & $x^2 + x$ \\
        Hoffman-Singleton graph & $18$ & $9$ & $85$ & time & $x^2 -4x$ \\
        Holt graph & $8$ & $5$ & $25$ & $9$ & $x^2 + 0.22x$ \\
        \textbf{Horton Graph} & \textbf{6} & $4$ & $13$ & $6$ & $x^2 + 1.53x$ \\
        \textbf{Icosahedron} & \textbf{15} & $8$ & $25$ & $15$ & $x^2$ \\
        Klein 3-regular Graph & $6$ & $4$ & $13$ & $7$ & $x^2 + x$ \\
        \textbf{Klein 7-regular Graph} & \textbf{21} & $10$ & $59$ & $21$ & $x^2 -4x$ \\
        Krackhardt Kite Graph & $7$ & $3$ & $15$ & $14$ & $x^2 + 2.7x$ \\
        \textbf{Ljubljana graph} & \textbf{6} & $5$ & $13$ & $6$ & $x^2$ \\
        M22 Graph & $52$ & time & $436$ & time & N/A \\
        \textbf{Markstroem Graph} & \textbf{6} & \textbf{6} & $10$ & $6$ & $x^2$ \\
        McGee graph & $6$ & $4$ & $13$ & $7$ & $x^2 + 3x$ \\
        Meredith Graph & $7$ & $7$ & $20$ & $13$ & $x^2 -0.67x$ \\
        \textbf{Moebius-Kantor Graph} & \textbf{6} & $3$ & $13$ & $6$ & $x^2$ \\
        Moser spindle & $6$ & $4$ & $10$ & $9$ & $x^2 + 8985.35x$ \\
        Murty Graph & $6$ & $7$ & $11$ & $10$ & $x^2 + 0.38x$ \\
        \textbf{Nauru Graph} & \textbf{6} & $4$ & $13$ & $6$ & $x^2$ \\
        \textbf{Pappus Graph} & \textbf{6} & $4$ & $13$ & $6$ & $x^2$ \\
        Perkel Graph & $15$ & $9$ & $61$ & time & $x^2 -3x$ \\
        \textbf{Petersen graph} & \textbf{5} & $3$ & $13$ & $5$ & $x^2 -3x$ \\
        Poussin Graph & $10$ & $8$ & $28$ & $17$ & $x^2 -1.21x$ \\
        Robertson Graph & $8$ & $4$ & $25$ & $10$ & $x^2 -0.62x$ \\
        Schläfli graph & $108$ & $31$ & $206$ & time & $x^2 -10x$ \\
        \textbf{Shrikhande graph} & \textbf{16} & $7$ & $40$ & $16$ & $x^2 -3x$ \\
        Sims-Gewirtz Graph & $28$ & $14$ & $172$ & time & $x^2 -8x$ \\
        Sousselier Graph & $6$ & $4$ & $18$ & $7$ & $x^2 + 0.37x$ \\
        \textbf{Sylvester Graph} & \textbf{10} & $6$ & $41$ & $10$ & $x^2$ \\
        \textbf{Szekeres Snark Graph} & \textbf{5} & $4$ & $13$ & $5$ & $x^2 + 0.45x$ \\
        Tietze Graph & $6$ & $3$ & $12$ & $7$ & $x^2$ \\
        Tricorn Graph & $5$ & $4$ & $10$ & $7$ & $x^2 + 1.35x$ \\
        \textbf{Truncated Tetrahedron} & \textbf{6} & $5$ & $10$ & $6$ & $x^2$ \\
        \textbf{Tutte 12-Cage} & \textbf{6} & $4$ & $13$ & $6$ & $x^2 + x$ \\
        Tutte-Coxeter graph & $5$ & $5$ & $13$ & $7$ & $x^2 + 3x$ \\
        Twinplex Graph & $6$ & $3$ & $13$ & $7$ & $x^2 + x$ \\
        Wagner Graph & $6$ & $4$ & $11$ & $10$ & $x^2 + 1.41x$ \\
        \textbf{Wells graph} & \textbf{10} & $5$ & $41$ & $10$ & $x^2 + 9.69x$ \\
        Wiener-Araya Graph & $5$ & $5$ & $15$ & $8$ & $x^2 + 0.06x$ \\
    \hline
    \end{tabular}
    \caption{Comparison of $\chi'_2(G)$ bounds for Sage named graphs. Tight bounds are in bold.}
    \label{tab:bound_sim_2}
\end{table}

\begin{table}[H]
    \centering
    \tiny
    \begin{tabular}{l|cc|c|c|l}
    \hline
    Graph & Corollary \ref{cor:ratio_chiprime_3} / \cite[LP (4)]{ANR2024} & Theorem \ref{thm:1st_inertial_chiprime}  & Corollary \ref{cor:wilf} & $\chi'_3(G)$  & Polynomial for Theorem \ref{thm:1st_inertial_chiprime} \\
    \hline
    Balaban 10-cage & $10$ & $7$ & $29$ & time & $x^3 + 1.04x^2 -5.43x$ \\
    Balaban 11-cage & $10$ & $8$ & $29$ & time & $x^3 + 0.63x^2 -5.04x$ \\
    Bidiakis cube & $9$ & $5$ & $17$ & $11$ & $x^3 + x^2 -6x$ \\
    Biggs-Smith graph & $9$ & $6$ & $29$ & time & $x^3 + 0.21x^2 -3.07x$ \\
    \textbf{Blanusa First Snark Graph} & \textbf{14} & $7$ & $22$ & $14$ & $x^3 + 1.06x^2 -6x$ \\
    \textbf{Blanusa Second Snark Graph} & \textbf{14} & $7$ & $21$ & $14$ & $x^3 + 1.38x^2 -4x$ \\
    Bucky Ball & $10$ & $6$ & $25$ & $12$ & $x^3 + 0.04x^2 -5.12x$ \\
    Cell 600 & $120$ & time & $407$ & time & N/A \\
    Coclique graph of H-S graph & $250$ & time & $729$ & time & N/A \\
    Conway-Smith graph for 3S7 & $105$ & $25$ & $301$ & time & $x^3 + 1308.33x^2 -9225.33x$ \\
    \textbf{Coxeter Graph} & \textbf{11} & $5$ & $29$ & $11$ & $x^3 + 0.83x^2 -6x$ \\
    \textbf{Desargues Graph} & \textbf{15} & $6$ & $25$ & $15$ & $x^3 + 0.33x^2 -6x$ \\
    Dejter Graph & $42$ & $16$ & $160$ & time & $x^3 + 4.92x^2 -38.72x$ \\
    \textbf{Dodecahedron} & \textbf{10} & $8$ & $21$ & $10$ & $x^3 + 1.24x^2 -4x$ \\
    Double star snark & $9$ & $5$ & $27$ & $15$ & $x^3 + 1.82x^2 -3.15x$ \\
    Durer graph & $9$ & $9$ & $16$ & $12$ & $x^3 + 0.82x^2 -5.34x$ \\
    \textbf{Dyck graph} & \textbf{12} & $7$ & $27$ & $12$ & $x^3 -x^2 -2x$ \\
    Ellingham-Horton 54-graph & $9$ & $6$ & $26$ & $12$ & $x^3 + 1.27x^2 -5.97x$ \\
    Ellingham-Horton 78-graph & $9$ & $6$ & $26$ & $12$ & $x^3 + 1.02x^2 -5.89x$ \\
    Errera graph & $17$ & $12$ & $44$ & $35$ & $x^3 -4.79x^2 -8.41x$ \\
    \textbf{F26A Graph} & \textbf{13} & $6$ & $27$ & $13$ & $x^3 -1.25x^2 -6x$ \\
    Flower Snark & $10$ & $6$ & $25$ & $15$ & $x^3 -0.28x^2 -5.06x$ \\
    Foster Graph & $9$ & $7$ & $29$ & time & $x^3 + 1.45x^2 -4x$ \\
    \textbf{Foster graph for 3.Sym(6) graph} & \textbf{45} & $11$ & $129$ & $45$ & $x^3 -5.5x^2 -8.5x$ \\
    Frucht graph & $9$ & $6$ & $16$ & $10$ & $x^3 + 0.74x^2 -6x$ \\
    \textbf{Gray graph} & \textbf{9} & $5$ & $29$ & $9$ & $x^3 -2.45x^2 -5.45x$ \\
    Harborth Graph & $12$ & $9$ & $34$ & $17$ & $x^3 -1.17x^2 -8.82x$ \\
    Harries Graph & $11$ & $7$ & $29$ & time & $x^3 + 1.04x^2 -5.43x$ \\
    Harries-Wong graph & $11$ & $7$ & $29$ & time & $x^3 + 1.04x^2 -5.43x$ \\
    Holt graph & $18$ & $8$ & $49$ & time & $x^3 + 2493x^2 -2500x$ \\
    Horton Graph & $9$ & $6$ & $26$ & $12$ & $x^3 + 0.45x^2 -5.52x$ \\
    Klein 3-regular Graph & $10$ & $6$ & $29$ & time & $x^3 + 0.33x^2 -6x$ \\
    \textbf{Krackhardt Kite Graph} & $9$ & $5$ & \textbf{17} & $17$ & $x^3 + 0.47x^2 -1.79x$ \\
    Ljubljana graph & $10$ & $6$ & $29$ & time & $x^3 + 0.29x^2 -5.29x$ \\
    Markstroem Graph & $8$ & $6$ & $18$ & $10$ & $x^3 + 0.57x^2 -6x$ \\
    McGee graph & $9$ & $4$ & $29$ & $13$ & $x^3 -1.89x^2 -6x$ \\
    Meredith Graph & $18$ & $14$ & $34$ & time & $x^3 + 0.5x^2 -7.46x$ \\
    \textbf{Moebius-Kantor Graph} & \textbf{12} & $5$ & $23$ & $12$ & $x^3 -6x$ \\
    \textbf{Nauru Graph} & \textbf{9} & $6$ & $27$ & $9$ & $x^3 + 0.33x^2 -6x$ \\
    \textbf{Pappus Graph} & \textbf{9} & $4$ & $25$ & $9$ & $x^3 -2.33x^2 -6x$ \\
    Perkel Graph & $43$ & $10$ & $161$ & time & $x^3 -5.25x^2 -8.25x$ \\
    Poussin Graph & $16$ & $10$ & $38$ & $37$ & $x^3 -4x^2 -8.98x$ \\
    \textbf{Robertson Graph} & $19$ & $19$ & \textbf{37} & $37$ & $x^3 -1.28x^2 -8.72x$ \\
    Sousselier Graph & $8$ & $6$ & $26$ & $24$ & $x^3 + 4.61x^2 -5.99x$ \\
    \textbf{Sylvester Graph} & \textbf{45} & $9$ & $89$ & $45$ & $x^3 -5x^2 -4x$ \\
    Szekeres Snark Graph & $10$ & $9$ & $23$ & $13$ & $x^3 + 0.1x^2 -4.94x$ \\
    \textbf{Truncated Tetrahedron} & \textbf{9} & $5$ & $17$ & $9$ & $x^3 -4.29x^2 -7x$ \\
    Tutte 12-Cage & $9$ & $9$ & $29$ & time & $x^3 + 1.04x^2 -5.43x$ \\
    Tutte-Coxeter graph & $9$ & $5$ & $29$ & $11$ & $x^3 -x^2 -6x$ \\
    \textbf{Twinplex Graph} & $9$ & $9$ & \textbf{17} & $17$ & $x^3 + 0.15x^2 -4.62x$ \\
    \textbf{Wells graph} & \textbf{40} & $9$ & $79$ & $40$ & $x^3 + 1990x^2 -3992x$ \\
    \end{tabular}
    \caption{Comparison of $\chi'_3(G)$ bounds for Sage named graphs. Tight bounds are in bold.}
    \label{tab:bound_sim_3}
\end{table}

\end{document}